\newcommand{\be}{\begin{equation}}
\newcommand{\ee}{\end{equation}}
\newtheorem{lm}{Lemma}[section]
\newtheorem{teo}[lm]{Theorem}
\newtheorem*{theoA}{Theorem A}
\newtheorem*{theoB}{Theorem B}
\newtheorem{coro}[lm]{Corollary}
\newtheorem{prop}[lm]{Proposition}
\theoremstyle{definition}
\newtheorem{oss}[lm]{Remark}
\newtheorem*{ack}{Acknowledgements}
\numberwithin{equation}{section}
\title{Improved energy bounds for Schr\"odinger operators}
\author[Brasco]{Lorenzo Brasco}
\address{Aix-Marseille Universit\'e, CNRS, Centrale Marseille, I2M, UMR 7373, 13453 Marseille, France}
\email{lorenzo.brasco@univ-amu.fr}
\author[Buttazzo]{Giuseppe Buttazzo}
\address{Dipartimento di Matematica, Universit\`a di Pisa, Largo B. Pontecorvo 5, 56126 Pisa, ITALY}
\email{buttazzo@dm.unipi.it}
\keywords{Schr\"odinger operators; optimal potentials; stability inequalities; decay estimates}
\subjclass[2010]{49K20, 49Q10, 49Q20}
\date{14th July 2014, Marseille et Pise}
\begin{document}

\maketitle

\begin{abstract}
Given a potential $V$ and the associated Schr\"odinger operator $-\Delta+V$, we consider the problem of providing sharp upper and lower bound on the energy of the operator. It is known that if for example $V$ or $V^{-1}$ enjoys suitable summability properties, the problem has a positive answer. In this paper we show that the corresponding isoperimetric-like inequalities can be improved by means of quantitative stability estimates.
\end{abstract}

\tableofcontents


\section{Introduction}

\subsection{Aim of the paper}
Let $\Omega\subset\mathbb{R}^N$ be an open set not necessarily with finite measure (it could be $\Omega=\mathbb{R}^N$) and $V:\Omega\to\mathbb{R}$ be a potential. We consider the associated Schr\"odinger operator $-\Delta+V$ defined on the homogeneous Sobolev space $W^{1,2}_0(\Omega)$. The latter is the closure of $C^\infty_0(\Omega)$ with respect to the norm
\[
\|u\|_{W^{1,2}_0(\Omega)}:=\left(\int_\Omega|\nabla u|^2\,dx\right)^{1/2}.
\]
We also denote by $W^{-1,2}(\Omega)$ its dual space and by $\langle\cdot,\cdot\rangle$ the duality pairing between $W^{1,2}_0(\Omega)$ and $W^{-1,2}(\Omega)$.
In this paper we are concerned with the following problem: given a source term $f\in W^{-1,2}(\Omega)$, find lower and upper bounds on the {\it energy} of the relevant Schr\"odinger operator, i.e. 
\[
\mathcal{E}_f(V)=-\frac{1}{2}\, \int_\Omega |\nabla u_V|^2\, dx-\frac{1}{2}\, \int_\Omega V\, u^2_V\, dx.
\]
Here the state function $u_V$ is a $W^{1,2}_0(\Omega)$ solution of
\[
-\Delta u+V\, u=f,\qquad \mbox{ in }\Omega.
\]
In the recent paper \cite{BGRV}, this problem has been solved for summable potentials or for {\it confining potentials}, i.e. for potentials blowing-up at infinity such that $1/V$ enjoys some summability properties. For example, the harmonic--like potential
\[
V=\left(\delta^2+|x|^2\right)^{\gamma/2},\qquad \delta>0,
\] 
belongs to this class, for suitable $\gamma>0$. In order to provide a deeper insight into the scopes of this work, it is useful to briefly recall some of the results in \cite{BGRV}. In that paper it has been shown that $\mathcal{E}_f(V)$ can be universally bounded {\it from above} in the class (see \cite[Proposition 5.1]{BGRV})
\[
\mathcal{V}_1=\left\{V\, :\, \int_\Omega |V|^p\le 1\right\},
\]
and {\it from below} in the class (see \cite[Proposition 5.4]{BGRV})
\[
\mathcal{V}_2=\left\{V\ge 0\, :\, \int_\Omega V^{-p}\le 1\right\}.
\]
The value $1$ above plays no special role and can be replaced by any constant $c>0$. Indeed, one can show that there exist two potentials $V_0$ and $U_0$ such that
\[
\int_\Omega |V_0|^p\, dx=1=
\int_\Omega |U_0|^{-p}\, dx,
\]
and
\begin{equation}
\label{isop_above}
\mathcal{E}_f(V)\le \mathcal{E}_f(V_0),\qquad \mbox{ for every } V\in\mathcal{V}_1,
\end{equation}
\begin{equation}
\label{isop_below}
\mathcal{E}_f(V)\ge \mathcal{E}_f(U_0),\qquad \mbox{ for every } V\in\mathcal{V}_2.
\end{equation}
In both the estimates \eqref{isop_above} and \eqref{isop_below}, the equality sign holds if and only if $V=V_0$ and $V=U_0$, respectively.
Moreover, the extremal potentials $V_0$ and $U_0$ can be characterized in terms of the solutions $v_0$ and $u_0$ of the semilinear non-autonomous PDEs
\[
-\Delta v_0+c\, v_0^{(p+1)/(p-1)}=f\qquad \mbox{ and }\qquad -\Delta u_0+c\, u_0^{(p-1)/(p+1)}=f,
\]
through the relations
\[
V_0=\left(\int_\Omega|v_0|^{2\,p/(p-1)}\,dx\right)^{-1/p} |v_0|^{2/(p-1)}\qquad \mbox{ and }\qquad U_0=\left(\int_\Omega|u_0|^{2p/(p+1)}\,dx\right)^{1/p}|u_{0}|^{-2/(p+1)}.
\]
For completeness, we point out that a special class of potentials from the sets $\mathcal{V}_1$ and $\mathcal{V}_2$ are given respectively by
\begin{equation}
\label{insiemi}
V(x)=\begin{cases}
|E|^{-1/p},& x\in E,\\
0,& \mbox{otherwise}
\end{cases}\qquad \mbox{ and }\qquad V(x)=\begin{cases}
|E|^{1/p},& x\in E,\\
+\infty,& \mbox{otherwise}, 
\end{cases}
\end{equation}
where $E\subset\Omega$ is an open set and $|E|$ denotes its $N-$dimensional Lebesgue measure. If we suppose for simplicity that $|E|=1$, the corresponding operators are given by
\[
W^{1,2}_0(\Omega)\ni u\mapsto -\Delta u+u\cdot 1_E\qquad \mbox{ and }\qquad W^{1,2}_0(E) \ni u\mapsto -\Delta u+u.
\] 
Thus from \eqref{isop_above} and \eqref{isop_below} we get
\[
\min_{u\in W^{1,2}_0(\Omega)}\left[\frac{1}{2}\, \int_\Omega |\nabla u|^2\,dx +\frac{1}{2}\, \int_E |u|^2\, dx-\langle f,u\rangle\right]<\mathcal{E}_f(V_0),
\]
and
\[
\min_{u\in W^{1,2}_0(E)}\left[\frac{1}{2}\, \int_E |\nabla u|^2\,dx +\frac{1}{2}\, \int_E |u|^2\, dx-\langle f,u\rangle\right]>\mathcal{E}_f(U_0),
\]
for every $E\subset\Omega$ with $|E|=1$. Observe that for the second problem the set $\Omega$ simply acts as a {\it design region} where the admissible domains $E$ have to be contained.
\vskip.2cm
The problem of finding sharp bounds on energetical quantities linked to a Schr\"odinger operator is quite classical, with many studies devoted to the {\it ground state energy} or {\it first eigenvalue}
\[
\lambda_1(V)=\min_{u\in W^{1,2}_0(\Omega)}\left\{\int_{\Omega} |\nabla u|^2\, dx+\int_\Omega V\, u^2\, dx\, :\, \|u\|_{L^2(\Omega)}=1 \right\}.
\]
For example, the pioneering paper \cite{Ke} by Keller considers the problem of finding sharp lower bounds on $\lambda_1(V)$ in the class $\mathcal{V}_1$, in the case of space dimension $N=1$ and $\Omega=\mathbb{R}$. Related problems have been considered by Ashbaugh and Harrell in \cite{AH} in higher dimensions. There is a vast literature on the subject, considering optimal bounds for other spectral quantities, like the {\it first excited state} or {\it second eigenvalue} $\lambda_2(V)$ and the {\it fundamental gap} $\lambda_2(V)-\lambda_1(V)$. We also mention the recent paper \cite{BBV}, where the case of successive eigenvalues $\lambda_k(V)$ for $k\ge 2$ is considered, for the non-compact case of $\Omega=\mathbb{R}^N$.
Actually this kind of problems is even older: indeed, we observe that for potentials of the second form in \eqref{insiemi}, we have
\[
\lambda_1(V)=\min_{u\in W^{1,2}_0(E)}\left\{\int_{E} |\nabla u|^2\, dx+\int_E |u|^2\, dx :\, \|u\|_{L^2(E)}=1 \right\}=\lambda_1(E)+1,
\]
where $\lambda_1(E)$ now stands for the first eigenvalue of $-\Delta$ with Dirichlet boundary conditions on $\partial E$. Thus the celebrated {\it Faber-Krahn inequality} (see \cite[Chapter 3]{hen06})
\[
\lambda_1(E)\ge \lambda_1(B),\qquad \mbox{ with }B \mbox{ any ball such that } |B|=|E|=1, 
\]
can be seen as a particular instance of these problems. A more general overview on optimization problems of spectral type can be found in \cite{Bu} and \cite{hen06}, to which we refer the interested reader. We wish to point out that on the contrary the case of the energy $\mathcal{E}_f(V)$ appears to be less investigated.

\subsection{Main results}

In this paper, we improve the previous sharp bounds \eqref{isop_above} and \eqref{isop_below} on the energy of a Schr\"odinger operator, by means of a quantitative stability result. In other words, we will prove that the energy gap $\mathcal{E}_f(V_0)-\mathcal{E}_f(V)$ or $\mathcal{E}_f(V)-\mathcal{E}_f(U_0)$ controls the deviation from optimality of a potential $V$. Thus it is possible to add a reminder term in the right-hand side of \eqref{isop_above} and \eqref{isop_below}, which measures the distance of a generic potential $V$ from $V_0$ or $U_0$. The relevant results are summarized in the following couple of theorems, which represent the main results of the paper. We refer the reader to Sections \ref{sec:stabmax} and \ref{sec:stabmin} for the precise statements.
\begin{theoA}[Stability of the maximizer]
Let $1<p<\infty$. There exists a constant $\sigma_1>0$ such that for every $V\in\mathcal{V}_1$ we have
\[
\left\{\begin{array}{lc}\mathcal{E}_f(V)\le \mathcal{E}_f(V_0)-\sigma_1\, \|V-V_0\|^2_{L^p(\Omega)},& \mbox{ if } p\ge 2,\\
&\\
\mathcal{E}_f(V)\le \mathcal{E}_f(V_0)-\sigma_1\, \Big\||V|^{p-2}\, V-|V_0|^{p-2}\, V_0\Big\|^2_{L^{p'}(\Omega)},& \mbox{ if } 1<p<2.
\end{array}
\right.
\]
\end{theoA}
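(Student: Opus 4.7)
My plan rests on three ingredients: an exact decomposition of the energy gap $\mathcal{E}_f(V_0)-\mathcal{E}_f(V)$, the Euler--Lagrange optimality of $V_0$, and a quantitative Clarkson/strong-convexity inequality for $t\mapsto|t|^p$. Plugging $u_0$ into the variational principle defining $\mathcal{E}_f(V)$ and exploiting the PDE $-\Delta\eta+V\eta=(V_0-V)u_0$ for $\eta:=u_V-u_0$ yields the identity
\[
\mathcal{E}_f(V_0)-\mathcal{E}_f(V)=\tfrac{1}{2}\int_\Omega(V_0-V)\,u_0^2\,dx+\tfrac{1}{2}\int_\Omega\bigl(|\nabla\eta|^2+V\,\eta^2\bigr)\,dx,
\]
with both summands non-negative when $V\ge 0$ (automatic for the maximizer, and assumable in general). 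The Lagrange-multiplier condition yields $u_0^2=c_0\,|V_0|^{p-2}V_0$, turning the first integral into $c_0\bigl(\|V_0\|_{L^p}^p-\int V\,|V_0|^{p-2}V_0\bigr)$.

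For $p\ge 2$ I would apply the pointwise strong-convexity inequality $|b|^p\ge |a|^p+p\,|a|^{p-2}a(b-a)+c_p|b-a|^p$ with $a=V_0$, $b=V$, and integrate; combined with $\|V\|_{L^p}^p\le 1=\|V_0\|_{L^p}^p$ this gives $\int(V_0-V)u_0^2\,dx\ge C\,\|V-V_0\|_{L^p}^p$. Since this has the \emph{wrong} exponent $p$ instead of $2$, I upgrade via a case split: when $\|V-V_0\|_{L^p}\ge\delta$ for a fixed small $\delta$, the elementary inequality $x^p\ge \delta^{p-2}x^2$ (valid on the a priori range $x\in[0,2]$) already converts the $p$-th-power bound into the claimed squared bound; when $\|V-V_0\|_{L^p}<\delta$, I rely on the second, genuinely quadratic term in the decomposition, showing $\int(|\nabla\eta|^2+V\eta^2)\,dx\ge c\,\|V-V_0\|_{L^p}^2$ via a resolvent estimate for $-\Delta+V$ applied to the source $(V-V_0)u_0$.

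The subcritical case $1<p<2$ is handled by duality: the strong-convexity inequality on $|t|^p/p$ degenerates in the $L^p$-sense, but its dual version on $|s|^{p'}/p'$ (now with $p'>2$) applies to the nonlinear duality map $V\mapsto |V|^{p-2}V$, producing precisely the stability norm $\bigl\||V|^{p-2}V-|V_0|^{p-2}V_0\bigr\|_{L^{p'}}^2$ appearing in the theorem. The main technical obstacle in both regimes is the small-deviation estimate — quantifying the coercivity of the Hessian of $-\mathcal{E}_f$ on the tangent space to $\partial\mathcal{V}_1$ at $V_0$, which reduces to a lower bound on the $H^{-1}_V$-norm of $(V-V_0)u_0$ in terms of $\|V-V_0\|_{L^p}$; non-degeneracy of $u_0$, i.e.\ $f\not\equiv 0$, is what ultimately makes this possible.
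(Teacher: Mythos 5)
Your exact decomposition of the energy gap is correct: since the functional defining $\mathcal{E}_f(V)$ is quadratic and $u_V$ is its minimizer, testing with $v_0$ gives
\[
\mathcal{E}_f(V_0)-\mathcal{E}_f(V)=\frac{1}{2}\int_\Omega (V_0-V)\,v_0^2\,dx+\frac{1}{2}\int_\Omega\bigl(|\nabla\eta|^2+V\eta^2\bigr)\,dx,\qquad \eta=u_V-v_0,
\]
with both terms non-negative. Moreover the relation $v_0^2=c_1^2\,V_0^{p-1}$ turns the first term into $\tfrac{c_1^2}{2}\bigl[1-\int_\Omega V\,V_0^{p-1}\bigr]$, which coincides with \eqref{tutto} in the paper. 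Up to here you are on the same track as the paper. The trouble starts when you try to extract a \emph{squared} $L^p$-remainder.

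The pointwise strong-convexity inequality for $|t|^p$ with $p\geq 2$ yields only $1-\int_\Omega V\,V_0^{p-1}\geq c_p\|V-V_0\|_{L^p(\Omega)}^p$, and the exponent $p$ is intrinsic to that inequality — it is exactly what the paper gets in \eqref{stab'}. Your case split does not repair this. The regime $\|V-V_0\|_{L^p(\Omega)}\geq\delta$ is fine (and is also how the paper handles it, see \eqref{unmezzo}), but in the complementary regime your claimed bound $\int_\Omega(|\nabla\eta|^2+V\eta^2)\,dx\gtrsim\|V-V_0\|_{L^p(\Omega)}^2$ is false: that quantity equals $\|(V_0-V)v_0\|^2_{H^{-1}_V}$, a weak negative-Sobolev-type norm. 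Perturbing $V_0$ by $\varepsilon\psi_n$ with $\psi_n$ rapidly oscillating (and renormalized to stay in $\mathcal{V}_1$) produces $\|(V_0-V)v_0\|_{H^{-1}}\sim\varepsilon/n$ while $\|V-V_0\|_{L^p(\Omega)}\sim\varepsilon$, so the second term is of order $\varepsilon^2/n^2$ and cannot dominate $\varepsilon^2$. Concentrating $\psi_n$ on a region where $v_0$ is small kills the first term as well. Your $1<p<2$ paragraph has the same flaw, since the dual convexity inequality for $|s|^{p'}$ again produces the power $p'$, not $2$. The self-identified ``main technical obstacle'' is therefore a genuine gap that cannot be bridged by the second term.

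What actually closes the argument in the paper is the \emph{quantitative H\"older inequality} of Lemma \ref{quahol} (from \cite{CFL}), which is a functional, not pointwise, strengthening: after splitting $1-\int V V_0^{p-1}=(1-\|V\|_{L^p})+(\|V\|_{L^p}-\int V V_0^{p-1})$, the second bracket is a H\"older deficit whose quantitative version has a built-in \emph{squared} remainder in the nonlinear dual variable $|V|^{p-2}V$, and the normalization corrections are absorbed by reusing \eqref{tutto}. Note also that the statement you are attacking (Theorem A) has its two cases interchanged relative to the precise Theorem \ref{teo:max_stab} — for $p\ge 2$ the paper proves a quadratic bound on $\||V|^{p-2}V-V_0^{p-1}\|_{L^{p'}(\Omega)}$ and only a $p$-th power bound on $\|V-V_0\|_{L^p(\Omega)}$, and the counterexamples above indicate that this is essentially optimal. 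So in addition to the gap in your proof, the target as stated is not the one the paper actually establishes.
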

In the case of inequality \eqref{isop_below} we need to distinguish between the case $|\Omega|<+\infty$ and $|\Omega|=+\infty$.
\begin{theoB}[Stability of the minimizer]
Let $\Omega\subset\mathbb{R}^N$ be an open set such that $|\Omega|=+\infty$. Let $1<p<\infty$, $r>N/2$, and let $f\in L^{r}(\Omega)$ be a function decaying to $0$ at infinity as $O(|x|^{-\alpha})$ with $\alpha>1+N/2$. Then there exist a constant $\sigma_2>0$ and an exponent $\beta=\beta(p)>2$ such that for every $V\in\mathcal{V}_2$ we have
\[
\mathcal{E}_f(V)\ge \mathcal{E}_f(U_0)+\sigma_2\, \left\|\frac{1}{V}-\frac{1}{U_0}\right\|^\beta_{L^p(\Omega)}.
\]
If $|\Omega|<+\infty$, the same result holds for every $f\in W^{-1,2}(\Omega)$ without any additional hypothesis.
\end{theoB}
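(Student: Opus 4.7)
The natural strategy is a dual (min-min) reformulation. For fixed $u\in W^{1,2}_0(\Omega)$ a direct Lagrange-multiplier computation --- equivalently, a sharp H\"older inequality applied to $V^{-1}$ --- yields
\[
\min\Bigl\{\tfrac12\!\int_{\Omega}\!Vu^2\,dx\,:\,V\ge 0,\ \int_{\Omega}\!V^{-p}\le 1\Bigr\}=\tfrac12\,\|u\|_{L^q(\Omega)}^{2},\qquad q:=\tfrac{2p}{p+1},
\]
attained at the unique $V_u:=\|u\|_{L^q}^{q/p}|u|^{-2/(p+1)}$. Swapping the two infima reduces the original problem to minimizing
\[
F(u):=\tfrac12\!\int_{\Omega}\!|\nabla u|^2\,dx+\tfrac12\,\|u\|_{L^q(\Omega)}^{2}-\langle f,u\rangle
\]
over $W^{1,2}_0(\Omega)$, with unique minimizer $u_0$ satisfying $U_0=V_{u_0}$; this recovers the characterization of $U_0$ in the introduction. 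Setting $\tilde V:=V_{u_V}$, I would decompose the gap as
\[
\mathcal{E}_f(V)-\mathcal{E}_f(U_0)=\underbrace{\tfrac12\!\int_{\Omega}\!(V-\tilde V)u_V^{2}\,dx}_{A\,\ge\,0}\;+\;\underbrace{[F(u_V)-F(u_0)]}_{B\,\ge\,0},
\]
so that, by the triangle inequality $\|V^{-1}-U_0^{-1}\|_{L^p}\le \|V^{-1}-\tilde V^{-1}\|_{L^p}+\|\tilde V^{-1}-U_0^{-1}\|_{L^p}$, it suffices to estimate $A$ and $B$ separately.

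The term $A$ is the defect in the sharp H\"older inequality above, and a quantitative version based on the uniform convexity of $L^p$ (Clarkson-type inequalities on $W=V^{-1}$) will provide a lower bound $A\ge c_1\,\|V^{-1}-\tilde V^{-1}\|_{L^p}^{\beta_1}$ for some $\beta_1=\beta_1(p)\ge 2$, parallel to the stability result in Theorem A. The term $B$ quantifies the deviation of $u_V$ from the minimizer $u_0$ of the strictly convex functional $F$. Since the Dirichlet part of $F$ is strongly convex while $\|\cdot\|_{L^q}^{2}$ is merely convex, a Taylor expansion at $u_0$ yields at least
\[
B\ge \tfrac12\!\int_{\Omega}\!|\nabla(u_V-u_0)|^2\,dx.
\]

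The final and most delicate step is to translate this Sobolev control into an $L^p$ estimate on $\tilde V^{-1}-U_0^{-1}=\|u_V\|_{L^q}^{-q/p}|u_V|^{2/(p+1)}-\|u_0\|_{L^q}^{-q/p}|u_0|^{2/(p+1)}$. Using that $t\mapsto|t|^{2/(p+1)}$ is H\"older continuous, together with Sobolev embedding, this reduces to pointwise information on $u_0$. \emph{This step is the main obstacle when $|\Omega|=+\infty$:} one must ensure $|u_0|^{2/(p+1)}\in L^p(\Omega)$ with an $L^p$-tail that can be compared to $|u_V-u_0|$ globally. The assumption $f\in L^r(\Omega)$ with $r>N/2$ gives $u_0\in L^\infty$ via Moser iteration, while the polynomial decay $|f(x)|=O(|x|^{-\alpha})$ with $\alpha>1+N/2$ provides the matching polynomial decay for $u_0$ through a barrier argument using the confining potential $U_0$. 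These inputs produce $\|\tilde V^{-1}-U_0^{-1}\|_{L^p}\le C\,B^{\theta/2}$ for some $\theta\in(0,1]$, and combined with the estimate for $A$ and the triangle inequality yields
\[
\mathcal{E}_f(V)-\mathcal{E}_f(U_0)=A+B\,\gtrsim\,\|V^{-1}-U_0^{-1}\|_{L^p}^{\beta}
\]
for some $\beta=\beta(p)>2$ (the exponent being strictly larger than $2$ because the translation exponent $\theta$ is strictly less than $1$ in general). When $|\Omega|<\infty$ all tail concerns disappear, so the same scheme applies to arbitrary $f\in W^{-1,2}(\Omega)$.
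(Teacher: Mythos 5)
Your plan coincides with the paper's own proof of Theorem~\ref{teo:min_stab}: the decomposition $\mathcal{E}_f(V)-\mathcal{E}_f(U_0)=\frac12\mathcal{I}_1+\frac12\mathcal{I}_2$ into the H\"older defect and the functional gap is exactly \eqref{fundamental}, the quantitative H\"older inequality \eqref{holderquant2} and the strong convexity bound \eqref{cazzatina} handle $\mathcal{I}_1$ and $\mathcal{I}_2$ as you describe, and the translation back to $L^p$ goes through H\"older continuity of $t\mapsto|t|^{2/(p+1)}$, Lemma~\ref{lm:sospetto}, Clarkson's inequality, and the decay estimate for $u_0$ from the Appendix. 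The only imprecisions worth flagging: the integrability actually required of $u_0$ in the translation step is not $|u_0|^{2/(p+1)}\in L^p(\Omega)$ (automatic from the constraint) but $|u_0|^{(p-1)/(p+1)}\in L^{(2^*)'}(\Omega)$, i.e.\ condition \eqref{ipotesiu0}, which is what makes Lemma~\ref{lm:sospetto} applicable; and one also needs the preliminary Reduction Lemma~\ref{lm:reduction} to normalize $\int_\Omega U^{-p}\,dx=1$ before invoking the quantitative H\"older inequality.
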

Stability results of this type have attracted an increasing interest in recent years. As a non-exhaustive list of works on the subject, we point out for example \cite{CicLeo} and \cite{FMP} dealing with the classical isoperimetric inequality, the papers \cite{BraDePVel,BraDepRuf,BraPra,HN} and \cite{Me} concerning sharp bounds for eigenvalues of the Laplacian and \cite{BE,CF,CFMP} about quantitative versions of the Sobolev and Gagliardo-Nirenberg inequalities with sharp constant.
\par
Among these papers, the recent one \cite{CFL} is very much related with the subject here considered. 
In \cite{CFL} a quantitative stability estimate for $\lambda_1(V)$ is proved, for potentials belonging to the class (here $r>N/2$)
\[
\mathcal{V}'=\left\{V\, :\, \int_{\mathbb{R}^N} |V_-|^{r}\le 1\right\},
\]
where $V_-$ is the negative part of $V$. In this case $\lambda_1(V)$ admits a sharp lower bound, corresponding to the negative potential
\[
W_0=-\left(\int_{\mathbb{R}^N} |w_0|^{2\,r/(r-1)}\right)^{-1/r}\,|w_0|^{2\,r/(r-1)},
\]
where $w_0$ is an extremal of the Gagliardo-Nirenberg inequality
\[
\left(\int_{\mathbb{R}^N} |u|^{2\,r/(r-1)}\, dx\right)^{(r-1)/r}\le C\, \left(\int_{\mathbb{R}^N} |\nabla u|^2\, dx\right)^{1-\vartheta}\,\left(\int_{\mathbb{R}^N} |u|^2\, dx\right)^\vartheta.
\]
The parameter $0<\vartheta=\vartheta(N,r)<1$ above is uniquely determined by scaling invariance.

\subsection{Plan of the paper} Along all the paper, for the sake of simplicity, we assume $N\ge 3$. In this way the Sobolev exponent $2^*$ is finite; all the results also apply, with minor modifications, to the cases $N=1$ and $N=2$ by using the corresponding Sobolev embeddings.
\par
We start with Section \ref{sec:prel} where we fix the main notations and prove some basic results which will be used throughout the whole paper. Then Section \ref{sec:max} is concerned with maximization problems for $\mathcal{E}_f$, under a constraint on the $L^p$ norm of the admissible potentials. The relevant quantitative stability result (Theorem \ref{teo:max_stab}) is then considered in Section \ref{sec:stabmax}. Minimization problems are adressed in Section \ref{sec:min}, while the last section of the paper contains the corresponding stability result (Theorem \ref{teo:min_stab}). Finally, a self-contained Appendix on sharp decay estimates for finite energy solutions of
\[
-\Delta u+c\, u^{q-1}=f,\qquad \mbox{ for } c>0,\quad 1<q<2,
\]
concludes the paper (Theorem \ref{teo:u0}).

\section{Preliminaries}\label{sec:prel}

In the paper we mainly focus on the following three model cases:
\begin{itemize}
\item $\Omega=\mathbb{R}^N$;
\vskip.2cm
\item $\Omega=\omega\times\mathbb{R}$, with $\omega\subset\mathbb{R}^{N-1}$ open set with finite Lebesgue measure ({\it waveguide});
\vskip.2cm
\item $\Omega\subset\mathbb{R}^N$ with finite Lebesgue measure ({\it compact case}).
\end{itemize}
For $N\ge 3$, we define
\[
2^*=\frac{2\,N}{N-2}.
\]
The following embedding properties of $W^{1,2}_0(\Omega)$ are well known.
\begin{prop}\label{prop:embed}
Let $N\ge3$; then
\begin{enumerate}
\item[i)] if $\Omega=\mathbb{R}^N$, we have the continuous embedding $W^{1,2}_0(\Omega)\hookrightarrow L^{2^*}(\Omega)$, but $W^{1,2}_0(\Omega)\not\subset L^s(\Omega)$ for $s\not =2^*$;
\vskip.2cm
\item[ii)]if $\Omega=\omega\times\mathbb{R}$ is a waveguide, we have the continuous embedding $W^{1,2}_0(\Omega)\hookrightarrow L^s(\Omega)$ for every $2\le s\le 2^*$;
\vskip.2cm
\item[iii)]if $|\Omega|<+\infty$, we have the continuous embedding $W^{1,2}_0(\Omega)\hookrightarrow L^s(\Omega)$ for every $0<s\le 2^*$. Moreover, this is compact for $0<s<2^*$.
\end{enumerate} 
\end{prop}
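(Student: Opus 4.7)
This proposition collects three classical facts; my approach would be to assemble standard ingredients rather than re-derive them from scratch.

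For (i), the positive statement $W^{1,2}_0(\mathbb{R}^N)\hookrightarrow L^{2^*}(\mathbb{R}^N)$ is the Gagliardo--Nirenberg--Sobolev inequality, which I would simply cite. The negative statement is best proved by a scaling argument: if the embedding into $L^s(\mathbb{R}^N)$ held, then applying the corresponding inequality $\|u\|_{L^s}\le C\,\|\nabla u\|_{L^2}$ to the rescaled function $u_\lambda(x)=u(x/\lambda)$ (which still lies in $C^\infty_0$) gives
\[
\lambda^{N/s}\,\|u\|_{L^s}\le C\,\lambda^{(N-2)/2}\,\|\nabla u\|_{L^2},\qquad \lambda>0.
\]
Letting $\lambda\to 0^+$ and $\lambda\to+\infty$ forces the two exponents to match, so $s=2^*$ is the only admissible value.

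For (ii), the point is that the cross-section $\omega$ is bounded. For $u\in C^\infty_0(\omega\times\mathbb{R})$ and a.e.\ $t\in\mathbb{R}$, I would apply Poincar\'e on $\omega$ to the slice $x'\mapsto u(x',t)$ and then integrate in $t$, obtaining $\|u\|_{L^2(\Omega)}\le C(\omega)\,\|\nabla u\|_{L^2(\Omega)}$. This gives the continuous inclusion into $L^2(\Omega)$. Combined with the inclusion into $L^{2^*}(\Omega)$ (which follows from the Sobolev embedding on $\mathbb{R}^N$ via extension by zero) and the interpolation inequality $\|u\|_{L^s}\le \|u\|_{L^2}^{\theta}\,\|u\|_{L^{2^*}}^{1-\theta}$ with $\theta\in[0,1]$ determined by $s$, one concludes that $W^{1,2}_0(\Omega)\hookrightarrow L^s(\Omega)$ for every $2\le s\le 2^*$.

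For (iii), extending by zero reduces $W^{1,2}_0(\Omega)\hookrightarrow L^{2^*}(\Omega)$ to the Sobolev inequality on $\mathbb{R}^N$; H\"older's inequality, which is available because $|\Omega|<+\infty$, then yields the inclusion into every $L^s(\Omega)$ with $0<s\le 2^*$. The compactness of the embedding into $L^s(\Omega)$ for $0<s<2^*$ is the Rellich--Kondrachov theorem, which I would cite. No step poses a real obstacle; the only point requiring a little care is guaranteeing admissibility of the rescaled test functions in the scaling argument for (i), which is immediate from $u_\lambda\in C^\infty_0$.
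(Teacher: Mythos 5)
The paper gives no proof of this proposition: it is presented as a collection of well-known embedding facts, so there is no argument in the text to compare against. Your proposal is a correct and sensible assembly of the standard ingredients, and each of the three parts is handled the right way. Two small points are worth making explicit. First, in (i) the statement to be proved is $W^{1,2}_0(\mathbb{R}^N)\not\subset L^s(\mathbb{R}^N)$ (a set-theoretic non-inclusion), whereas your scaling argument directly rules out the existence of a \emph{bounded} inequality $\|u\|_{L^s}\le C\|\nabla u\|_{L^2}$; to pass from the one to the other you should invoke the closed graph theorem (if the inclusion held it would automatically be continuous, since both spaces are complete), or alternatively exhibit explicit functions $u\in\dot W^{1,2}(\mathbb{R}^N)$ with a singularity (for $s>2^*$) or slow decay at infinity (for $s<2^*$) that fail to be in $L^s$. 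Second, in (iii) the hypothesis is $|\Omega|<+\infty$, not that $\Omega$ is bounded; the Rellich--Kondrachov theorem as usually stated applies to bounded domains, so a word is needed for the unbounded finite-measure case. The extension is routine: for a bounded sequence in $W^{1,2}_0(\Omega)$ one has a uniform $L^{2^*}$ bound, and by H\"older
\[
\int_{\Omega\setminus B_R}|u_n|^s\,dx\le \|u_n\|_{L^{2^*}(\Omega)}^{s}\,|\Omega\setminus B_R|^{1-s/2^*}\longrightarrow 0
\]
uniformly in $n$ as $R\to\infty$, so compactness on $\Omega\cap B_R$ (which is a bounded open set) plus a diagonal argument gives compactness on all of $\Omega$. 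With these two clarifications your proof is complete.
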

In what follows, for $N\ge 3$ we set
\begin{equation}
\label{talenti}
T_N:=\inf_{u\in W^{1,2}_0(\mathbb{R}^N)}\left\{\int_{\mathbb{R}^N}|\nabla v|^2\,dx\ :\ \|v\|_{L^{2^*}(\mathbb{R}^N)}=1\right\}<+\infty.
\end{equation}
This infimum is finite by Proposition \ref{prop:embed} and attained on $W^{1,2}_0(\mathbb{R}^N)$, see for example \cite{Ta}. 
\vskip.2cm\noindent
Let $f\in W^{-1,2}(\Omega)$; for every potential $V$ belonging to the admissible class
\[
\mathcal{V}=\left\{V:\Omega \to (-\infty,+\infty]\,:\, V \mbox{ Borel measurable},\, \|V_-\|_{L^{N/2}(\Omega)}<T_N\right\},
\]
we define its energy by
\begin{equation}
\label{problema}
\mathcal{E}_f(V)=\min_{u\in W^{1,2}_0(\Omega)}\frac{1}{2}\int_\Omega |\nabla u|^2\,dx+\frac{1}{2}\int_\Omega Vu^2\,dx-\langle f,u\rangle.
\end{equation}
\begin{prop}
For every $V\in\mathcal{V}$ the minimization problem \eqref{problema} admits a solution $u_V\in W^{1,2}_0(\Omega)$. Moreover the energy inequality
\begin{equation}
\label{energyestimate}
\|u_V\|_{W^{1,2}_0(\Omega)}\le \frac{T_N}{T_N-\|V_-\|_{L^{N/2}(\Omega)}}\,\, \|f\|_{W^{-1,2}(\Omega)}. 
\end{equation}
holds.
\end{prop}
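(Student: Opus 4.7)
The natural approach is the direct method of the calculus of variations. The crucial technical ingredient is that the constraint $\|V_-\|_{L^{N/2}(\Omega)}<T_N$ is exactly sharp to dominate the negative part of the potential term by the Dirichlet energy: H\"older's inequality together with \eqref{talenti} gives
\[
\int_\Omega V_-\,u^2\,dx \;\le\; \|V_-\|_{L^{N/2}(\Omega)}\,\|u\|_{L^{2^*}(\Omega)}^2 \;\le\; c\,\int_\Omega|\nabla u|^2\,dx,\qquad c:=\frac{\|V_-\|_{L^{N/2}(\Omega)}}{T_N}<1.
\]
Consequently the quadratic form $Q(u):=\tfrac12\int|\nabla u|^2\,dx+\tfrac12\int V\,u^2\,dx$ satisfies $Q(u)\ge \tfrac{1-c}{2}\,\|\nabla u\|_{L^2}^2\ge 0$ on all admissible $u$, so $Q$ is positive semidefinite (hence convex), and $\mathcal J(u):=Q(u)-\langle f,u\rangle$ is coercive on $W^{1,2}_0(\Omega)$.

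For existence I would take a minimizing sequence $(u_n)$, use coercivity to bound $\|\nabla u_n\|_{L^2}$ uniformly, and extract $u_n\rightharpoonup u_V$ weakly in $W^{1,2}_0(\Omega)$. Weak lower semicontinuity of $\mathcal J$ then reduces, via convexity of $Q$ and Mazur's lemma, to strong lsc of $Q$. For a strongly convergent sequence $u_n\to u$ in $W^{1,2}_0(\Omega)$: the Dirichlet term is continuous; $u_n^2\to u^2$ in $L^{2^*/2}(\Omega)$ by the Sobolev embedding and H\"older, so $\int V_- u_n^2\to \int V_- u^2$ by the duality $L^{2^*/2}$--$L^{N/2}$; and $\int V_+ u^2\le \liminf\int V_+ u_n^2$ follows from Fatou's lemma applied after extracting an a.e.-convergent subsequence. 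The linear term is weakly continuous, so the limit $u_V$ is the desired minimizer.

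For the energy estimate I would use the Euler--Lagrange equation $-\Delta u_V+V\,u_V=f$, which is legitimate because $Q$ is a continuous symmetric bilinear form on the set where it is finite. Testing against $u_V$ itself yields
\[
\int_\Omega|\nabla u_V|^2\,dx+\int_\Omega V\,u_V^2\,dx=\langle f,u_V\rangle.
\]
Discarding the nonnegative contribution of $V_+$ and applying the key inequality to the $V_-$-term gives $(1-c)\,\|\nabla u_V\|_{L^2}^2\le \|f\|_{W^{-1,2}(\Omega)}\,\|\nabla u_V\|_{L^2}$, and dividing through produces exactly \eqref{energyestimate}.

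The principal obstacle is that in the non-compact model geometries ($\Omega=\mathbb{R}^N$ or a waveguide) the Sobolev embedding is not compact, so one cannot upgrade $u_n\rightharpoonup u_V$ to strong convergence and argue by direct pointwise Fatou on the full potential term. The convexity viewpoint described above is what allows a single uniform argument across all three model cases, at the cost of bookkeeping the two signs of $V$ separately and relying on the fact that the sharpness of the constraint $\|V_-\|_{L^{N/2}}<T_N$ converts the Sobolev estimate into strict positivity of the quadratic form.
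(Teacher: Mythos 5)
Your proof is correct and follows the same overall direct-method strategy as the paper, but the lower-semicontinuity step is handled by a genuinely different device. The paper establishes weak lower semicontinuity of $\int_\Omega V\,u^2\,dx$ directly: from the weak $W^{1,2}_0$-limit it extracts an a.e.-convergent subsequence using the compact Sobolev embedding on bounded subdomains, deduces that $u_n^2\rightharpoonup u^2$ weakly in $L^{2^*/2}(\Omega)$ (a.e.\ convergence plus $L^{2^*/2}$-boundedness), passes to the limit in the $V_-$-term by duality against $V_-\in L^{N/2}(\Omega)$, and applies Fatou to the $V_+$-term. Your route instead observes that $Q$ is a positive semidefinite (hence convex) quadratic form thanks to $\|V_-\|_{L^{N/2}}<T_N$, proves only \emph{strong} lsc of $Q$ (where $u_n^2\to u^2$ strongly in $L^{2^*/2}$, so the $V_-$-term is handled trivially, and Fatou handles $V_+$), and upgrades to weak lsc via convexity and Mazur's lemma. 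Your version avoids the local compact embedding argument and the weak-$L^{2^*/2}$ lemma, at the cost of invoking the Mazur machinery; both are entirely sound. For the energy estimate your computation is also slightly more streamlined: you estimate $\langle f,u_V\rangle\le\|f\|_{W^{-1,2}}\|\nabla u_V\|_{L^2}$ directly and divide, whereas the paper uses Young's inequality with a free parameter $\delta$ and then optimizes; both yield the same constant $T_N/(T_N-\|V_-\|_{L^{N/2}})$. One small point you should make explicit rather than gloss over: to test the Euler--Lagrange equation with $u_V$ itself, you need $u_V\in L^2(\Omega;V)$, which indeed follows from finiteness of the minimum (so $\int V_+ u_V^2<\infty$, and $\int V_- u_V^2<\infty$ is automatic from the Sobolev bound) — the paper records this via a density argument.
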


\begin{proof}
At first we observe that, taking $u=0$ gives $\mathcal{E}_f(V)\le0$. Moreover, since $V\in\mathcal{V}$, the energy functional is bounded from below, because
\begin{equation}
\label{ultimora}
\int_\Omega V\,u^2\,dx\ge-\int_\Omega V_-\,u^2\,dx
\ge-\|V_-\|_{L^{N/2}(\Omega)}\,\|u\|^2_{L^{2^*}(\Omega)}
\ge -\,\frac{\|V_-\|_{L^{N/2}(\Omega)}}{T_N}\int_\Omega|\nabla u|^2\,dx,
\end{equation}
and thus
\begin{equation}\label{dalbasso}
\begin{split}
\frac{1}{2}\int_\Omega|\nabla u|^2\,dx+&\frac{1}{2}\int_\Omega V\,u^2\,dx-\langle f,u\rangle\\
&\ge\frac{1}{2}\left(1-\frac{\|V_-\|_{L^{N/2}(\Omega)}}{T_N}-\delta\right)\int_\Omega|\nabla u|^2\,dx-\frac{1}{2\,\delta}\,\|f\|^2_{W^{-1,2}(\Omega)},
\end{split}
\end{equation}
for every $0<\delta\le 1$. Thus $\mathcal{E}_f(V)$ is finite. Let now $\{u_n\}_{n\in\mathbb{N}}\subset W^{1,2}_0(\Omega)$ be a minimizing sequence; we can assume that
\[
\frac{1}{2}\int_\Omega|\nabla u_n|^2\,dx+\frac{1}{2}\int_\Omega V\,u_n^2\,dx-\langle f,u_n\rangle\le \mathcal{E}_f(V)+1.
\] 
By \eqref{dalbasso}, if we take $\delta\ll 1$ the sequence $\{u_n\}_{n\in\mathbb{N}}$ is bounded in $W^{1,2}_0(\Omega)$, so $u_n$ weakly converges (up to a subsequence) in $W^{1,2}_0(\Omega)$ to a function $u\in W^{1,2}_0(\Omega)$. Moreover, by the compact Sobolev embedding of Proposition \ref{prop:embed} iii), we have strong convergence in $L^s(\Omega')$, for every (smooth) $\Omega'\Subset\Omega$ and every $1\le s<2^*$. In particular, $u_n$ converges almost everywhere (up to a subsequence) in $\Omega$ to $u$. Also observe that still by Proposition \ref{prop:embed} we have weak convergence of $u_n^2$ to $u^2$ in $L^{2^*/2}(\Omega)$. By using this and the Fatou Lemma, we get
\[
\liminf_{n\to\infty} \int_\Omega V\, u_n^2\, dx=\liminf_{n\to\infty}\left[\int_\Omega V_+\, u_n^2\, dx-\int_{\Omega} V_-\, u_n^2\, dx\right]\ge \int_\Omega V\, u^2\, dx.
\]
Finally, the weak lower semicontinuity of the norm implies
\[
\begin{split}
\frac{1}{2}\int_\Omega|\nabla u|^2\,dx&+\frac{1}{2}\int_\Omega V\,u^2\,dx-\langle f,u\rangle\\
&\le\liminf_{n\to\infty}\left[\frac{1}{2}\int_\Omega|\nabla u_n|^2\,dx
+\frac{1}{2}\int_\Omega V\,u_n^2\,dx-\langle f,u_n\rangle\right]=\mathcal{E}_f(V),
\end{split}
\]
which gives the existence of a minimizer $u_V$. 
\vskip.2cm\noindent
This function $u_V$ satisfies the Euler-Lagrange equation
\[
\int_\Omega\langle\nabla u,\nabla\varphi\rangle\,dx+\int_\Omega V\,u\,\varphi\,dx=\langle f,\varphi\rangle,
\]
for every $\varphi\in C^\infty_0(\Omega)\cap L^2(\Omega;V)$, where for $V\in\mathcal{V}$ we set
\[
L^2(\Omega;V)=\left\{\varphi\ :\ \int_\Omega |V|\,\varphi^2\,dx<+\infty \right\}.
\] 
By density, the previous equation holds for every $\varphi\in W^{1,2}_0(\Omega)\cap L^2(\Omega;V)$.
By taking $u_V$ as a test function and then appealing to
\[
|\langle f,u\rangle|\le\frac{1}{2\,\delta}\,\|f\|^2_{W^{-1,2}(\Omega)}+\frac{\delta}{2}\,\|u\|^2_{W^{1,2}_0(\Omega)},
\]
we have the estimate
\[
\left(1-\frac{\delta}{2}\right)\,\int_\Omega|\nabla u_V|^2\,dx+\int_\Omega V\,u^2_V\,dx\le\frac{1}{2\,\delta}\|f\|^2_{W^{-1,2}(\Omega)}.
\]
By using \eqref{ultimora} and choosing 
\[
\delta=1-\frac{\|V_-\|_{L^{N/2}(\Omega)}}{T_N},
\]
we get \eqref{energyestimate}.
\end{proof}
\begin{oss}
If $\|V_-\|_{L^{N/2}}\ge T_N$, in general problem \eqref{problema} is not well-posed. Indeed, take $\Omega=\mathbb{R}^N$ and indicate by $U\in W^{1,2}_0(\mathbb{R}^N)$ a positive function such that
\[
\int_{\mathbb{R}^N} |\nabla U|^2\, dx=T_N\qquad \mbox{ and }\qquad \int_{\mathbb{R}^N} |U|^{2^*}\, dx=1.
\]
We then take
\[
V=-T_N\, U^{4/(N-2)},
\]
and $f\in L^{(2^*)'}(\mathbb{R}^N)$ such that
\[
\int_{\mathbb{R}^N} f\, U\, dx>0.
\]
By evaluating the functional in \eqref{problema} on the sequence $u_n=n\, U$, we get
\[
\frac{1}{2}\int_\Omega |\nabla u_n|^2\,dx+\frac{1}{2}\int_\Omega V\,u^2_n\,dx-\langle f,u_n\rangle=-n\,\langle f,U\rangle,
\]
thus the functional is unbounded from below.
\end{oss}

\begin{lm}\label{lm_L2V}
Let $V_1,V_2\in\mathcal{V}$ be two admissible potentials and let $u_1,u_2\in W^{1,2}_0(\Omega)$ be solutions of \eqref{problema}. Then
\begin{equation}
\label{L2V}
\left|\int_\Omega V_1\,u^2_1\,dx-\int_\Omega V_2\,u_2^2\,dx\right|\le C\,\|f\|_{W^{-1,2}(\Omega)}\,\|u_1-u_2\|_{W^{1,2}_0(\Omega)},
\end{equation}
where 
\[
C=1+\frac{T_N}{T_N-\|(V_1)_-\|_{L^{N/2}(\Omega)}}+\frac{T_N}{T_N-\|(V_2)_-\|_{L^{N/2}(\Omega)}}.
\]
In particular $C=3$ if $V_1,V_2$ are nonnegative.
\end{lm}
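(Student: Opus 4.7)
The plan is to combine each $u_i$'s Euler--Lagrange identity with an algebraic factorization of the gradient terms to reduce the left-hand side of \eqref{L2V} to quantities controlled by $\|u_1-u_2\|_{W^{1,2}_0(\Omega)}$ and $\|f\|_{W^{-1,2}(\Omega)}$.

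First, I would test the Euler--Lagrange equation for $u_i$ against $u_i$ itself, which is legitimate by the density argument at the end of the preceding proof. This yields
\[
\int_\Omega V_i\,u_i^2\,dx = \langle f, u_i\rangle - \int_\Omega |\nabla u_i|^2\,dx,\qquad i=1,2.
\]
Subtracting the two identities gives
\[
\int_\Omega V_1\,u_1^2\,dx-\int_\Omega V_2\,u_2^2\,dx = \langle f, u_1-u_2\rangle - \int_\Omega\bigl(|\nabla u_1|^2-|\nabla u_2|^2\bigr)\,dx.
\]

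The key trick is then to factor the gradient term via $a^2-b^2=(a-b)(a+b)$, applied in an inner-product sense:
\[
\int_\Omega\bigl(|\nabla u_1|^2-|\nabla u_2|^2\bigr)\,dx = \int_\Omega\langle\nabla(u_1-u_2),\nabla(u_1+u_2)\rangle\,dx.
\]
Applying Cauchy--Schwarz bounds this by $\|u_1-u_2\|_{W^{1,2}_0(\Omega)}\bigl(\|u_1\|_{W^{1,2}_0(\Omega)}+\|u_2\|_{W^{1,2}_0(\Omega)}\bigr)$. Each of the norms on the right is controlled by the already established energy inequality \eqref{energyestimate}, producing the two fractions appearing in $C$.

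Finally, the duality term satisfies $|\langle f, u_1-u_2\rangle|\le \|f\|_{W^{-1,2}(\Omega)}\|u_1-u_2\|_{W^{1,2}_0(\Omega)}$, contributing the leading $1$ in $C$. Summing these bounds via the triangle inequality yields \eqref{L2V} with exactly the stated constant. The last assertion ($C=3$ when $V_1,V_2\ge 0$) is then immediate since $(V_i)_-=0$ makes each fraction equal to $1$. I do not anticipate a serious obstacle: the only delicate point is admitting $\varphi=u_i$ as a test function in the Euler--Lagrange equation, but this was already justified in the previous proof by approximation in $W^{1,2}_0(\Omega)\cap L^2(\Omega;V_i)$.
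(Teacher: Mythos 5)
Your proof is correct and essentially the same as the paper's: both test the Euler--Lagrange equations against $u_i$, subtract, bound the gradient difference by $(\|u_1\|_{W^{1,2}_0}+\|u_2\|_{W^{1,2}_0})\,\|u_1-u_2\|_{W^{1,2}_0}$, and invoke \eqref{energyestimate}. The only cosmetic difference is that you use the Hilbert-space polarization identity $\|a\|^2-\|b\|^2=\langle a-b,a+b\rangle$ followed by Cauchy--Schwarz, whereas the paper factors the scalar quantity $|\,\|\nabla u_1\|_{L^2}^2-\|\nabla u_2\|_{L^2}^2\,|=(\|\nabla u_1\|_{L^2}+\|\nabla u_2\|_{L^2})\,|\,\|\nabla u_1\|_{L^2}-\|\nabla u_2\|_{L^2}\,|$ and then applies the reverse triangle inequality; both yield identical bounds.
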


\begin{proof}
From the respective PDEs, we obtain
\[
\int_\Omega|\nabla u_i|^2\,dx+\int_\Omega V_i\,u^2_i\,dx=\langle f,u_i\rangle,\qquad i=1,2,
\]
so that
\[
\left|\int_\Omega V_1\,u_1^2\,dx-\int_\Omega V_2\,u_2^2\,dx\right|\le\left|\int_\Omega|\nabla u_1|^2\,dx-\int_\Omega|\nabla u_2|^2\,dx\right|+\|f\|_{W^{-1,2}(\Omega)}\|u_1-u_2\|_{W^{1,2}_0(\Omega)}.
\]
Finally, we observe that 
\[
\begin{split}
\left|\|\nabla u_1\|^2_{L^2(\Omega)}-\|\nabla u_2\|^2_{L^2(\Omega)}\right|
&=\Big(\|\nabla u_1\|_{L^2(\Omega)}+\|\nabla u_2\|_{L^2(\Omega)}\Big)\,\left|\|\nabla u_1\|_{L^2(\Omega)}-\|\nabla u_2\|_{L^2(\Omega)}\right|\\
&\le\Big(\|\nabla u_1\|_{L^2(\Omega)}+\|\nabla u_2\|_{L^2(\Omega)}\Big)\,\|u_1-u_2\|_{W^{1,2}_0(\Omega)},
\end{split}
\]
then we can conclude by using \eqref{energyestimate}.
\end{proof}

\begin{lm}
Let $V_1,V_2\in\mathcal{V}$ be two admissible potentials. Let $u_1,u_2\in W^{1,2}_0(\Omega)$ be solutions of \eqref{problema} such that 
\[
u_i\in L^2(\Omega;V_1)\cap L^2(\Omega;V_2),\qquad i=1,2.
\]
Then we have
\begin{equation}\label{strano}
\mathcal{E}_f(V_1)-\mathcal{E}_f(V_2)=\frac{1}{2}\int_\Omega (V_1-V_2)\,u_1\,u_2\,dx.
\end{equation}
In particular there holds
\begin{equation}\label{strano2}
|\mathcal{E}_f(V_1)-\mathcal{E}_f(V_2)|\le\frac{1}{2}\left(\int_\Omega|V_1-V_2|\,u_1^2\,dx\right)^{1/2}\left(\int_\Omega|V_1-V_2|\,u_2^2\,dx\right)^{1/2}.
\end{equation}
\end{lm}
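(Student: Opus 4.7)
The plan is to obtain the identity \eqref{strano} by cross-testing the two Euler-Lagrange equations, and then derive \eqref{strano2} from Cauchy-Schwarz. The hypothesis $u_i\in L^2(\Omega;V_1)\cap L^2(\Omega;V_2)$ is there precisely so that each $u_i$ is an admissible test function for the equation satisfied by $u_j$; as noted in the proof of the preceding proposition, the weak formulation of $-\Delta u_j+V_j u_j=f$ extends by density from $C^\infty_0(\Omega)\cap L^2(\Omega;V_j)$ to all of $W^{1,2}_0(\Omega)\cap L^2(\Omega;V_j)$.

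First I would record the identity $\mathcal{E}_f(V_i)=-\tfrac12\langle f,u_i\rangle$, obtained by testing $-\Delta u_i+V_i u_i=f$ against $u_i$ itself and substituting into the expression for the energy. This immediately gives
\[
\mathcal{E}_f(V_1)-\mathcal{E}_f(V_2)=-\tfrac12\,\langle f,u_1-u_2\rangle.
\]
Next I would test the equation for $u_1$ against $u_2$ and the equation for $u_2$ against $u_1$ (allowed by the integrability assumption); the Dirichlet pieces $\int \nabla u_1\cdot\nabla u_2\,dx$ coincide, and subtracting the two identities yields
\[
\int_\Omega (V_1-V_2)\,u_1\,u_2\,dx=\langle f,u_2-u_1\rangle.
\]
Combining these two displays gives \eqref{strano}.

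For \eqref{strano2}, I would simply bound
\[
|\mathcal{E}_f(V_1)-\mathcal{E}_f(V_2)|\le \tfrac12\int_\Omega |V_1-V_2|\,|u_1|\,|u_2|\,dx,
\]
and then apply the Cauchy-Schwarz inequality with respect to the (possibly signed but here taken in absolute value) measure $|V_1-V_2|\,dx$, writing $|u_1||u_2|=(|u_1|\sqrt{|V_1-V_2|})(|u_2|\sqrt{|V_1-V_2|})$; the factors $\sqrt{|V_1-V_2|}\,u_i$ lie in $L^2(\Omega)$ thanks to the assumption $u_i\in L^2(\Omega;V_1)\cap L^2(\Omega;V_2)$, which dominates $|V_1-V_2|\le |V_1|+|V_2|$. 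This yields the stated product of weighted $L^2$ norms.

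The only genuine subtlety is the justification of cross-testing, i.e. that $u_2$ may legitimately be inserted in the weak form of the PDE for $u_1$ despite $V_1$ possibly being unbounded or sign-changing; all other steps are elementary manipulations and Cauchy-Schwarz. Since the hypothesis is tailored exactly to this issue, I expect no real obstacle.
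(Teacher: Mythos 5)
Your proof is correct and rests on the same core mechanism as the paper's: cross-testing the two Euler--Lagrange equations, with the hypothesis $u_i\in L^2(\Omega;V_1)\cap L^2(\Omega;V_2)$ ensuring the test functions are admissible by density. The algebraic organization differs slightly and is arguably cleaner: you first extract $\mathcal{E}_f(V_i)=-\tfrac12\langle f,u_i\rangle$ by self-testing, which reduces the left-hand side to $-\tfrac12\langle f,u_1-u_2\rangle$, and then obtain $\int_\Omega(V_1-V_2)u_1u_2\,dx=\langle f,u_2-u_1\rangle$ by testing the equation for $u_1$ against $u_2$ and vice versa (so the Dirichlet cross-terms cancel). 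The paper instead uses $u_1-u_2$ as the single test function in both equations and manipulates the difference of Dirichlet energies directly. Both are elementary rearrangements of the same pair of weak formulations; yours perhaps isolates the key cancellation a bit more transparently. The deduction of \eqref{strano2} by weighted Cauchy--Schwarz is identical to the paper's.
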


\begin{proof}
We first observe that, from the hypothesis on the potentials, we can use $u_1-u_2$ as a test function for the equations solved by $u_1$ and $u_2$, i.e.
\begin{equation}\label{equazioni}
\int_\Omega\langle\nabla u_i,\nabla\varphi\rangle\,dx+\int_\Omega V_i\,u_i\,\varphi\,dx=\langle f,\varphi\rangle,\qquad \mbox{ for every } \varphi\in W^{1,2}_0(\Omega)\cap L^2(\Omega;V_i),\ i=1,2.
\end{equation}
We have
\[
\begin{split}
\mathcal{E}_f(V_1)-\mathcal{E}_f(V_2)&=\frac{1}{2}\int_\Omega|\nabla u_1|^2\,dx+\frac{1}{2}\int_\Omega V_1\,u^2_1\,dx-\langle f,u_1\rangle\\
&-\frac{1}{2}\int_\Omega|\nabla u_2|^2\,dx-\frac{1}{2}\int_\Omega V_2\,u^2_2\,dx+\langle f,u_2\rangle.
\end{split}
\]
On the other hand
\[
\begin{split}
\frac{1}{2}\int_\Omega|\nabla u_1|^2\,dx-\frac{1}{2}\int_\Omega|\nabla u_2|^2\,dx
&=\frac{1}{2}\int_\Omega\langle\nabla u_1,\nabla(u_1-u_2)\rangle\,dx\\
&-\frac{1}{2}\int_\Omega\langle\nabla u_2,\nabla(u_2-u_1)\rangle\,dx,
\end{split}
\]
thus by appealing to \eqref{equazioni}, we get
\[
\begin{split}
\frac{1}{2}\int_\Omega|\nabla u_1|^2\,dx-\frac{1}{2}\int_\Omega|\nabla u_2|^2\,dx
&=-\frac{1}{2}\int_\Omega V_1\,u_1\,(u_1-u_2)\,dx+\frac{1}{2}\,\langle f,u_1-u_2\rangle\\
&+\frac{1}{2}\int_\Omega V_2\,u_2\,(u_2-u_1)\,dx-\frac{1}{2}\,\langle f,u_2-u_1\rangle\\
&=-\frac{1}{2}\int_\Omega V_1\,u_1^2\,dx+\frac{1}{2}\int_\Omega V_2\,u_2^2\,dx+\langle f,u_1-u_2\rangle\\
&+\frac{1}{2}\int_\Omega(V_1-V_2)\,u_1\,u_2\,dx.
\end{split}
\]
This concludes the proof of \eqref{strano}. The estimate \eqref{strano2} just follows by applying H\"older inequality.
\end{proof}

\begin{oss}
\label{oss:lipschitz}
Observe that if $V_1,V_2\in L^p(\Omega)$ with $p>1$ and $u_1,u_2\in L^{2p/(p-1)}(\Omega)$, then the hypotheses of the previous Lemma are verified and \eqref{strano2} gives the Lipschitz estimate
\[
|\mathcal{E}_f(V_1)-\mathcal{E}_f(V_2)|\le\frac{1}{2}\,\|V_1-V_2\|_{L^p(\Omega)}\,\sqrt{\|u_1\|_{L^{2p/(p-1)}(\Omega)}}\,\sqrt{\|u_2\|_{L^{2p/(p-1)}(\Omega)}}.
\]
By Proposition \ref{prop:embed}, the condition $u_1,u_2\in L^{2p/(p-1)}(\Omega)$ is verified for example if
\begin{itemize}
\item $|\Omega|=+\infty$ and $p=N/2$;
\vskip.2cm
\item $\Omega=\omega\times\mathbb{R}$ is a waveguide and $p\ge N/2$;
\vskip.2cm
\item $|\Omega|<+\infty$ and $p\ge N/2$.
\end{itemize}
\end{oss}

\section{Maximization problems}\label{sec:max}

In this section we fix $p>1$ and we consider the optimization problem for potentials
\begin{equation}\label{maximization}
\max_{V\in\mathcal{V}}\left\{\mathcal{E}_f(V)\ :\ \int_\Omega|V|^p\,dx\le 1\right\}.
\end{equation}
We also introduce the strictly convex functional
\begin{equation}\label{G}
G_{p,f}(u)=\frac{1}{2}\int_\Omega|\nabla u|^2\,dx+\frac{1}{2}\left(\int_\Omega|u|^{2p/(p-1)}\,dx\right)^{(p-1)/p}-\langle f,u\rangle,\qquad u\in W^{1,2}_0(\Omega),
\end{equation}
where it is intended that $G_{p,f}(u)=+\infty$ if $u\not\in L^{2p/(p-1)}(\Omega)$. We recall the following existence result from \cite{BGRV}. We give the proof for the reader's convenience.

\begin{prop}
The problem \eqref{maximization} admits a solution and is equivalent to
\begin{equation}
\label{maximizationbis}
\max_{V\in\mathcal{V}}\left\{\mathcal{E}_f(V)\ :\ V\ge0,\ \int_\Omega V^p\,dx=1\right\}.
\end{equation}
The solution $V_0$ is unique and is of the form
\begin{equation}\label{relazione}
V_0=\left(\int_\Omega|v_0|^{2\,p/(p-1)}\,dx\right)^{-1/p}v_0^{2/(p-1)},
\end{equation}
where $v_0\in W^{1,2}_0(\Omega)\cap L^{2p/(p-1)}(\Omega)$ is the unique minimizer of $G_{p,f}$. Moreover, we have
\begin{equation}\label{gagliardo}
\mathcal{E}_{f}(V_0)=G_{p,f}(v_0).
\end{equation}
\end{prop}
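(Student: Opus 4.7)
The proof recognizes this maximization as a min-max in disguise that reduces to the explicit convex problem of minimizing $G_{p,f}$. First, since $V \mapsto \mathcal{E}_f(V)$ is monotone nondecreasing in $V$ pointwise (clear from \eqref{problema}) and $\||V|\|_{L^p(\Omega)} = \|V\|_{L^p(\Omega)}$, I may restrict to $V \ge 0$; moreover, for $V \ge 0$, scaling $V$ by $\lambda > 1$ still gives an admissible potential after rescaling back into the constraint, and this operation only enlarges $\mathcal{E}_f$, so the constraint is saturated at any maximizer, yielding the equivalence with \eqref{maximizationbis}. Then, for every admissible $V \ge 0$ with $\int_\Omega V^p\,dx \le 1$ and every $u \in W^{1,2}_0(\Omega) \cap L^{2p/(p-1)}(\Omega)$, H\"older's inequality gives
\[
\int_\Omega V\,u^2\,dx \le \left(\int_\Omega V^p\,dx\right)^{1/p}\left(\int_\Omega |u|^{2p/(p-1)}\,dx\right)^{(p-1)/p} \le \left(\int_\Omega |u|^{2p/(p-1)}\,dx\right)^{(p-1)/p},
\]
so the functional defining $\mathcal{E}_f(V)$ is pointwise dominated by $G_{p,f}$; taking $\min_u$ yields the universal upper bound $\mathcal{E}_f(V) \le \inf_{W^{1,2}_0(\Omega)} G_{p,f}$.

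Next I would establish the existence and uniqueness of a minimizer $v_0 \in W^{1,2}_0(\Omega) \cap L^{2p/(p-1)}(\Omega)$ for $G_{p,f}$ by the direct method: strict convexity yields uniqueness, coercivity follows from Young's inequality applied to $|\langle f,u\rangle|$, and weak lower semicontinuity is standard for $\|\nabla u\|_{L^2}^2$ and follows from Fatou's lemma for the $L^{2p/(p-1)}$-norm (treating $G_{p,f}$ as extended-real-valued off $L^{2p/(p-1)}(\Omega)$). Writing $q = 2p/(p-1)$, the Euler-Lagrange equation satisfied by $v_0$ is
\[
-\Delta v_0 + \|v_0\|_{L^q(\Omega)}^{2-q}\,|v_0|^{q-2}\,v_0 = f,
\]
and the identity $2-q = -q/p$ shows that the coefficient of $v_0$ is precisely the $V_0$ of \eqref{relazione}. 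A direct computation gives $V_0 \ge 0$, $\int_\Omega V_0^p\,dx = 1$, and $\int_\Omega V_0\,v_0^2\,dx = \|v_0\|_{L^q(\Omega)}^2 < +\infty$, so $v_0 \in L^2(\Omega;V_0)$ is the state function associated to $V_0$. Because the H\"older inequality of the first paragraph is saturated at the pair $(v_0,V_0)$, we deduce $\mathcal{E}_f(V_0) = G_{p,f}(v_0)$, which combined with the upper bound proves both that $V_0$ attains the maximum and the identity \eqref{gagliardo}.

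For uniqueness of $V_0$, suppose $\tilde V$ is any other maximizer and let $\tilde u$ be its state function. Applying the same H\"older estimate to the pair $(\tilde u,\tilde V)$ yields the chain
\[
\mathcal{E}_f(\tilde V) = \tfrac12\|\nabla \tilde u\|_{L^2}^2 + \tfrac12\int_\Omega \tilde V\,\tilde u^2\,dx - \langle f,\tilde u\rangle \le G_{p,f}(\tilde u) \le G_{p,f}(v_0) = \mathcal{E}_f(V_0) = \mathcal{E}_f(\tilde V),
\]
which collapses to equalities: strict convexity of $G_{p,f}$ forces $\tilde u = v_0$, and the equality case in H\"older forces $\tilde V$ to be proportional to $|v_0|^{2/(p-1)}$ with the unique normalization imposed by $\int \tilde V^p = 1$, hence $\tilde V = V_0$. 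The main technical delicacy lies in the non-compact settings (in particular $\Omega = \mathbb{R}^N$), where the Sobolev embedding into $L^{2p/(p-1)}$ need not hold: one must argue that minimizing sequences for $G_{p,f}$ automatically stay bounded in $L^{2p/(p-1)}(\Omega)$, a property built into the very definition of $G_{p,f}$ through the explicit appearance of the $L^{2p/(p-1)}$-norm.
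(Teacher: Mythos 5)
Your overall strategy coincides with the paper's: reduce to nonnegative potentials saturating the constraint, dominate $\mathcal{E}_f(V)$ by $G_{p,f}$ via H\"older's inequality, and identify $V_0$ through the equality case. The extra detail you supply (Euler--Lagrange equation for $v_0$, verification that $v_0$ is the state function of $V_0$, and the identity $2-q=-q/p$) is all correct and consistent with what the paper delegates to \cite{BGRV}.

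However, the uniqueness chain
\[
\mathcal{E}_f(\tilde V)\;\le\;G_{p,f}(\tilde u)\;\le\;G_{p,f}(v_0)\;=\;\mathcal{E}_f(V_0)\;=\;\mathcal{E}_f(\tilde V)
\]
is flawed: the middle inequality points the wrong way, since $v_0$ is the \emph{minimizer} of $G_{p,f}$ and hence $G_{p,f}(v_0)\le G_{p,f}(\tilde u)$, not the reverse. Nothing then collapses. There is also a secondary issue: you need $\tilde u\in L^{2p/(p-1)}(\Omega)$ for $G_{p,f}(\tilde u)$ to be finite, and this is not automatic for the state function of an arbitrary maximizer when $p<N/2$. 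Both problems disappear if you instead test the variational problem defining $\mathcal{E}_f(\tilde V)$ with $v_0$ (which \emph{is} known to lie in $L^{2p/(p-1)}$): then
\[
\mathcal{E}_f(\tilde V)\;\le\;\tfrac12\!\int_\Omega|\nabla v_0|^2\,dx+\tfrac12\!\int_\Omega \tilde V\,v_0^2\,dx-\langle f,v_0\rangle\;\le\;G_{p,f}(v_0)\;=\;\mathcal{E}_f(V_0)\;=\;\mathcal{E}_f(\tilde V),
\]
and all inequalities become equalities. The first equality forces $v_0$ to be the (unique, by strict convexity) state function of $\tilde V$, and the equality case in H\"older for the pair $(v_0,\tilde V)$ together with the normalization $\int_\Omega \tilde V^p\,dx=1$ yields $\tilde V=V_0$.
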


\begin{proof}
We start by proving that we can restrict the optimization to positive potentials that saturate the constraint on the $L^p$ norm. We have
\[
\sup_{V\in\mathcal{V}}\left\{\mathcal{E}_f(V)\ :\ V\ge0,\ \int_\Omega|V|^p\,dx=1\right\}\le\sup_{V\in\mathcal{V}}\left\{\mathcal{E}_f(V)\ :\ \int_\Omega|V|^p\,dx\le 1\right\}.
\]
On the other hand it is immediate to see that
\[
\mathcal{E}_f(V)\le\mathcal{E}_f\left(\frac{|V|}{\|V\|_{L^p(\Omega)}}\right),\qquad\mbox{ for every }V\in L^p(\Omega)\setminus\{0\} \mbox{ with } \int_\Omega |V|^p\, dx\le 1,
\]
thus the two suprema coincide.
\vskip.2cm\noindent 
In order to characterize the optimal potential $V_0$, we observe that for every $u\in L^{2\,p/(p-1)}(\Omega)$ and every admissible potential, we get
\begin{equation}
\label{holders}
\int_\Omega V\,u^2\,dx\le\left(\int_\Omega |u|^{2\,p/(p-1)}\,dx\right)^{(p-1)/p},
\end{equation}
thanks to H\"older inequality. By appealing to the definition of the energy $\mathcal{E}_f(V)$, we then get
\[
\mathcal{E}_{f}(V)\le \frac{1}{2}\,\int_\Omega|\nabla u|^2\,dx+\frac{1}{2}\,\left(\int_\Omega|u|^{2p/(p-1)}\,dx\right)^{(p-1)/p}-\langle f,u\rangle,\qquad\forall u\in W^{1,2}_0(\Omega).
\]
By taking the infimum on $u$, we obtain
\[
\mathcal{E}_f(V)\le\min_{u\in W^{1,2}_0(\Omega)}G_{p,f}(u),\qquad\mbox{ for every }V\mbox{ admissible}.
\]
On the other hand, we see that if $v_0$ is a minimizer of $G_{p,f}$ and $(V_0,v_0)$ achieves equality in \eqref{holders}, we have equality in the last inequality. By appealing to the equality cases in H\"older inequality, we get the characterization \eqref{relazione}.
\end{proof}

\begin{oss}\label{oss:poincare}
For the sake of completeness we observe that by a standard homogeneity argument
\[
\mathcal{E}_f(V)=-\frac{1}{2}\,\sup_{u\in W^{1,2}_0(\Omega)\setminus\{0\}}\frac{\langle f,u\rangle^2}{\displaystyle\int_\Omega|\nabla u|^2\,dx+\int_\Omega Vu^2\,dx}.
\]
By using \eqref{holders} we can infer
\[
\mathcal{E}_f(V)\le-\frac{1}{2}\,\sup_{u\in W^{1,2}_0(\Omega)\setminus\{0\}}\frac{\langle f,u\rangle^2}{\displaystyle\int_\Omega|\nabla u|^2\,dx+\left(\int_\Omega |u|^{2p/(p-1)}\,dx\right)^{(p-1)/p}},
\]
for every $V\in L^p(\Omega)$ with unit norm. This implies that
\[
\sup\left\{\mathcal{E}_f(V)\ :\ \int_\Omega |V|^p\,dx=1\right\}=-\frac{1}{2}\,\sup_{u\in W^{1,2}_0(\Omega)\setminus\{0\}}\frac{\langle f,u\rangle^2}{\displaystyle\int_\Omega|\nabla u|^2\,dx+\left(\int_\Omega|u|^{2p/(p-1)}\right)^{(p-1)/p}},
\]
so that $\mathcal{E}_f(V_0)$ is related to the best constant in a Poincar\'e-Sobolev type inequality, i.e.
\[
\int_\Omega|\nabla u|^2\,dx+\left(\int_\Omega|u|^{2p/(p-1)}\,dx\right)^{(p-1)/p}\ge\frac{1}{2\,|\mathcal{E}_f(V_0)|}\,\langle u,f\rangle^2,\qquad\forall u\in W^{1,2}_0(\Omega),
\]
with equality holding if and only if $u$ is proportional to $v_0$.
\end{oss}

\section{Stability for maximization problems}
\label{sec:stabmax}

In what follows $c_1$ will denote the constant
\[
c_1:=\left(\int_\Omega|v_0|^{2p/(p-1)}\,dx\right)^{(p-1)/2p},
\]
where $v_0$ is the unique minimizer of $G_{p,f}$.
In this section we prove a quantitative improvement of the inequality
\[
\mathcal{E}_f(V_0)\ge\mathcal{E}_f(V),\qquad\mbox{ for every }V\in \mathcal{V}\mbox{ such that }\int_\Omega|V|^p\,dx\le 1.
\]
At this aim, we need the following result, see \cite[Theorem 3.1]{CFL} for a proof. An earlier related result could be found in \cite[Proposition 2.6]{CEFT}.
\begin{lm}[Quantitative H\"older inequality]\label{quahol}
Let $2\le q<\infty$ and $q'=q/(q-1)$. For every $f\in L^q(\Omega)$ and $g\in L^{q'}(\Omega)$ such that $\|f\|_{L^q(\Omega)}=\|g\|_{L^{q'}(\Omega)}=1$, we have
\begin{equation}
\label{holderquant1}
\left|\int_\Omega f\,g\,dx\right|\le1-\frac{q'-1}{4}\,\Big\||f|^{q-2}f-g\Big\|^2_{L^{q'}(\Omega)},
\end{equation}
and
\begin{equation}
\label{holderquant2}
\left|\int_\Omega f\,g\,dx\right|\le 1-\frac{1}{q\,2^{q-1}}\,\Big\|f-|g|^{q'-2}g\Big\|^q_{L^q(\Omega)}.
\end{equation}
\end{lm}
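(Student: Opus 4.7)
The plan is to reduce both inequalities to sharp uniform-convexity estimates (Hanner's inequality for $L^{q'}$ and Clarkson's inequality for $L^q$). By replacing $g$ with $\operatorname{sgn}\bigl(\int fg\bigr)\,g$ we may assume $\int_\Omega fg\,dx\ge 0$, so it suffices in either case to bound the deficit $1-\int fg$ from below. Introduce the dual unit vectors $\tilde f:=|f|^{q-2}f\in L^{q'}(\Omega)$ and $\tilde g:=|g|^{q'-2}g\in L^q(\Omega)$. Since $(q-1)q'=q$ and $(q'-1)q=q'$, one checks $\|\tilde f\|_{L^{q'}}=\|\tilde g\|_{L^q}=1$ and $\int f\tilde f\,dx=\int g\tilde g\,dx=1$, which are the two equality configurations in H\"older's inequality.

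For \eqref{holderquant1} I would test $f$ against the average $(\tilde f+g)/2$. H\"older gives
\[
\frac{1+\int_\Omega fg\,dx}{2}=\int_\Omega f\cdot\frac{\tilde f+g}{2}\,dx\le \left\|\frac{\tilde f+g}{2}\right\|_{L^{q'}(\Omega)}.
\]
Since $q\ge 2$ means $q'\in(1,2]$, Hanner's inequality yields
\[
\left\|\frac{\tilde f+g}{2}\right\|_{L^{q'}}^{2}+(q'-1)\left\|\frac{\tilde f-g}{2}\right\|_{L^{q'}}^{2}\le \frac{\|\tilde f\|_{L^{q'}}^{2}+\|g\|_{L^{q'}}^{2}}{2}=1.
\]
Squaring the H\"older step and using the elementary bound $1-((1+t)/2)^{2}=(1-t)(3+t)/4\le 1-t$ for $t\in[0,1]$, one finds $\frac{q'-1}{4}\|\tilde f-g\|_{L^{q'}}^{2}\le 1-\int fg$, which is \eqref{holderquant1}.

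For \eqref{holderquant2} I would swap the roles and test $g$ against $(f+\tilde g)/2$, giving
\[
\frac{1+\int_\Omega fg\,dx}{2}=\int_\Omega g\cdot\frac{f+\tilde g}{2}\,dx\le\left\|\frac{f+\tilde g}{2}\right\|_{L^{q}(\Omega)}.
\]
Because $q\ge 2$, Clarkson's inequality in $L^q$ gives
\[
\left\|\frac{f+\tilde g}{2}\right\|_{L^{q}}^{q}+\left\|\frac{f-\tilde g}{2}\right\|_{L^{q}}^{q}\le\frac{\|f\|_{L^{q}}^{q}+\|\tilde g\|_{L^{q}}^{q}}{2}=1.
\]
Raising the H\"older estimate to the $q$-th power and applying the tangent-line bound $1-t^{q}\le q(1-t)$ for $t\in[0,1]$ (valid by convexity of $t\mapsto t^{q}$) produces $\frac{1}{q\,2^{q-1}}\|f-\tilde g\|_{L^{q}}^{q}\le 1-\int fg$, which is \eqref{holderquant2}.

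The only non-routine ingredients are the sharp constants $q'-1$ in Hanner's inequality and $1$ in Clarkson's inequality; once these are taken for granted, the chain of estimates is algebraic. The main obstacle, were one to aim for a fully self-contained presentation, would be re-proving Hanner's inequality in the delicate range $q'\in(1,2]$, which is why this lemma is invoked here by reference to \cite{CFL}.
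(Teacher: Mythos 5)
The paper itself gives no proof of this lemma; it simply cites \cite[Theorem~3.1]{CFL}. Your argument is a reasonable self-contained substitute, and the mechanism — test $f$ against the midpoint $(\tilde f+g)/2$, then invoke a sharp uniform-convexity estimate in $L^{q'}$ for \eqref{holderquant1}, and symmetrically test $g$ against $(f+\tilde g)/2$ and use Clarkson in $L^q$ for \eqref{holderquant2} — is correct. The chain of elementary inequalities ($1-((1+t)/2)^2\le 1-t$ and $1-t^q\le q(1-t)$ on $[0,1]$) is also fine.

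Two remarks. First, the inequality you attribute to Hanner, namely
\[
\left\|\frac{u+v}{2}\right\|_{p}^{2}+(p-1)\left\|\frac{u-v}{2}\right\|_{p}^{2}\le \frac{\|u\|_p^2+\|v\|_p^2}{2},\qquad 1<p\le2,
\]
is the optimal $2$-uniform convexity inequality of Ball, Carlen and Lieb (Invent.\ Math.\ 115, 1994); it can be deduced from Hanner's inequality but is not Hanner's inequality. Since the sharp constant $p-1$ is precisely what produces the factor $q'-1$ in \eqref{holderquant1}, the attribution is worth getting right. The estimate you use for \eqref{holderquant2} is the first Clarkson inequality, valid exactly for $q\ge2$, which you have.

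Second, and more substantively: the sign reduction ``replace $g$ by $\operatorname{sgn}(\int fg)\,g$'' does not reduce the stated inequality to the case $\int fg\ge0$, because flipping the sign of $g$ also changes the right-hand sides $\||f|^{q-2}f-g\|_{q'}$ and $\|f-|g|^{q'-2}g\|_{q}$. In fact the inequalities as printed, with $|\int fg|$, are false when $\int fg<0$: take $\Omega=(0,1)$, $q=q'=2$, $f\equiv1$, $g\equiv-1$; then $|\int fg|=1$ while the right-hand side of either inequality equals $0$. What your argument actually proves is the statement with $\int_\Omega fg\,dx$ (no absolute value) on the left, under no sign assumption — indeed Hölder's inequality gives $\int f\cdot\frac{\tilde f+g}{2}\,dx\le\|\frac{\tilde f+g}{2}\|_{q'}$ regardless of the sign of $\int fg$, and the downstream estimates go through since $1-\int fg\ge0$ always. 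That version is what \cite{CFL} proves and is all that Theorem~\ref{teo:max_stab} and Theorem~\ref{teo:min_stab} use (in those applications the resulting upper bound on $\int fg$ is weakened further to an upper bound on $|\int fg|$ only when it helps). So drop the fictitious sign reduction and simply prove the inequality for $\int fg$ directly; your chain of estimates then stands.
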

\begin{oss}
In the case $q=\infty$ no quantitative inequality of the previous kind may
hold. In fact, by taking $\Omega=(0,1)$ and the functions
\[
f_n=1_{[0,1-1/n]}\qquad \mbox{ and }\qquad g=\frac{1}{2\,\sqrt{x}},
\] 
we obtain
\[
\lim_{n\to\infty} \left(\left|\int_\Omega f_n\, g\, dx\right|-1\right)=0\qquad \mbox{ while }\qquad \left\|f_n-\frac{g}{|g|}\right\|_{L^\infty(\Omega)}=\|f_n-1\|_{L^\infty(\Omega)}=1
\]
\end{oss}

\subsection{Stability of the potentials}
This is the main result of this section.
\begin{teo}[Stability of maximal potentials]\label{teo:max_stab}
Let $V_0$ be the optimal potential achieving the maximum in \eqref{maximization}. Then for every $V\in\mathcal{V}$ such that $\|V\|_{L^p(\Omega)}\le 1$ we have
\begin{equation}\label{stab}
\mathcal{E}_f(V_0)-\mathcal{E}_f(V)\ge\sigma'_M\, \left\||V|^{p-2}\, V-V_0^{p-1}\right\|^2_{L^{p'}(\Omega)},\qquad \mbox{ for } p\ge 2,
\end{equation}
and
\begin{equation}\label{stab2}
\mathcal{E}_f(V_0)-\mathcal{E}_f(V)\ge\sigma''_M\,\left\|V-V_0\right\|^2_{L^{p}(\Omega)},\qquad \mbox{ for } 1<p<2,
\end{equation}
where $\sigma_M'>0$ and $\sigma_M''>0$ are two constants depending only on $p$ and $c_1$ (see Remark \ref{oss:costanti!} below).
\end{teo}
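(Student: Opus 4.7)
The proof combines three ingredients: the variational characterization $\mathcal{E}_f(V_0) = G_{p,f}(v_0)$ from \eqref{gagliardo}, the use of $v_0$ as a test function in the problem \eqref{problema} defining $\mathcal{E}_f(V)$, and the quantitative H\"older inequality of Lemma \ref{quahol}. The key algebraic fact that makes everything fit together is the identity
\[
V_0^{p-1} = \frac{v_0^2}{c_1^2},
\]
which follows directly from \eqref{relazione} and the definition of $c_1$, and which in particular implies $\|v_0^2/c_1^2\|_{L^{p'}(\Omega)} = 1$. Equivalently, the pair $(V_0, v_0^2/c_1^2)$ realizes the equality case in the H\"older inequality $\int_\Omega V\,g\,dx \le \|V\|_{L^p}\|g\|_{L^{p'}}$.

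The first step is a reduction to the case $V \ge 0$ with $\|V\|_{L^p(\Omega)} = 1$. Monotonicity of $V \mapsto \mathcal{E}_f(V)$ in the pointwise order gives $\mathcal{E}_f(V) \le \mathcal{E}_f(|V|)$, while a pointwise comparison using $V_0 \ge 0$ shows that passing from $V$ to $|V|$ does not decrease the right-hand sides of \eqref{stab} and \eqref{stab2}. The further reduction to $\|V\|_{L^p(\Omega)} = 1$ follows from $\mathcal{E}_f(V) \le \mathcal{E}_f(V/\|V\|_{L^p(\Omega)})$; the residual $(1 - \|V\|_{L^p(\Omega)})$ is absorbed into the constants by combining the triangle inequality with the fact, implicit in the central inequality below, that $\mathcal{E}_f(V_0) - \mathcal{E}_f(V) \ge c_1^2(1 - \|V\|_{L^p(\Omega)})/2$. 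Using $v_0$ as a competitor in \eqref{problema} and subtracting the identity \eqref{gagliardo} yields the central inequality
\[
\mathcal{E}_f(V_0) - \mathcal{E}_f(V) \ge \frac{1}{2}\left(c_1^2 - \int_\Omega V\,v_0^2\,dx\right),
\]
whose right-hand side is exactly $c_1^2/2$ times the H\"older deficit for the normalized pair $(V, v_0^2/c_1^2)$.

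The deficit is now quantified by Lemma \ref{quahol}. For $p \ge 2$, I apply \eqref{holderquant1} with $q = p$, $f = V$, $g = v_0^2/c_1^2$; since $V \ge 0$ one has $|V|^{p-2}V = V^{p-1}$, and the identity $V_0^{p-1} = v_0^2/c_1^2$ produces
\[
\int_\Omega V\,v_0^2\,dx \le c_1^2 - \frac{c_1^2}{4(p-1)}\bigl\|V^{p-1} - V_0^{p-1}\bigr\|_{L^{p'}(\Omega)}^2,
\]
which combined with the central inequality gives \eqref{stab}. For $1 < p < 2$ the hypothesis $q \ge 2$ of Lemma \ref{quahol} forces me to exchange the roles of the two factors: taking $q = p'$, $f = v_0^2/c_1^2$, $g = V$, and using that $(v_0^2/c_1^2)^{q-1} = V_0$ (since $q-1 = 1/(p-1)$), inequality \eqref{holderquant1} yields
\[
\int_\Omega V\,v_0^2\,dx \le c_1^2 - \frac{(p-1)c_1^2}{4}\|V - V_0\|_{L^p(\Omega)}^2,
\]
which produces \eqref{stab2}. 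The main subtlety of the argument is this exchange of exponents needed to cover the range $1 < p < 2$; the remaining difficulty is the routine bookkeeping of normalizations and constants in the reduction step.
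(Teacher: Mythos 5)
Your overall strategy coincides with the paper's: the central inequality
$\mathcal{E}_f(V_0)-\mathcal{E}_f(V)\ge\tfrac12\big(c_1^2-\int_\Omega V\,v_0^2\,dx\big)$
obtained by testing with $v_0$, the identification $V_0^{p-1}=v_0^2/c_1^2$, the quantitative H\"older inequality, and the exchange of exponents for $1<p<2$ are all exactly what the paper does. However, your preliminary reduction to $V\ge 0$ contains a genuine error that undermines the argument. You claim that \emph{passing from $V$ to $|V|$ does not decrease the right-hand sides} of \eqref{stab} and \eqref{stab2}, and you need that claim to close the chain (energy gap for $V$ is at least energy gap for $|V|$ which is at least RHS for $|V|$ which is at least RHS for $V$). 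But the pointwise comparison goes the wrong way. On $\{V<0\}$ one has $|V|^{p-2}V=-|V|^{p-1}$, so
\[
\big||V|^{p-2}V-V_0^{p-1}\big|=|V|^{p-1}+V_0^{p-1}\ \ge\ \big||V|^{p-1}-V_0^{p-1}\big|,
\]
using $V_0\ge 0$; hence replacing $V$ by $|V|$ makes $\||V|^{p-2}V-V_0^{p-1}\|_{L^{p'}}$ \emph{smaller}, not larger. The same happens for \eqref{stab2}: on $\{V<0\}$, $|V-V_0|=|V|+V_0\ge \big||V|-V_0\big|$. So the reduction does not give \eqref{stab} or \eqref{stab2} for signed $V$ from the nonnegative case. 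Moreover the second reduction, to $\|V\|_{L^p}=1$ via $\mathcal{E}_f(V)\le\mathcal{E}_f(V/\|V\|_{L^p})$, is itself valid only when $V\ge0$, so it is riding on the flawed first step.

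The good news is that the reduction to $V\ge 0$ is not only wrong but also unnecessary, and dropping it recovers the paper's proof. Lemma~\ref{quahol} applies to any $f$ of unit $L^q$ norm, with no sign restriction, and its conclusion already produces the quantity $|f|^{q-2}f-g$; applied with $f=V/\|V\|_{L^p}$ and $g=V_0^{p-1}$ this yields precisely $\big\||V|^{p-2}V/\|V\|_{L^p}^{p-1}-V_0^{p-1}\big\|_{L^{p'}}^2$, which is what \eqref{stab} asks for after removing the normalization. So instead of normalizing the potential first, normalize inside the application of the quantitative H\"older inequality and then use the triangle inequality together with your own observation that $\mathcal{E}_f(V_0)-\mathcal{E}_f(V)\ge\tfrac{c_1^2}{2}\,(1-\|V\|_{L^p})$ to control the discrepancy term $\big|1-\|V\|_{L^p}^{p-1}\big|$. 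This is exactly the bookkeeping step in the paper, and it requires a preliminary reduction to the regime where the energy gap is small, to guarantee $\|V\|_{L^p}\ge 1/2$ and to square-absorb the discrepancy term. Once you make that replacement your argument and the paper's proof become identical.
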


\begin{proof}
We start observing that by hypothesis 
\[
\|V-V_0\|_{L^p(\Omega)}\le 2,\qquad \mbox{ and }\qquad \left\||V|^{p-2}\,V-|V_0|^{p-2}\, V_0\right\|_{L^{p'}(\Omega)}\le 2,
\]
thus we can always suppose
\begin{equation}
\label{unmezzo}
\mathcal{E}_f(V_0)-\mathcal{E}_f(V)\le \min\left\{\frac{c_1^2}{4},1\right\},
\end{equation}
because otherwise \eqref{stab} and \eqref{stab2} are trivially true, with constants
\[
\sigma'_M=\sigma_M''=\frac{1}{4}\, \min\left\{\frac{c_1^2}{4},1\right\}.
\]
By using $v_0$ as a test function in the variational problem defining $\mathcal{E}_f(V)$ and recalling the definition \eqref{G} of $G_{p,f}$, we get
\[
\begin{split}
\mathcal{E}_f(V)
&\le G_{p,f}(v_0)+\frac{1}{2}\left[\int_\Omega V\,v_0^2\,dx-\left(\int_\Omega|v_0|^{2p/(p-1)}\,dx\right)^{(p-1)/p}\right]\\
&=\mathcal{E}_f(V_0)+\frac{1}{2}\left[\int_\Omega V\,v_0^2\,dx-\left(\int_\Omega|v_0|^{2p/(p-1)}\,dx\right)^{(p-1)/p}\right].
\end{split}
\]
The optimal potential $V_0$ and $v_0$ are linked through \eqref{relazione}, thus by substituting above we get
\[
\begin{split}
\mathcal{E}_f(V)-\mathcal{E}_f(V_0)
&\le\frac{c_1^2}{2}\int_\Omega(V-V_0)\,V_0^{p-1}\,dx=\frac{c_1^2}{2}\,\left[\int_\Omega V\,V_0^{p-1}\,dx-1\right].
\end{split}
\]
From the previous inequality we obtain
\begin{equation}
\label{tutto}
\mathcal{E}_f(V_0)-\mathcal{E}_f(V)\ge \frac{c_1^2}{2}\, \left[1-\left(\int_\Omega |V|^p\, dx\right)^{1/p}\right]+\frac{c_1^2}{2}\,\left[\left(\int_\Omega |V|^p\, dx\right)^{1/p}-\int_\Omega V\, V_0^{p-1}\, dx\right].
\end{equation}
The two terms inside the square brackets are both positive, since $\|V\|_{L^p(\Omega)}\le\|V^{p-1}_0\|_{L^{p'}(\Omega)}=1$. The previous estimate in particular implies that
\begin{equation}
\label{scassamento}
\left(\int_\Omega |V|^p\, dx\right)^{1/p}\ge \frac{1}{2},
\end{equation}
since otherwise we would contradict \eqref{unmezzo}. We now distinguish two cases. 
\vskip.2cm\noindent
{\it Case $p\ge 2$}. By applying \eqref{holderquant1} with the choices
\[
f=\frac{V}{\|V\|_{L^p(\Omega)}},\qquad g=V_0^{p-1},\qquad q=p\quad \mbox{ and }\quad q'=p', 
\] 
we get
\begin{equation}
\label{duepezzoni}
\begin{split}
\left(\int_\Omega |V|^p\, dx\right)^{1/p}&-\int_\Omega V\, V_0^{p-1}\, dx\ge \frac{p'-1}{8}\, \left\|\frac{|V|^{p-2}\, V}{\|V\|^{p-1}_{L^p(\Omega)}}-V_0^{p-1}\right\|^2_{L^{p'}(\Omega)},
\end{split}
\end{equation}
where we used \eqref{scassamento} to estimate the $L^p$ norm of $V$ from below.
We now observe that by the triangle inequality and convexity of $t\mapsto t^{2}$, we get
\[
\begin{split}
\left\|\frac{|V|^{p-2}\, V}{\|V\|^{p-1}_{L^p(\Omega)}}-V_0^{p-1}\right\|^2_{L^{p'}(\Omega)}&\ge \frac{1}{2}\,\left\||V|^{p-2}\, V-V_0^{p-1}\right\|^2_{L^{p'}(\Omega)}-\left\|\frac{|V|^{p-2}\,V}{\|V\|^{p-1}_{L^p(\Omega)}}-|V|^{p-2}\, V\right\|^2_{L^{p'}(\Omega)}\\
&=\frac{1}{2}\, \left\||V|^{p-2}\, V-V_0^{p-1}\right\|^2_{L^{p'}(\Omega)}-\left|1-\|V\|^{p-1}_{L^p(\Omega)}\right|^2\\
&\ge \frac{1}{2}\, \left\||V|^{p-2}\, V-V_0^{p-1}\right\|^2_{L^{p'}(\Omega)}-(p-1)^2\,\left(\frac{2}{c_1^2}\right)^2\, \Big(\mathcal{E}_f(V_0)-\mathcal{E}_f(V_0)\Big)^2,
\end{split}
\]
where we used that for $p\ge 2$ 
\[
1-t^{p-1}\le (p-1)\,(1-t),\qquad \mbox{ for every } 0\le t\le 1,
\]
and \eqref{tutto} in the last inequality. By inserting this information in \eqref{duepezzoni}, combining with \eqref{tutto} and using that $\mathcal{E}_f(V_0)-\mathcal{E}_f(V)\le 1$, we end up with \eqref{stab}. 
\vskip.2cm\noindent
{\it Case $1<p<2$.} By applying \eqref{holderquant1} this time with the choices
\[
f=V_0^{p-1},\qquad g=\frac{V}{\|V\|_{L^p(\Omega)}},\qquad q=p'\quad \mbox{ and }\quad q'=p, 
\] 
we get
\begin{equation}
\label{duepezzonibis}
\begin{split}
\left(\int_\Omega |V|^p\, dx\right)^{1/p}&-\int_\Omega V\, V_0^{p-1}\, dx\ge \frac{p-1}{8}\, \left\|\frac{V}{\|V\|_{L^p(\Omega)}}-V_0\right\|^2_{L^p(\Omega)},
\end{split}
\end{equation}
where we used again \eqref{scassamento}. We can estimate the remainder term as before
\[
\begin{split}
\left\|\frac{V}{\|V\|_{L^p(\Omega)}}-V_0\right\|^2_{L^p(\Omega)}&\ge \frac{1}{2}\,\left\|V-V_0\right\|^2_{L^{p}(\Omega)}-\left\|\frac{V}{\|V\|_{L^p(\Omega)}}- V\right\|^2_{L^{p}(\Omega)}\\
&=\frac{1}{2}\,\left\|V-V_0\right\|^2_{L^{p}(\Omega)}-\left|1-\|V\|_{L^p(\Omega)}\right|^2\\
&\ge\frac{1}{2}\,\left\|V-V_0\right\|^2_{L^{p}(\Omega)}-\left(\frac{2}{c_1^2}\right)^2\,\Big(\mathcal{E}_f(V_0)-\mathcal{E}_f(V)\Big)^2,
\end{split}
\]
where we used again \eqref{tutto} in the last inequality.
We can obtain the desired result by combining \eqref{tutto}, \eqref{duepezzonibis} and the previous estimate.
\end{proof}
\begin{oss}
\label{oss:costanti!}
From the proof, we can see that a possible value for $\sigma_M'$ is 
\[
\sigma'_M=\frac{1}{4}\,\min\left\{(p'-1)\,\frac{c_1^{4}}{8\,c_1^{2}+2\,(p-1)},\,1,\,\frac{c_1^2}{4}\right\},\qquad p\ge 2,
\]
while for $\sigma''_M$ we could take
\[
\sigma''_M=\frac{1}{4}\,\min\left\{(p-1)\,\frac{c_1^4}{8\,c_1^2+2\,(p-1)},\,1,\,\frac{c_1^2}{4}\right\},\qquad 1<p<2.
\]
\end{oss}
\begin{oss}
We point out that we could have used \eqref{holderquant2} in place of \eqref{holderquant1}. In this way, one could obtain stability estimates of the type
\begin{equation}
\label{stab'}
\mathcal{E}_f(V_0)-\mathcal{E}_f(V)\ge\widetilde\sigma'_M\, \left\|V-V_0\right\|^p_{L^{p}(\Omega)},\qquad \mbox{ for } p\ge 2,
\end{equation}
and
\[
\mathcal{E}_f(V_0)-\mathcal{E}_f(V)\ge\widetilde\sigma''_M\, \left\||V|^{p-2}\, V-V_0^{p-1}\right\|^{p'}_{L^{p'}(\Omega)},\qquad \mbox{ for } 1<p< 2.
\]
We also notice that {\it these estimates are asymptotically worse} than \eqref{stab} and \eqref{stab2}. Indeed, when $V$ is of the form $V_\varepsilon=V_0+\varepsilon\,\psi$, for $\varepsilon\ll1$ and $\psi\in L^p(\Omega)$, it is not difficult to see that
\[
\|V_\varepsilon-V_0\|_{L^p(\Omega)}\simeq \varepsilon\qquad \mbox{ and }\qquad \left\||V_\varepsilon|^{p-2}\, V_\varepsilon-V_0^{p-1}\right\|_{L^{p'}(\Omega)}\simeq\varepsilon,
\]
so that we have
\[
\begin{array}{rclc}
\big\||V_\varepsilon|^{p-2}\,V_\varepsilon-V_0^{p-1}\big\|^2_{L^{p'}(\Omega)}&\gg&\|V_\varepsilon-V_0\|^p_{L^p(\Omega)}&\mbox{ if }p\ge 2,\\
\\
 \|V_\varepsilon-V_0\|^2_{L^p(\Omega)}&\gg& \big\||V_\varepsilon|^{p-2}\, V_\varepsilon-V_0^{p-1}\big\|^{p'}_{L^{p'}(\Omega)}&\mbox{ if }1<p<2.
\end{array}
\]
\end{oss}

\subsection{Stability of the state functions}
We have the following stability result for the minimization of $G_{p,f}$.
\begin{prop}\label{lm:cazzatina1}
Let $1<p<\infty$ and let $v_0$ be the unique minimizer of $G_{p,f}$ defined by \eqref{G}, then for every $u\in W^{1,2}_0(\Omega)$ we have
\begin{equation}
\label{cazzatina1}
G_{p,f}(u)-G_{p,f}(v_0)\ge \frac{1}{2}\,\|u-v_0\|^2_{W^{1,2}_0(\Omega)}.
\end{equation}
\end{prop}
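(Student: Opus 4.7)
The plan is to exploit the specific structure of $G_{p,f}$: its first term is strictly 2-convex in the $W^{1,2}_0$ norm (being a quadratic form), its second term is merely convex (being the square of a norm), and its third term is linear. Since $v_0$ is a critical point, the first-order variations in $u-v_0$ should cancel, leaving only the quadratic remainder from the Dirichlet part, which is exactly what we want.

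First I would set $q=2p/(p-1)\ge 2$ and dispose of the trivial case $u\notin L^q(\Omega)$ (where $G_{p,f}(u)=+\infty$). Assuming $u\in L^q(\Omega)$, I would write the Euler--Lagrange equation satisfied by $v_0$ as a minimizer of $G_{p,f}$, namely
\[
\int_\Omega \langle \nabla v_0,\nabla\varphi\rangle\,dx + \|v_0\|_{L^q(\Omega)}^{2-q}\int_\Omega |v_0|^{q-2}v_0\,\varphi\,dx = \langle f,\varphi\rangle,
\]
valid for every $\varphi\in W^{1,2}_0(\Omega)\cap L^q(\Omega)$, in particular for $\varphi=u-v_0$.

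Next, the polarization identity yields the exact equality
\[
\tfrac{1}{2}\int_\Omega|\nabla u|^2\,dx = \tfrac{1}{2}\int_\Omega|\nabla v_0|^2\,dx + \int_\Omega\langle\nabla v_0,\nabla(u-v_0)\rangle\,dx + \tfrac{1}{2}\int_\Omega|\nabla(u-v_0)|^2\,dx.
\]
For the $L^q$-norm term, I would invoke convexity of the map $u\mapsto \tfrac{1}{2}\|u\|_{L^q(\Omega)}^2$ (composition of the convex norm with the convex increasing function $t\mapsto t^2/2$ on $[0,\infty)$) together with its Gateaux derivative at $v_0$, obtaining
\[
\tfrac{1}{2}\|u\|_{L^q(\Omega)}^2 \ge \tfrac{1}{2}\|v_0\|_{L^q(\Omega)}^2 + \|v_0\|_{L^q(\Omega)}^{2-q}\int_\Omega|v_0|^{q-2}v_0(u-v_0)\,dx.
\]
Adding the two displays and subtracting $\langle f,u-v_0\rangle$, the Euler--Lagrange identity collapses all first-order terms, leaving precisely $\tfrac12\|u-v_0\|_{W^{1,2}_0(\Omega)}^2$ as the lower bound for $G_{p,f}(u)-G_{p,f}(v_0)$.

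The only delicate point I foresee is handling the derivative of the $L^q$-norm squared cleanly when $v_0\equiv 0$ (which happens precisely when $f\equiv 0$): there the subgradient must be interpreted as $\{0\}$, but in that degenerate case the conclusion is trivial since $G_{p,f}(u)-G_{p,f}(0)\ge \tfrac12\|\nabla u\|_{L^2(\Omega)}^2$ by nonnegativity of the $L^q$-norm term. A minor second subtlety is confirming that $u-v_0$ is indeed an admissible test function for the Euler--Lagrange equation, i.e. that it lies in $W^{1,2}_0(\Omega)\cap L^q(\Omega)$; this is immediate once $u\in L^q(\Omega)$ has been assumed, since $v_0$ belongs to this space by construction.
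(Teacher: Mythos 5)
Your proof is correct and follows the same route as the paper's: expand the Dirichlet term exactly (polarization), bound the $\|\cdot\|_{L^q}^2$ term from below by its first-order Taylor polynomial using convexity, and cancel the first-order terms via the Euler--Lagrange equation for $v_0$. The only cosmetic difference is that the paper parametrizes $u=v_0+d\varphi$ with $\|\varphi\|_{W^{1,2}_0}=1$ while you work directly with $u-v_0$; your remark on the degenerate case $v_0\equiv 0$ is a nice extra but not essential.
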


\begin{proof}
We first observe that if $u\not\in L^{2p/(p-1)}(\Omega)$, then $G_{p,f}(u)=+\infty$ and \eqref{cazzatina1} trivially holds. Thus, let us take $u\in W^{1,2}_0(\Omega)\cap L^{2p/(p-1)}(\Omega)$ with $u\not=v_0$ and set
\[
d=\|u-v_0\|_{W^{1,2}_0(\Omega)}\qquad \mbox{ and }\qquad \varphi=\frac{u-v_0}{d}.
\]
Then $u$ can be written as $u=v_0+d\,\varphi$ and $\varphi$ has unitary norm in $W^{1,2}_0(\Omega)$. We then get
\[
\begin{split}
G_{p,f}(u)-G_{p,f}(v_0)&\ge d\left[\int_\Omega\langle\nabla v_0,\nabla \varphi\rangle\,dx+\left(\int_\Omega|v_0|^{2p/(p-1)}\,dx\right)^{-1/p}\int_\Omega|v_0|^{2/(p-1)}\,v_0\,\varphi\,dx-\langle f,\varphi\rangle\right]\\
&+\frac{d^2}{2}\int_\Omega|\nabla\varphi|^2\,dx,
\end{split}
\]
where we used the convexity of the $C^1$ map
\[
\Phi(s)=\|v_0+s\,\varphi\|^2_{L^{2p/(p-1)}(\Omega)}=\left(\int_\Omega |v_0+s\,\varphi|^{2p/(p-1)}\,dx\right)^{(p-1)/p},\qquad s\in\mathbb{R},
\]
so that
\[
\Phi(d)\ge\Phi(0)+\Phi'(0)d.
\]
It is now sufficient to observe that 
\[
\int_\Omega\langle\nabla v_0,\nabla\varphi\rangle\,dx+\left(\int_\Omega |v_0|^{2p/(p-1)}\,dx\right)^{-1/p}\int_\Omega|v_0|^{2/(p-1)}\,v_0\,\varphi\,dx-\langle f,\varphi\rangle=0,
\]
by minimality of $v_0$, thus we directly get \eqref{cazzatina}.
\end{proof}
As a consequence of the previous result, we get that if $(u,V)$ is almost realizing the equality in \eqref{holders}, then $u$ is near to the optimizer $v_0$ in the $W^{1,2}_0$ norm.
\begin{coro}
Let $V$ be an admissible potential for \eqref{maximization} and $u$ a corresponding energy function. Then
\begin{equation}\label{stab_weak}
\|u-v_0\|^2_{W^{1,2}_0(\Omega)}\le\left[\left(\int_\Omega|u|^{2p/(p-1)}\,dx\right)^{(p-1)/p}-\int_\Omega V\,u^2\,dx\right].
\end{equation}
\end{coro}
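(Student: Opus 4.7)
The plan is to use Proposition \ref{lm:cazzatina1} together with the fact that the optimal energy $\mathcal{E}_f(V_0)$ and the infimum of $G_{p,f}$ coincide, combined with a direct comparison of $G_{p,f}(u)$ and $\mathcal{E}_f(V)$ on the state function $u=u_V$.

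First, by Proposition \ref{lm:cazzatina1} applied to the admissible competitor $u\in W^{1,2}_0(\Omega)$, we have
\[
G_{p,f}(u)-G_{p,f}(v_0)\ge \tfrac{1}{2}\,\|u-v_0\|^2_{W^{1,2}_0(\Omega)},
\]
and by \eqref{gagliardo} we can replace $G_{p,f}(v_0)$ by $\mathcal{E}_f(V_0)$.

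Next, since $u$ is the minimizer associated to $V$, we have the identity
\[
\mathcal{E}_f(V)=\frac{1}{2}\int_\Omega|\nabla u|^2\,dx+\frac{1}{2}\int_\Omega V\,u^2\,dx-\langle f,u\rangle,
\]
so subtracting this from the definition \eqref{G} of $G_{p,f}(u)$ yields directly
\[
G_{p,f}(u)-\mathcal{E}_f(V)=\frac{1}{2}\left[\left(\int_\Omega|u|^{2p/(p-1)}\,dx\right)^{(p-1)/p}-\int_\Omega V\,u^2\,dx\right],
\]
an identity which in particular shows the bracketed term is nonnegative (this is just H\"older's inequality \eqref{holders}, evaluated against $V$ with $\|V\|_{L^p}\le 1$).

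Finally, the optimality of $V_0$ in the maximization problem \eqref{maximization} gives $\mathcal{E}_f(V)\le \mathcal{E}_f(V_0)$, hence
\[
G_{p,f}(u)-\mathcal{E}_f(V_0)\le G_{p,f}(u)-\mathcal{E}_f(V)=\frac{1}{2}\left[\left(\int_\Omega|u|^{2p/(p-1)}\,dx\right)^{(p-1)/p}-\int_\Omega V\,u^2\,dx\right].
\]
Chaining this with the inequality from Proposition \ref{lm:cazzatina1} and multiplying by $2$ gives \eqref{stab_weak}. There is no real obstacle here: the key observation is simply the chain
\[
\tfrac{1}{2}\|u-v_0\|^2_{W^{1,2}_0(\Omega)}\le G_{p,f}(u)-\mathcal{E}_f(V_0)\le G_{p,f}(u)-\mathcal{E}_f(V),
\]
and the last quantity is exactly half of the right-hand side of \eqref{stab_weak}.
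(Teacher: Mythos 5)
Your argument is correct and is essentially the paper's own proof: both rest on Proposition \ref{lm:cazzatina1} combined with the identity $G_{p,f}(u)-\mathcal{E}_f(V)=\tfrac{1}{2}\big[(\int_\Omega|u|^{2p/(p-1)})^{(p-1)/p}-\int_\Omega Vu^2\big]$. The only cosmetic difference is that you reach $\mathcal{E}_f(V)\le G_{p,f}(v_0)$ by passing through $\mathcal{E}_f(V)\le\mathcal{E}_f(V_0)=G_{p,f}(v_0)$ via \eqref{gagliardo}, while the paper invokes the direct bound $\mathcal{E}_f(V)\le G_{p,f}(v)$ with $v=v_0$; these are the same inequality.
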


\begin{proof}
We already observed that
\[
\mathcal{E}_f(V)\le G_{p,f}(v),\qquad\mbox{ for every }v\in W^{1,2}_0(\Omega).
\]
By taking the energy function $v_0$ corresponding to $V_0$, we get
\[
\mathcal{E}_{f}(V)\le G_{p,f}(v_0)\le G_{p,f}(u)-\frac{1}{2}\|u-v_0\|^2_{W^{1,2}_0(\Omega)},
\]
where we used \eqref{cazzatina1}. Thus we have
\[
\frac{1}{2}\|u-v_0\|^2_{W^{1,2}_0(\Omega)}\le G_{p,f}(u)-\mathcal{E}_f(V)=\frac{1}{2}\left[\left(\int_\Omega|u|^{2p/(p-1)}\,dx\right)^{(p-1)/p}-\int_\Omega Vu^2\,dx\right],
\]
which concludes the proof.
\end{proof}

In general, for an admissible potential $V$ the corresponding energy function is not in $L^{2p/(p-1)}(\Omega)$. When this is the case, we can infer stability of the energy functions as well.
\begin{prop}
Let $V$ be admissible in \eqref{maximization}. If a corresponding energy function $u$ belongs to $L^{2p/(p-1)}(\Omega)$, then
\begin{equation}\label{functions_max}
\Big(\mathcal{E}_f(V_0)-\mathcal{E}_f(V)\Big)^\frac{1}{\vartheta(p)}\,\|u-v_0\|_{L^{2p/(p-1)}(\Omega)}\ge c\,\|u-v_0\|^{2}_{W^{1,2}_0(\Omega)},
\end{equation}
where $\vartheta(p)=\max\{2,\,p\}$ and $c>0$ is a constant depending only on $c_1,\,p$ and $\|V_-\|_{L^{N/2}(\Omega)}$.
\end{prop}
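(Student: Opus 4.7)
The plan is to bypass the convex-functional route of Proposition \ref{lm:cazzatina1} (which gave \eqref{stab_weak} but left an awkward $c_u^2-\int Vu^2$ on the right-hand side) and instead to compare directly the Euler--Lagrange equations for $u$ and $v_0$, namely $-\Delta u+Vu=f$ and $-\Delta v_0+V_0 v_0=f$. Subtracting them, rewriting $Vu-V_0 v_0=V(u-v_0)+(V-V_0)v_0$, and testing against $\varphi=u-v_0$ itself should produce the identity
\[
\|u-v_0\|^2_{W^{1,2}_0(\Omega)}+\int_\Omega V(u-v_0)^2\,dx=\int_\Omega (V_0-V)\,v_0\,(u-v_0)\,dx.
\]
Before doing so I would verify that $u-v_0\in L^2(\Omega;V)\cap L^2(\Omega;V_0)$ so that it is a legitimate test function in \eqref{equazioni}; this follows by H\"older from the hypothesis $u,v_0\in L^{2p/(p-1)}(\Omega)$ together with $V,V_0\in L^p(\Omega)$, exactly as in Remark \ref{oss:lipschitz}.

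I would then absorb the quadratic $V$-term on the left via the spectral condition embedded in the class $\mathcal{V}$: repeating the computation \eqref{ultimora} gives $\int_\Omega V(u-v_0)^2\,dx\ge -T_N^{-1}\|V_-\|_{L^{N/2}(\Omega)}\|u-v_0\|^2_{W^{1,2}_0(\Omega)}$, so that
\[
\Bigl(1-\tfrac{\|V_-\|_{L^{N/2}(\Omega)}}{T_N}\Bigr)\|u-v_0\|^2_{W^{1,2}_0(\Omega)}\le \int_\Omega (V_0-V)\,v_0\,(u-v_0)\,dx.
\]
A triple H\"older with the conjugate exponents $p,\ 2p/(p-1),\ 2p/(p-1)$ bounds the right-hand side by $c_1\,\|V-V_0\|_{L^p(\Omega)}\,\|u-v_0\|_{L^{2p/(p-1)}(\Omega)}$. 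Finally I would plug in the $L^p$ stability of the maximizer: for $p\ge 2$, the companion inequality \eqref{stab'} of the Remark following Theorem \ref{teo:max_stab} gives $\|V-V_0\|_{L^p(\Omega)}\le C\,(\mathcal{E}_f(V_0)-\mathcal{E}_f(V))^{1/p}$; for $1<p<2$, the estimate \eqref{stab2} of Theorem \ref{teo:max_stab} gives $\|V-V_0\|_{L^p(\Omega)}\le C\,(\mathcal{E}_f(V_0)-\mathcal{E}_f(V))^{1/2}$. In either regime $\|V-V_0\|_{L^p(\Omega)}\le C\,(\mathcal{E}_f(V_0)-\mathcal{E}_f(V))^{1/\vartheta(p)}$ with $\vartheta(p)=\max\{2,p\}$ and $C=C(p,c_1)$, and substituting into the previous display produces \eqref{functions_max}.

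The main obstacle here is conceptual rather than technical. Starting from the previous corollary \eqref{stab_weak} is the ``natural'' first attempt, but $c_u^2-\int Vu^2$ cannot be paired with $\|u-v_0\|_{L^{2p/(p-1)}}$ cleanly because $c_u$ is not a priori bounded in terms of the constants $c_1$, $p$, $\|V_-\|_{L^{N/2}}$ the statement allows. Abandoning this route in favour of subtracting the two PDEs produces a right-hand side with the ready-made structure $(V_0-V)\,v_0\,(u-v_0)$, tailor-made for the triple H\"older above. A minor secondary point is that in the regime $p\ge 2$ Theorem \ref{teo:max_stab} natively controls $\bigl\||V|^{p-2}V-V_0^{p-1}\bigr\|_{L^{p'}}$ rather than $\|V-V_0\|_{L^p}$, which is why one must invoke the alternative bound \eqref{stab'} of the subsequent Remark to recover the exponent $1/\vartheta(p)$.
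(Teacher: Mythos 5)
Your proposal is correct and follows essentially the same route as the paper's proof: subtract the two Euler--Lagrange equations to obtain the identity for $\psi=u-v_0$, test against $\psi$ itself, absorb the $\int V\psi^2$ term via \eqref{ultimora}, apply the triple H\"older estimate, and finally invoke \eqref{stab'} (for $p\ge 2$) or \eqref{stab2} (for $1<p<2$) to convert $\|V-V_0\|_{L^p(\Omega)}$ into $(\mathcal{E}_f(V_0)-\mathcal{E}_f(V))^{1/\vartheta(p)}$. The paper's argument is exactly this, so no gaps.
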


\begin{proof}
We first observe that since $u\in L^{2p/(p-1)}(\Omega)$ and $u_0\in L^{2\,p/(p-1)}(\Omega)$ as well, we have 
\[
\psi:=u-v_0\in W^{1,2}_0(\Omega)\cap L^2(\Omega;V).
\]
The function $\psi$ verifies
\[
\int_\Omega\langle\nabla\psi,\nabla\varphi\rangle\,dx+\int_\Omega (V-V_0)\,v_0\,\varphi\,dx+\int_\Omega V\,\psi\,\varphi\,dx=0,
\]
for every $\varphi\in W^{1,2}_0(\Omega)$. By using $\psi$ itself as a test function, we get
\[
\begin{split}
\int_\Omega|\nabla\psi|^2\,dx+\int_\Omega V\,\psi^2\,dx&\le\int_\Omega|V-V_0|\,|v_0|\,|\psi|\,dx\\
&\le\left(\int_\Omega|V-V_0|^p\,dx\right)^{1/p}\left(\int_\Omega \big(|v_0|\,|\psi|\big)^{p/(p-1)}\,dx\right)^{(p-1)/p}\\
&\le\|V-V_0\|_{L^p(\Omega)}\,\|v_0\|_{L^{2p/(p-1)}(\Omega)}\,\|\psi\|_{L^{2p/(p-1)}(\Omega)}.
\end{split}
\]
By noticing that from \eqref{ultimora}
\[
\int_\Omega|\nabla\psi|^2\,dx+\int_\Omega V\psi^2\,dx\ge \left(1-\frac{\|V_-\|_{L^{N/2}(\Omega)}}{T_N}\right)\,\int_\Omega |\nabla \psi|^2\, dx,
\]
and recalling that $\psi=u-v_0$, we get
\[
\left(1-\frac{\|V_-\|_{L^{N/2}(\Omega)}}{T_N}\right)\,\|u-v_0\|^2_{W^{1,2}_0(\Omega)}\le c_1\,\|V-V_0\|_{L^p(\Omega)}\, \|u-v_0\|_{L^{2p/(p-1)}(\Omega)}.
\]
Appealing to \eqref{stab2} (for $1<p<2$) or to \eqref{stab'} (for $p\ge 2$), we then get the conclusion.
\end{proof}

\begin{oss}\label{remstab}
Observe that for $p=N/2$, we have $2p/(p-1)=2^*$ and \eqref{functions_max} simply becomes
\[
\Big(\mathcal{E}_f(V_0)-\mathcal{E}_f(V)\Big)^\frac{1}{\vartheta(p)}\ge c\,\|u-v_0\|_{W^{1,2}_0(\Omega)},
\]
by Sobolev inequality, possibly with a different constant $c>0$. When $\Omega$ has a finite measure or is a waveguide and $p\ge N/2$, by Proposition \ref{prop:embed} we can always assure that the energy function $u$ belongs to $L^{2p/(p-1)}(\Omega)$ and thus we have a similar stability estimate in these cases as well.
\end{oss}

\section{Minimization problems}
\label{sec:min}

In this section we consider, for a fixed $p>0$, the minimization problem
\begin{equation}\label{minimization}
\inf\left\{\mathcal{E}_f(V)\ :\ V\ge0,\ \int_\Omega \frac{1}{V^p}\,dx\le 1\right\}.
\end{equation}
\begin{oss}
Observe that this time, it is not clear whether the minimization problem on $\mathcal{V}$ without sign hypothesis, i.e.
\[
\inf_{V\in\mathcal{V}}\left\{\mathcal{E}_f(V)\ :\ \int_\Omega \frac{1}{|V|^p}\,dx\le 1\right\}.
\]
is well-posed or not, since it may happen that no admissible $V\le 0$ exist (for example if $\Omega$ is unbounded). If an optimal potential $W$ exists for the previous problem, this should be such that $W_+\not\equiv0$ and $W_-\not \equiv 0$. This seems to be an interesting issue, which we leave for future research. 
\end{oss}
We collect a couple of technical results which are needed in the sequel.
\begin{lm}
Let $V\ge0$ be such that $V^{-1}\in L^p(\Omega)$. Then we have
\begin{equation}\label{stimaLV}
\left(\int_\Omega|u|^{2p/(p+1)}\,dx\right)^{(p+1)/p}\le\left(\int_\Omega Vu^2\,dx\right)\left(\int_\Omega\frac{1}{V^p}\,dx\right)^{1/p},\qquad \mbox{ for every }u\in C^\infty_0(\Omega).
\end{equation}
In particular we have the continuous embedding $L^2(\Omega;V)\subset L^{2p/(p+1)}(\Omega)$.
\end{lm}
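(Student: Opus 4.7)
The plan is to derive the inequality as an application of Hölder's inequality, exploiting the algebraic identity
\[
|u|^{2p/(p+1)} = \bigl(V\,u^{2}\bigr)^{p/(p+1)} \cdot V^{-p/(p+1)}.
\]
I would choose the conjugate exponents $q_{1} = (p+1)/p$ and $q_{2} = p+1$, which satisfy $1/q_{1} + 1/q_{2} = 1$ by construction, and apply Hölder's inequality to the two factors above.

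After this, the first factor raised to $q_{1}$ gives exactly $V\,u^{2}$, while the second factor raised to $q_{2}$ gives $V^{-p}$. Both integrals on the right-hand side are well-defined: $\int_\Omega V^{-p}\,dx$ is finite by hypothesis, and for $u \in C^\infty_{0}(\Omega)$ the quantity $\int_\Omega V u^{2}\,dx$ is finite as well since $V$ is locally integrable in the bulk of $\Omega$ (it suffices for $V$ to be almost everywhere finite on the compact support of $u$, which follows from $V^{-1}\in L^{p}$). Explicitly, Hölder yields
\[
\int_\Omega |u|^{2p/(p+1)}\,dx \le \left(\int_\Omega V\,u^{2}\,dx\right)^{p/(p+1)}\left(\int_\Omega V^{-p}\,dx\right)^{1/(p+1)},
\]
and raising both sides to the power $(p+1)/p$ gives precisely \eqref{stimaLV}.

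For the last assertion, if $u \in L^{2}(\Omega;V)$ then by density one can approximate $u$ by test functions and pass to the limit in \eqref{stimaLV} (or more directly, argue by truncation and cutoff since $V\ge 0$), obtaining
\[
\|u\|_{L^{2p/(p+1)}(\Omega)}^{2} \le \|V^{-1}\|_{L^{p}(\Omega)}\, \int_\Omega V\,u^{2}\,dx,
\]
which is exactly the continuity of the embedding $L^{2}(\Omega;V)\hookrightarrow L^{2p/(p+1)}(\Omega)$. There is no real obstacle here; the only mild subtlety is in locating the correct conjugate pair $(q_{1},q_{2})$ so that both resulting integrals match $\int V u^{2}$ and $\int V^{-p}$, but the identity $2p/(p+1) = p/(p+1) \cdot 2$ together with the constraint $q_{2}\cdot p/(p+1) = p$ forces $q_{2} = p+1$ and hence $q_{1} = (p+1)/p$ with no freedom.
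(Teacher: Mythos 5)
Your proof is correct and uses exactly the same decomposition and Hölder exponents as the paper's proof. One small slip: the claim that $\int_\Omega V u^2\,dx$ is finite for $u\in C^\infty_0(\Omega)$ merely because $V$ is a.e.\ finite on the support of $u$ is wrong (e.g.\ $V(x)=1/|x|$ near an interior point of the support), but this does not affect the result since the inequality \eqref{stimaLV} is trivially true when the right-hand side is $+\infty$, which is exactly how the paper dispatches that case.
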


\begin{proof}
If $u\not\in L^2(\Omega;V)$, there is nothing to prove. So let us assume that the first integral in the right-hand side of \eqref{stimaLV} is finite. By H\"older inequality with exponents $q={(p+1)/p}$ and $q'=p+1$ we have
\[
\begin{split}
\int_\Omega|u|^{2p/(p+1)}\,dx
&=\int_\Omega|u|^{2p/(p+1)}\frac{V^{p/(p+1)}}{V^{p/(p+1)}}\,dx\le\left(\int_\Omega Vu^2\,dx\right)^{p/(p+1)}\left(\int_\Omega\frac{1}{V^p}\,dx\right)^{1/(p+1)},
\end{split}
\]
which concludes the proof.
\end{proof}

\begin{oss}\label{oss:dentro}
By standard interpolation in Lebesgue spaces, under the hypothesis of the previous result we have the continuous embedding 
\[
L^2(\Omega;V)\subset L^s(\Omega),
\]
for every $2p/(p+1)\le s\le 2^*$. In particular $L^2(\Omega;V)$ is embedded into $L^2(\Omega)$, since $2p/(p+1)$ is always strictly less than $2$. It is then not difficult to show that the operator $-\Delta +V$ has a discrete spectrum on $L^2(\Omega)$.
\end{oss}
The following energy estimate will be needed in the sequel. 
\begin{lm}
Let $f\in W^{-1,2}(\Omega)$, $V$ be an admissible potential for \eqref{minimization} and $u$ its energy function. Then we have
\begin{equation}\label{energyestimate2}
\left(\int_\Omega|u|^{2p/(p+1)}\,dx\right)^{(p+1)/p}\le\frac{1}{2}\,\|f\|^2_{W^{-1,2}(\Omega)}.
\end{equation}
\end{lm}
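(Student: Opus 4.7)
The plan is to chain the previously established weighted inequality \eqref{stimaLV} with a standard coercivity estimate for the Euler--Lagrange equation. Since $V$ is admissible for \eqref{minimization}, i.e. $\int_\Omega V^{-p}\,dx\le 1$, inequality \eqref{stimaLV} immediately reduces to
\[
\left(\int_\Omega|u|^{2p/(p+1)}\,dx\right)^{(p+1)/p}\le\int_\Omega V\,u^2\,dx,
\]
so the whole problem collapses to the \emph{a priori} bound $\int_\Omega V\,u^2\,dx\le\frac12\|f\|^2_{W^{-1,2}(\Omega)}$.

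To obtain such a bound, I would test the Euler--Lagrange equation satisfied by the energy function $u$ against $u$ itself. This is legitimate because $u\in L^2(\Omega;V)$: indeed $V\ge 0$ forces $\int_\Omega V\,u^2\,dx<+\infty$ at a minimizer of the energy functional (otherwise the functional would be $+\infty$), and the proof of the existence proposition in Section \ref{sec:prel} established that the resulting variational identity holds for every test function in $W^{1,2}_0(\Omega)\cap L^2(\Omega;V)$. Testing with $\varphi=u$ then yields
\[
\int_\Omega|\nabla u|^2\,dx+\int_\Omega V\,u^2\,dx=\langle f,u\rangle.
\]

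Next, I would apply Young's inequality in the form $\langle f,u\rangle\le\tfrac12\|f\|^2_{W^{-1,2}(\Omega)}+\tfrac12\|u\|^2_{W^{1,2}_0(\Omega)}$ and absorb the gradient term into the left-hand side, obtaining
\[
\frac12\int_\Omega|\nabla u|^2\,dx+\int_\Omega V\,u^2\,dx\le\frac12\,\|f\|^2_{W^{-1,2}(\Omega)}.
\]
Since $V\ge 0$, both terms on the left are nonnegative, so discarding the Dirichlet term gives the desired estimate $\int_\Omega V\,u^2\,dx\le\tfrac12\|f\|^2_{W^{-1,2}(\Omega)}$, which combined with the reduction above yields \eqref{energyestimate2}.

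There is no real obstacle here: this is the classical ``coercivity plus Young'' trick, and the mild subtlety of using $u$ as a test function is handled by the admissibility condition together with $V\ge 0$. The slightly delicate point worth double-checking is the sharp constant $\tfrac12$ on the right-hand side, which forces the particular choice of Young splitting $\tfrac12\|f\|^2+\tfrac12\|u\|^2$ (rather than e.g. the weaker $\|f\|\,\|u\|$ bound that would only give $\|f\|^2$).
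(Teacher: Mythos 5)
Your proof is correct and follows essentially the same route as the paper: test the Euler--Lagrange equation with $u$, apply Young's inequality with the $\tfrac12$--$\tfrac12$ splitting to absorb the gradient term, discard the nonnegative Dirichlet integral to bound $\int_\Omega V\,u^2\,dx$, and finish with \eqref{stimaLV}.
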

\begin{proof}
By using the equation and Young ienquality, we have
\[
\int_\Omega |\nabla u|^2\, dx+\int_\Omega V\, u^2\, dx=\langle f,u\rangle\le \frac{1}{2}\, \|f\|_{W^{-1,2}(\Omega)}^2+\frac{1}{2}\,\int_\Omega |\nabla u|^2\, dx.
\]
From the previous we obtain
\[
\int_\Omega V\, u^2\, dx\le \frac{1}{2}\, \|f\|_{W^{-1,2}(\Omega)}^2,
\]
then it is sufficient to apply \eqref{stimaLV}.
\end{proof}
Let $1<p<\infty$, in what follows we set for simplicity
\[
J_{p,f}(u)=\frac{1}{2}\int_\Omega|\nabla u|^2\,dx+\frac{1}{2}\left(\int_\Omega |u|^{2p/(p+1)}\,dx\right)^{(p+1)/p}-\langle f,u\rangle\qquad\forall u\in W^{1,2}_0(\Omega),
\]
where it is intended that $J_{p,f}(u)=+\infty$ if $u\not\in L^{2p/(p+1)}(\Omega)$. Again, it is not difficult to see that $J_{p,f}$ admits a unique minimizer.
We recall the following result from \cite{BGRV}.

\begin{prop}\label{prop:minimo}
The problem \eqref{minimization} admits a unique solution $U_0$ of the form
\begin{equation}\label{relazionem}
U_0=\left(\int_\Omega|u_0|^{2p/(p+1)}\,dx\right)^{1/p}|u_{0}|^{-2/(p+1)},
\end{equation}
where $u_0\in W^{1,2}_0(\Omega)\cap L^{2p/(p+1)}(\Omega)$ is the unique minimizer of $J_{p,f}$.
\end{prop}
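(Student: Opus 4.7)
The approach mirrors the maximization argument in Section 3, with the roles of the Hölder inequality \eqref{holders} and the functional $G_{p,f}$ now played by the reverse-type estimate \eqref{stimaLV} and the functional $J_{p,f}$. The plan is: (i) reduce the infimum in \eqref{minimization} to the minimization of $J_{p,f}$; (ii) produce the unique minimizer $u_0$ of $J_{p,f}$; (iii) check that the potential $U_0$ given by \eqref{relazionem} saturates \eqref{stimaLV} paired with $u_0$ and has $u_0$ as its state function; (iv) deduce uniqueness.

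For (i), I would apply \eqref{stimaLV} to any admissible $V \geq 0$ with $\int_\Omega V^{-p}\, dx \leq 1$ to obtain, for every $u \in W^{1,2}_0(\Omega)$,
\[
\frac{1}{2}\int_\Omega |\nabla u|^2\, dx + \frac{1}{2}\int_\Omega V u^2\, dx - \langle f, u\rangle \;\geq\; J_{p,f}(u),
\]
and minimizing over $u$ gives $\mathcal{E}_f(V) \geq \inf_u J_{p,f}(u)$. For (ii), $J_{p,f}$ is coercive on $W^{1,2}_0(\Omega)$ (absorb $\langle f, u\rangle$ via Young), weakly lower semicontinuous (Fatou on the $L^{2p/(p+1)}$ term along a.e.-convergent subsequences) and strictly convex: for $p>1$ the exponent $q := 2p/(p+1)$ lies in $(1,2)$, so $\|\cdot\|_{L^q(\Omega)}^2$ is convex while the Dirichlet term is strictly convex. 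The direct method then produces a unique minimizer $u_0 \in W^{1,2}_0(\Omega) \cap L^{2p/(p+1)}(\Omega)$.

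For (iii), define $U_0$ via \eqref{relazionem}; a direct computation yields $\int_\Omega U_0^{-p}\, dx = 1$, so $U_0$ is admissible. The pair $(U_0, u_0)$ realizes the equality case in the Hölder step used to derive \eqref{stimaLV}, which forces $V u_0^2 \propto V^{-p}$, i.e.\ $V = c\,|u_0|^{-2/(p+1)}$, with the constant $c$ fixed by the normalization $\int V^{-p}\, dx = 1$ to match \eqref{relazionem}; this yields $\int_\Omega U_0 u_0^2\, dx = \bigl(\int_\Omega |u_0|^{2p/(p+1)}\, dx\bigr)^{(p+1)/p}$. The Euler-Lagrange equation for $u_0$ as minimizer of $J_{p,f}$ reads
\[
-\Delta u_0 + \Bigl(\int_\Omega |u_0|^{2p/(p+1)}\,dx\Bigr)^{1/p} |u_0|^{-2/(p+1)}\, u_0 = f,
\]
which is exactly the state equation $-\Delta u_0 + U_0\, u_0 = f$, so $u_0$ is the energy function of $U_0$ and $\mathcal{E}_f(U_0) = J_{p,f}(u_0)$, proving that $U_0$ attains the infimum. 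For (iv), if $V^\star$ is any other admissible minimizer with energy function $u^\star$, the chain
\[
J_{p,f}(u_0) \;\leq\; J_{p,f}(u^\star) \;\leq\; \mathcal{E}_f(V^\star) \;=\; \mathcal{E}_f(U_0) \;=\; J_{p,f}(u_0)
\]
collapses to equalities, forcing $u^\star = u_0$ by uniqueness in (ii) and then $V^\star = U_0$ by the equality case of Hölder together with the normalization. The main technical obstacle is the handling of the set where $u_0 = 0$ (on which $U_0 = +\infty$): one must verify that the nonlinearity $|u_0|^{-2/(p+1)} u_0 = \mathrm{sgn}(u_0)\,|u_0|^{(p-1)/(p+1)}$ is continuous for $p > 1$, and that admissible test functions for the equation of $U_0$ necessarily vanish on that set, which is built into the requirement $\varphi \in L^2(\Omega; U_0)$.
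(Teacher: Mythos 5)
Your proof is correct and follows essentially the same route as the paper: bound $\mathcal{E}_f(V)$ below by $\min J_{p,f}$ via the reverse H\"older estimate \eqref{stimaLV}, identify $U_0$ through the equality case in H\"older, and verify through the Euler--Lagrange equation that $u_0$ is the state function of $U_0$. The paper's own argument is more terse---it plugs the specific energy function $u_V$ into the identity $\mathcal{E}_f(V)=J_{p,f}(u_V)+\tfrac12\bigl[\int_\Omega V u_V^2\,dx - (\int_\Omega |u_V|^{2p/(p+1)}\,dx)^{(p+1)/p}\bigr]$ and uses positivity of the bracket---but the substance is identical to yours, and the extra detail you supply on strict convexity, the Euler--Lagrange computation, and the uniqueness chain is all sound.
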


\begin{proof}
Let $u$ be the energy function corresponding to $V$, then we have
\[
\mathcal{E}_f(V)=J_{p,f}(u)+\frac{1}{2}\left[\int_\Omega Vu^2-\left(\int_\Omega|u|^{2p/(p+1)}\right)^{(p+1)/p}\right].
\]
By using \eqref{stimaLV} and the minimimality of $u_0$, we get
\[
\mathcal{E}_f(V)\ge J_{p,f}(u_0)=\min_{u\in W^{1,2}_0(\Omega)}J_{p,f}(u).
\]
By appealing again to the equality cases in H\"older inequality, we get the characterization of the optimal potential $V_0$.
\end{proof}

\begin{oss}
As in Remark \ref{oss:poincare}, by applying \eqref{stimaLV} we can infer
\[
\mathcal{E}_f(V)\ge-\frac{1}{2}\,\sup_{u\in W^{1,2}_0(\Omega)\setminus\{0\}}\frac{\langle f,u\rangle^2}{\displaystyle\int_\Omega|\nabla u|^2\,dx+\left(\int_\Omega|u|^{2p/(p+1)}\right)^{(p+1)/p}},
\]
for every $V$ admissible, so that
\[
\inf\left\{\mathcal{E}_f(V)\ :\ V\ge0,\ \int_\Omega V^{-p}\,dx\le 1\right\}=-\frac{1}{2}\sup_{u\in W^{1,2}_0(\Omega)\setminus\{0\}}\frac{\langle f,u\rangle^2}{\displaystyle\int_\Omega|\nabla u|^2\,dx+\left(\int_\Omega|u|^{2p/(p+1)}\right)^{(p+1)/p}}.
\]
Thus we have
\[
\int_\Omega|\nabla u|^2\,dx+\left(\int_\Omega|u|^{2p/(p+1)}\,dx\right)^{(p+1)/p}\ge\frac{1}{2\,|\mathcal{E}_f(U_0)|}\langle u,f\rangle^2,\qquad u\in W^{1,2}_0(\Omega)\cap L^{2p/(p+1)}(\Omega),
\]
with equality if and only if $u$ is proportional to $u_0$.
\end{oss}

\section{Stability for minimization problems}\label{sec:stabmin}

In this section we take $p>1$ and we still denote by $u_0$ the unique minimizer of $J_{p,f}$. We also set
\begin{equation}\label{c_2}
c_2:=\left(\int_\Omega|u_0|^{2p/(p+1)}\,dx\right)^{(p+1)/2p}.
\end{equation}

\subsection{Preliminary results}
We start with a stability result for the minimization of $J_{p,f}$. The proof is the same as that of Lemma \ref{lm:cazzatina1}, thus we omit it.
\begin{prop}\label{prop:cazzatina}
Let $1<p<\infty$ and let $u_0$ be the unique minimizer of $J_{p,f}$. Then for every $u\in W^{1,2}_0(\Omega)$ we have
\begin{equation}\label{cazzatina}
J_{p,f}(u)-J_{p,f}(u_0)\ge\frac{1}{2}\|u-u_0\|^2_{W^{1,2}_0(\Omega)}.
\end{equation}
\end{prop}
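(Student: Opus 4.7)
The authors explicitly tell us that the proof mirrors that of Proposition \ref{lm:cazzatina1}, so the strategy is to repeat the expansion-and-convexity argument with the Lebesgue exponent $2p/(p-1)$ replaced by $q:=2p/(p+1)$. First I would dispose of the trivial case: if $u\notin L^{2p/(p+1)}(\Omega)$, then by definition $J_{p,f}(u)=+\infty$ and there is nothing to prove. So I may assume $u\in W^{1,2}_0(\Omega)\cap L^{2p/(p+1)}(\Omega)$, and further assume $u\neq u_0$. Setting
\[
d:=\|u-u_0\|_{W^{1,2}_0(\Omega)}>0,\qquad \varphi:=\frac{u-u_0}{d},
\]
I get a unit-norm direction $\varphi\in W^{1,2}_0(\Omega)$ and the decomposition $u=u_0+d\,\varphi$.

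Next I would expand $J_{p,f}(u_0+d\,\varphi)$ term by term. The gradient part expands exactly, thanks to $\|\varphi\|_{W^{1,2}_0(\Omega)}=1$, as
\[
\frac{1}{2}\int_\Omega|\nabla(u_0+d\,\varphi)|^2\,dx=\frac{1}{2}\int_\Omega|\nabla u_0|^2\,dx+d\int_\Omega\langle\nabla u_0,\nabla\varphi\rangle\,dx+\frac{d^2}{2}.
\]
For the nonlinear part, observe that since $p>1$ we have $q=2p/(p+1)>1$, so $\|\cdot\|_{L^q(\Omega)}$ is a genuine norm. Consequently the map
\[
\Phi(s):=\|u_0+s\,\varphi\|_{L^{q}(\Omega)}^{2}=\left(\int_\Omega|u_0+s\,\varphi|^{2p/(p+1)}\,dx\right)^{(p+1)/p},\qquad s\in\mathbb{R},
\]
is convex, finite everywhere (because $u_0,\varphi\in L^q(\Omega)$), and $C^1$, so that $\Phi(d)\ge\Phi(0)+\Phi'(0)\,d$.

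Putting the pieces together yields
\[
J_{p,f}(u)-J_{p,f}(u_0)\ge\frac{d^2}{2}+d\,\left[\int_\Omega\langle\nabla u_0,\nabla\varphi\rangle\,dx+\frac{1}{2}\,\Phi'(0)-\langle f,\varphi\rangle\right].
\]
The bracket is precisely $\frac{d}{ds}\big|_{s=0}J_{p,f}(u_0+s\,\varphi)$, which vanishes by the first-order optimality condition for $u_0$ as the unique minimizer of $J_{p,f}$ (applied to the admissible test direction $\varphi\in W^{1,2}_0(\Omega)\cap L^{2p/(p+1)}(\Omega)$). Hence $J_{p,f}(u)-J_{p,f}(u_0)\ge d^2/2$, which is \eqref{cazzatina}.

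The only potential obstacle is justifying that $\Phi$ is indeed differentiable at $s=0$ and that the linear term in $d$ really equals the Gateaux derivative of $J_{p,f}$ at $u_0$; this is standard because $u_0+s\,\varphi\in L^q(\Omega)$ for every $s\in\mathbb{R}$ and one can differentiate under the integral sign using the elementary inequality $||a+tb|^q-|a|^q|\le C(|a|^{q-1}+|b|^{q-1})|b|\,|t|$ for $q\in(1,2)$, which gives a dominated convergence bound. Given that, the rest of the argument is essentially algebraic and carries over verbatim from the proof of Proposition \ref{lm:cazzatina1}.
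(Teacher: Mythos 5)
Your proposal is correct and is exactly the adaptation the paper intends: it states explicitly that the proof of Proposition \ref{prop:cazzatina} "is the same as that of Lemma \ref{lm:cazzatina1}, thus we omit it," and you reproduce that argument with $2p/(p-1)$ replaced by $q=2p/(p+1)$. You also rightly flag and resolve the one genuine new wrinkle — since $1<q<2$ the nonlinear term needs a slightly different justification of convexity and differentiability of $\Phi$ — and your mean-value-theorem bound (valid for $|t|\le 1$, which suffices for the Gateaux derivative) settles it.
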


We also need the following result, asserting that the energy gap controls the difference of the $L^{2\,p/(p+1)}(\Omega)$ norms.
\begin{lm}
\label{lm:corner}
Let $f\in W^{-1,2}(\Omega)$ and let $V$ be an admissible potential for \eqref{minimization}. If we suppose that $\mathcal{E}_f(V)-\mathcal{E}_f(U_0)\le 1$, then
\begin{equation}\label{normaLmerda}
\sqrt{\mathcal{E}_f(V)-\mathcal{E}_f(U_0)}\ge c_3\left|\left(\int_\Omega|u_0|^{2p/(p+1)}\,dx\right)^{(p+1)/p}-\left(\int_\Omega|u|^{2p/(p+1)}\,dx\right)^{(p+1)/p}\right|,
\end{equation}
for a constant $c_3>0$ depending only on $\|f\|_{W^{-1,2}(\Omega)}$.
\end{lm}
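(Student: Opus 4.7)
The plan is to insert the pivot quantity $\int_\Omega V\,u^2\,dx$ and split the difference as
\[
\|u_0\|^2_{L^{2p/(p+1)}(\Omega)} - \|u\|^2_{L^{2p/(p+1)}(\Omega)} = \Bigl(\int_\Omega U_0\,u_0^2\,dx - \int_\Omega V\,u^2\,dx\Bigr) + \Bigl(\int_\Omega V\,u^2\,dx - \|u\|^2_{L^{2p/(p+1)}(\Omega)}\Bigr),
\]
where the identity $\int_\Omega U_0\,u_0^2\,dx = \|u_0\|^2_{L^{2p/(p+1)}(\Omega)}$ follows at once from the explicit formula \eqref{relazionem} for $U_0$ (it is the equality case in \eqref{stimaLV}). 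Each of the two summands will be controlled by an appropriate power of the energy gap $\varepsilon := \mathcal{E}_f(V) - \mathcal{E}_f(U_0) \geq 0$.

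For the second summand, I would use the identity
\[
\mathcal{E}_f(V) = J_{p,f}(u) + \tfrac{1}{2}\Bigl(\int_\Omega V\,u^2\,dx - \|u\|^2_{L^{2p/(p+1)}(\Omega)}\Bigr)
\]
already observed in the proof of Proposition \ref{prop:minimo}. Combined with the minimality $J_{p,f}(u) \geq J_{p,f}(u_0) = \mathcal{E}_f(U_0)$ and the non-negativity of the bracket (which is \eqref{stimaLV}), this yields $0 \leq \int_\Omega V\,u^2\,dx - \|u\|^2_{L^{2p/(p+1)}(\Omega)} \leq 2\varepsilon$. For the first summand, note that by Proposition \ref{prop:minimo} the function $u_0$ is precisely the energy function associated with $U_0$, so both $(V,u)$ and $(U_0,u_0)$ are potential--solution pairs for \eqref{problema} with non-negative potentials. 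Lemma \ref{lm_L2V} (with $C=3$ since $V_- = (U_0)_- = 0$) then gives
\[
\Bigl|\int_\Omega V\,u^2\,dx - \int_\Omega U_0\,u_0^2\,dx\Bigr| \leq 3\,\|f\|_{W^{-1,2}(\Omega)}\,\|u - u_0\|_{W^{1,2}_0(\Omega)}.
\]
To control $\|u - u_0\|_{W^{1,2}_0(\Omega)}$ in turn by $\varepsilon$, I would chain the inequality $\mathcal{E}_f(V) \geq J_{p,f}(u)$ (immediate from \eqref{stimaLV}) with the strong-convexity estimate \eqref{cazzatina} of Proposition \ref{prop:cazzatina}, obtaining $\tfrac{1}{2}\|u - u_0\|^2_{W^{1,2}_0(\Omega)} \leq J_{p,f}(u) - J_{p,f}(u_0) \leq \varepsilon$.

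Putting the two summand estimates together gives
\[
\Bigl|\|u_0\|^2_{L^{2p/(p+1)}(\Omega)} - \|u\|^2_{L^{2p/(p+1)}(\Omega)}\Bigr| \leq 2\,\varepsilon + 3\sqrt{2}\,\|f\|_{W^{-1,2}(\Omega)}\,\sqrt{\varepsilon},
\]
and the standing assumption $\varepsilon \leq 1$ permits bounding $2\varepsilon$ by $2\sqrt{\varepsilon}$, yielding \eqref{normaLmerda} with the explicit value $c_3 = (2 + 3\sqrt{2}\,\|f\|_{W^{-1,2}(\Omega)})^{-1}$. I do not foresee any deep obstacle in the argument; the only delicate point is spotting the right splitting and invoking Lemma \ref{lm_L2V} to transfer the difference of the potential-weighted integrals into a $W^{1,2}_0$ distance. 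This manoeuvre bypasses the fact that on unbounded $\Omega$ one does not have $W^{1,2}_0(\Omega) \hookrightarrow L^{2p/(p+1)}(\Omega)$, so a direct estimate of $\|u - u_0\|_{L^{2p/(p+1)}(\Omega)}$ would not be at our disposal.
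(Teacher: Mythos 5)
Your proof is correct and follows essentially the same route as the paper's: both arguments hinge on the identity $\int_\Omega U_0\,u_0^2\,dx=\|u_0\|^2_{L^{2p/(p+1)}(\Omega)}$ from \eqref{relazionem}, the decomposition \eqref{fundamental}, Lemma \ref{lm_L2V} with $C=3$ to convert the difference of potential-weighted $L^2$ quantities into a $W^{1,2}_0$ distance, and the strong convexity estimate \eqref{cazzatina} to bound $\|u-u_0\|_{W^{1,2}_0(\Omega)}$ by $\sqrt{2\varepsilon}$. The only organizational difference is that you insert the pivot $\int_\Omega V\,u^2\,dx$ and split additively, producing the two-sided bound $\bigl|\|u_0\|^2_{L^{2p/(p+1)}}-\|u\|^2_{L^{2p/(p+1)}}\bigr|\le 2\varepsilon+3\sqrt{2}\|f\|\sqrt{\varepsilon}$ in a single pass, whereas the paper handles the two signs of the difference as two separate inequalities before combining them. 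Your version is marginally cleaner to state; the constants differ from the paper's Remark but are equally valid, and as you note the argument never requires $W^{1,2}_0(\Omega)\hookrightarrow L^{2p/(p+1)}(\Omega)$, exactly as in the paper.
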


\begin{proof}
We first observe that
\begin{equation}\label{fundamental}
\mathcal{E}_f(V)-\mathcal{E}_f(U_0)=\big[J_{p,f}(u)-J_{p,f}(u_0)\big]+\frac{1}{2}\left[\int_\Omega Vu^2\,dx-\left(\int_\Omega|u|^{2p/(p+1)}\,dx\right)^{(p+1)/p}\right],
\end{equation}
and both terms inside the square brackets are positive. In particular we get
\[
\mathcal{E}_f(V)-\mathcal{E}_f(U_0)\ge\frac{1}{2}\left[\int_\Omega Vu^2\,dx-\left(\int_\Omega|u|^{2p/(p+1)}\,dx\right)^{(p+1)/p}\right].
\]
By using the estimate on the weighted $L^2$ norms \eqref{L2V}, we then get
\begin{equation}\label{guasi}
\mathcal{E}_f(V)-\mathcal{E}_f(U_0)\ge\frac{1}{2}\left[\int_\Omega U_0\,u_0^2\,dx-\left(\int_\Omega|u|^{2p/(p+1)}\,dx\right)^{(p+1)/p}\right]-\frac32\|f\|_{W^{-1,2}(\Omega)}\|u-u_0\|_{W^{1,2}(\Omega)}.
\end{equation}
We now use that by \eqref{relazionem} $U_0$ and $u_0$ are linked through
\[
U_0=\left(\int_\Omega|u_0|^{2p/(p+1)}\,dx\right)^{1/p}|u_0|^{-2/(p+1)},
\]
so that
\begin{equation}\label{ugualeu_0}
\int_\Omega U_0\,u_0^2\,dx=\left(\int_\Omega|u_0|^{2p/(p+1)}\,dx\right)^{(p+1)/p}.
\end{equation}
If we use this in \eqref{guasi}, we end up with
\[
\begin{split}
\mathcal{E}_f(V)-\mathcal{E}_f(U_0)&+\frac{3}{2}\,\|f\|_{W^{-1,2}(\Omega)}\,\|u-u_0\|_{W^{1,2}(\Omega)}\\
&\ge\frac{1}{2}\left[\left(\int_\Omega|u_0|^{2p/(p+1)}\,dx\right)^{(p+1)/p}-\left(\int_\Omega|u|^{2p/(p+1)}\,dx\right)^{(p+1)/p}\right].
\end{split}
\]
By recalling \eqref{cazzatina}, \eqref{fundamental} and using the hypothesis $\mathcal{E}_f(V)-\mathcal{E}_f(U_0)\le1$, we then get
\[
\sqrt{\mathcal{E}_f(V)-\mathcal{E}_f(U_0)}\ge\frac{1}{C}\left[\left(\int_\Omega|u_0|^{2p/(p+1)}\,dx\right)^{(p+1)/p}-\left(\int_\Omega|u|^{2p/(p+1)}\,dx\right)^{(p+1)/p}\right].
\]
On the other hand, we have
\[
\begin{split}
\left(\int_\Omega|u|^{2p/(p+1)}\,dx\right)^{(p+1)/p}-\left(\int_\Omega|u_0|^{2p/(p+1)}\,dx\right)^{(p+1)/p}\le\int_\Omega Vu^2\,dx-\int_\Omega U_0\,u_0^2\,dx,
\end{split}
\]
where we used \eqref{stimaLV} and \eqref{ugualeu_0}. If we now apply \eqref{L2V}, we get 
\[
\left[\left(\int_\Omega|u|^{2p/(p+1)}\,dx\right)^{(p+1)/p}-\left(\int_\Omega|u_0|^{2p/(p+1)}\,dx\right)^{(p+1)/p}\right]\le 3\, \|f\|_{W^{-1,2}(\Omega)}\,\|u-u_0\|_{W^{1,2}_0(\Omega)},
\]
and again we can conclude thanks to \eqref{fundamental} and \eqref{cazzatina}.
\end{proof}

\begin{oss}
A closer inspection of the previous proof ensures that we can take
\[
c_3:=\min\left\{\frac{\sqrt{2}}{\sqrt{2}+3\,\|f\|_{W^{-1,2}(\Omega)}},\,\frac{1}{3\sqrt{2}\,\|f\|_{W^{-1,2}(\Omega)}}\right\}.
\]
\end{oss}
The following result guarantees that it is sufficient to prove stability for potentials saturating the constraint $\int_\Omega V^{-p}\le 1$.
\begin{lm}[Reduction Lemma]
\label{lm:reduction}
Let $V$ be admissible in \eqref{minimization} and such that $\int_\Omega V^{-p}\, dx<1$. Let us suppose that its energy function $u$ satisfies
\begin{equation}
\label{beta}
\int_\Omega V\, u^2\, dx\ge \beta>0.
\end{equation}
Then there exists a potential $U\ge 0$ with $\int_\Omega U^{-p}\, dx=1$ such that
\begin{equation}
\label{reduction1}
\mathcal{E}_f(V)-\mathcal{E}_f(U_0)\ge \mathcal{E}_f(U)-\mathcal{E}_f(U_0),
\end{equation} 
and
\begin{equation}
\label{reduction2}
\left\|\frac{1}{V}-\frac{1}{U_0}\right\|_{L^p(\Omega)}\le \left\|\frac{1}{U}-\frac{1}{U_0}\right\|_{L^p(\Omega)}+\frac{2}{\beta}\, \Big(\mathcal{E}_f(V)-\mathcal{E}_f(U_0)\Big).
\end{equation}
\end{lm}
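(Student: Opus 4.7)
The natural construction is simply to rescale $V$ down to saturate the constraint. Set
\[
t := \left(\int_\Omega V^{-p}\,dx\right)^{1/p} < 1, \qquad U := t\, V.
\]
Then $U\geq 0$ and $\|U^{-1}\|_{L^p(\Omega)} = t^{-1}\,\|V^{-1}\|_{L^p(\Omega)} = 1$, so $U$ is admissible with saturated constraint.

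For \eqref{reduction1}, I would argue by pointwise monotonicity of the energy functional: since $U\leq V$ with both nonnegative, for every $w\in W^{1,2}_0(\Omega)$
\[
\tfrac12\!\int_\Omega |\nabla w|^2\,dx+\tfrac12\!\int_\Omega U\,w^2\,dx-\langle f,w\rangle \le \tfrac12\!\int_\Omega |\nabla w|^2\,dx+\tfrac12\!\int_\Omega V\,w^2\,dx-\langle f,w\rangle.
\]
Taking infima yields $\mathcal{E}_f(U)\le \mathcal{E}_f(V)$, which is exactly \eqref{reduction1}.

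For \eqref{reduction2}, I would first observe that $U^{-1} - V^{-1} = (t^{-1}-1)V^{-1}$, hence a direct computation gives
\[
\left\|\frac{1}{V}-\frac{1}{U}\right\|_{L^p(\Omega)}=(t^{-1}-1)\,\|V^{-1}\|_{L^p(\Omega)}=1-t.
\]
The triangle inequality then yields
\[
\left\|\tfrac{1}{V}-\tfrac{1}{U_0}\right\|_{L^p(\Omega)}\le \left\|\tfrac{1}{U}-\tfrac{1}{U_0}\right\|_{L^p(\Omega)}+(1-t),
\]
so it remains to bound $1-t$ by $(2/\beta)(\mathcal{E}_f(V)-\mathcal{E}_f(U_0))$. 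To this end, I would test the variational problem defining $\mathcal{E}_f(U)$ against $u$ itself (the energy function of $V$) and use $U=tV$:
\[
\mathcal{E}_f(U)\le \tfrac12\!\int_\Omega|\nabla u|^2\,dx+\tfrac{t}{2}\!\int_\Omega V\,u^2\,dx-\langle f,u\rangle=\mathcal{E}_f(V)-\tfrac{1-t}{2}\!\int_\Omega V\,u^2\,dx\le \mathcal{E}_f(V)-\tfrac{\beta}{2}(1-t),
\]
using the hypothesis \eqref{beta}. Since $U$ is admissible in \eqref{minimization} we have $\mathcal{E}_f(U)\ge \mathcal{E}_f(U_0)$, hence
\[
1-t \le \frac{2}{\beta}\,\big(\mathcal{E}_f(V)-\mathcal{E}_f(U_0)\big),
\]
and \eqref{reduction2} follows. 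There is no substantial obstacle here; the only delicate point is recognizing that the hypothesis \eqref{beta}, i.e.\ a uniform lower bound on $\int_\Omega V u^2\,dx$, is precisely what is needed to convert the ``amount of slack'' $1-t$ in the constraint into an energy gap. Without such a lower bound (in particular, if $u$ could concentrate where $V$ is large but small in $L^2(\Omega;V)$ norm), the comparison between $V$ and its rescaling would degenerate.
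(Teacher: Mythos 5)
Your proposal is correct and uses the same construction as the paper: set $t=\|V^{-1}\|_{L^p(\Omega)}<1$ (the paper calls it $\lambda$), take $U=tV$, get \eqref{reduction1} by pointwise monotonicity, compute $\|\,V^{-1}-U^{-1}\|_{L^p(\Omega)}=1-t$, and finish with the triangle inequality. The only point where you diverge is in how the constraint slack $1-t$ is bounded by the energy gap. The paper invokes the decomposition \eqref{fundamental} of $\mathcal{E}_f(V)-\mathcal{E}_f(U_0)$ into two nonnegative pieces and then the weighted H\"older inequality \eqref{stimaLV} to obtain
\[
\mathcal{E}_f(V)-\mathcal{E}_f(U_0)\ge \tfrac12\left(\int_\Omega V\,u^2\,dx\right)(1-t).
\]
You instead test the variational problem defining $\mathcal{E}_f(U)$ with $u$ itself, use $U=tV$ and the minimality of $u$ for $V$ to get $\mathcal{E}_f(U)\le\mathcal{E}_f(V)-\tfrac{1-t}{2}\int_\Omega V\,u^2\,dx$, and then use $\mathcal{E}_f(U)\ge\mathcal{E}_f(U_0)$ (which requires noting $U$ is admissible -- you do). Both give exactly the same inequality; yours is a hair more elementary since it bypasses \eqref{fundamental} and \eqref{stimaLV}, at the price of introducing the auxiliary comparison $\mathcal{E}_f(U)\ge\mathcal{E}_f(U_0)$, which the paper's route does not need. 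Either way the constant $2/\beta$ comes out the same, so there is no loss. Your closing remark about the role of \eqref{beta} is accurate and matches the reason the paper imposes it.
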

\begin{proof}
Let $\lambda=\|V^{-1}\|_{L^p(\Omega)}<1$, then we set $U=\lambda\, V$. It is clear that $U<V$, so that $\mathcal{E}_f(U)\le \mathcal{E}_f(V)$ by the definition of the energy and the first property \eqref{reduction1} follows. In order to prove the second, we observe that by \eqref{fundamental}, we have
\[
\begin{split}
\mathcal{E}_f(V)-\mathcal{E}_f(U_0)&\ge \frac{1}{2}\,\left[\int_\Omega V\,u^2\,dx-\left(\int_\Omega|u|^{2p/(p+1)}\,dx\right)^{(p+1)/p}\right]\\
&\ge\frac{1}{2}\, \int_\Omega V\, u^2\, dx\,\left[1-\left(\int_\Omega V^{-p}\, dx\right)^{1/p}\right]=\frac{1}{2}\, \left(\int_\Omega V\, u^2\, dx\right)\, |1-\lambda|,
\end{split}
\]
where we also used \eqref{stimaLV} in the second inequality. By using the hypothesis on $u$ and the definition of $U$, we get
\[
|1-\lambda|\le \frac{2}{\beta}\,\Big(\mathcal{E}_f(V)-\mathcal{E}_f(U_0)\Big),
\]
and since by the triangle inequality
\[
\begin{split}
\left\|\frac{1}{V}-\frac{1}{U_0}\right\|_{L^p(\Omega)}&\le \left\|\frac{1}{U}-\frac{1}{U_0}\right\|_{L^p(\Omega)}+|1-\lambda|,
\end{split}
\]
we get the desired conclusion.
\end{proof}

Finally, the following very simple estimate will be quite useful.

\begin{lm}\label{lm:sospetto}
Let $1<r,s<\infty$ and $g,g_0\in L^r(\Omega)\cap L^s(\Omega)$. Then we have
\begin{equation}
\label{triangolono!}
\|g_0\|_{L^r(\Omega)}\le\|g\|_{L^r(\Omega)}+\|g-g_0\|_{L^s(\Omega)}\,\frac{\left\||g_0|^{r-1}\right\|_{L^{s'}(\Omega)}}{\|g_0\|^{r-1}_{L^r(\Omega)}}.
\end{equation}
\end{lm}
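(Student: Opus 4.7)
The plan is to exploit the elementary decomposition
\[
\|g_0\|_{L^r(\Omega)}^{\,r} \;=\; \int_\Omega |g_0|\cdot |g_0|^{r-1}\,dx,
\]
and then use the triangle inequality $|g_0| \le |g| + |g - g_0|$ pointwise to split the right-hand side into two integrals
\[
\int_\Omega |g|\cdot |g_0|^{r-1}\,dx \;+\; \int_\Omega |g - g_0|\cdot |g_0|^{r-1}\,dx.
\]
The key idea is then to apply H\"older's inequality with \emph{different} pairs of conjugate exponents to each of these two integrals: the first with the natural exponents $r$ and $r'= r/(r-1)$, which produces exactly $\|g\|_{L^r(\Omega)}\cdot \|g_0\|_{L^r(\Omega)}^{r-1}$ (since $(r-1)r' = r$), and the second with the given exponents $s$ and $s'$, which produces $\|g-g_0\|_{L^s(\Omega)}\cdot \bigl\||g_0|^{r-1}\bigr\|_{L^{s'}(\Omega)}$.

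Putting the two bounds together gives
\[
\|g_0\|_{L^r(\Omega)}^{\,r} \;\le\; \|g\|_{L^r(\Omega)}\cdot \|g_0\|_{L^r(\Omega)}^{r-1} + \|g-g_0\|_{L^s(\Omega)}\cdot \bigl\||g_0|^{r-1}\bigr\|_{L^{s'}(\Omega)},
\]
and dividing through by $\|g_0\|_{L^r(\Omega)}^{r-1}$ (which is legitimate, and if it vanishes the inequality is trivial) yields exactly \eqref{triangolono!}.

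There is essentially no obstacle: the only subtlety is the bookkeeping around the two conjugate pairs, namely recognizing that pairing $|g|$ with $|g_0|^{r-1}$ via the exponent $r$ is what makes the factor $\|g_0\|_{L^r(\Omega)}^{r-1}$ cancel on division, while pairing $|g-g_0|$ with $|g_0|^{r-1}$ via $s$ is what produces the asymmetric ratio appearing on the right-hand side of the claimed inequality.
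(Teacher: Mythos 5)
Your proof is correct and is essentially the paper's argument in a slightly more direct dress: the paper first writes the general duality estimate $\left|\int_\Omega g_0\varphi\,dx\right|\le\|g-g_0\|_{L^s}\|\varphi\|_{L^{s'}}+\|g\|_{L^r}\|\varphi\|_{L^{r'}}$ for an arbitrary test function and then specializes to $\varphi=|g_0|^{r-2}g_0$, whereas you go straight to that choice by writing $\|g_0\|_{L^r}^r=\int|g_0|\,|g_0|^{r-1}$ and applying the pointwise triangle inequality before the two separate H\"older estimates. The algebra and the final division by $\|g_0\|_{L^r(\Omega)}^{r-1}$ are identical.
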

\begin{proof}
We can suppose that $|g_0|^{r-1}\in L^{s'}(\Omega)$, otherwise there is nothing to prove. For every $\varphi\in L^{r'}(\Omega)\cap L^{s'}(\Omega)$ we have
\[
\begin{split}
\left|\int_\Omega g_0\,\varphi\,dx\right|
&\le\int_\Omega|g-g_0|\,|\varphi|\,dx+\left|\int_\Omega g\,\varphi\,dx\right|\\
&\le \|g-g_0\|_{L^s(\Omega)}\|\varphi\|_{L^{s'}(\Omega)}+\|g\|_{L^r(\Omega)}\,\|\varphi\|_{L^{r'}(\Omega)}.
\end{split}
\]
If we now choose 
\[
\varphi=|g_0|^{r-2}\,g_0,
\]
and then simplify by $\|g_0\|^{r-1}_{L^r(\Omega)}$ on both sides, we obtain \eqref{triangolono!}.
\end{proof}

\subsection{Stability of the potentials}

The following is the main result of this section, which is proved under the integrability assumption \eqref{ipotesiu0} on $u_0$. For a discussion on this hypothesis, we refer the reader to Remark \ref{oss:ipotesi} below.
\begin{teo}[Stability of minimal potentials]
\label{teo:min_stab}
Let $U_0$ be the optimal potential achieving the minimum in \eqref{minimization}. 
Let us suppose that the optimal function $u_0$ is such that
\begin{equation}
\label{ipotesiu0}
c_4:=\left\||u_0|^{(p-1)/(p+1)}\right\|_{L^{(2^*)'}(\Omega)}<+\infty.
\end{equation}
Then for every positive potential $V$ such that $\|1/V\|_{L^p(\Omega)}\le 1$ we have
\begin{equation}\label{eccolo}
\mathcal{E}_f(V)-\mathcal{E}_f(U_0)\ge\sigma_m\,\left\|\frac{1}{V}-\frac{1}{U_0}\right\|^{2\,p\,(p+1)/(p-1)}_{L^p(\Omega)},
\end{equation}
for a constant $\sigma_m>0$ depending only on $N,p,c_2,c_4$ and $\|f\|_{W^{-1,2}(\Omega)}$ (see Remark \ref{oss:sigma} below).
\end{teo}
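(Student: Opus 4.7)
The strategy rests on the decomposition~\eqref{fundamental},
\[
\mathcal{E}_f(V)-\mathcal{E}_f(U_0) = \bigl[J_{p,f}(u)-J_{p,f}(u_0)\bigr] + \frac{1}{2}\left[\int_\Omega Vu^2\,dx-A^2\right],
\]
with $A=\|u\|_{L^{2p/(p+1)}(\Omega)}$, where both brackets are nonnegative. Proposition~\ref{prop:cazzatina} makes the first bracket control $\|u-u_0\|^2_{W^{1,2}_0(\Omega)}$, while a quantitative H\"older inequality will make the second one control the norm-gap between $V^{-p/(p+1)}$ and the H\"older-optimal density $(V u^2/I)^{1/(p+1)}$, where $I=\int_\Omega Vu^2\,dx$. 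Since the analogous optimal quantities for $(U_0,u_0)$ give $U_0^{-p/(p+1)}=(u_0^2 U_0/I_0)^{1/(p+1)}$, the plan is to combine these two fronts via triangle inequalities, forcing $V^{-1}$ to be close to $U_0^{-1}$ in $L^p(\Omega)$.

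As a preliminary, I may assume $\mathcal{E}_f(V)-\mathcal{E}_f(U_0)\le\eta$ for a small $\eta>0$ to be fixed later: otherwise~\eqref{eccolo} is trivial since $\|V^{-1}-U_0^{-1}\|_{L^p(\Omega)}\le 2$. Next, I apply Lemma~\ref{lm:reduction} to reduce to the saturating case $\int_\Omega V^{-p}\,dx=1$. To verify hypothesis~\eqref{beta} of that lemma, I invoke Lemma~\ref{lm:sospetto} with $g=u$, $g_0=u_0$, $r=2p/(p+1)$ and $s=2^*$: the assumption~\eqref{ipotesiu0} is precisely the finiteness of $\||u_0|^{r-1}\|_{L^{s'}(\Omega)}=c_4$, and Sobolev embedding bounds $\|u-u_0\|_{L^{2^*}(\Omega)}$ by $\|u-u_0\|_{W^{1,2}_0(\Omega)}$. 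Combined with Proposition~\ref{prop:cazzatina}, this yields $\|u\|_{L^{2p/(p+1)}(\Omega)}\ge c_2/2$ once $\eta$ is small enough, and hence by~\eqref{stimaLV} $I\ge c_2^2/4$.

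I then apply the quantitative H\"older inequality~\eqref{holderquant2} with $q=p+1\ge 2$ to the pair
\[
F=V^{-p/(p+1)},\qquad G=\frac{V^{p/(p+1)}|u|^{2p/(p+1)}}{I^{p/(p+1)}},
\]
which have unit $L^{p+1}$ and $L^{(p+1)/p}$ norms and satisfy $\int_\Omega FG\,dx=A^{2p/(p+1)}/I^{p/(p+1)}$. Combined with the elementary bound $(1-\epsilon)^{(p+1)/p}\le 1-\epsilon$, this yields, via~\eqref{fundamental} and the lower bound on $I$ from the previous step,
\[
\mathcal{E}_f(V)-\mathcal{E}_f(U_0)\ge c\,\bigl\|V^{-p/(p+1)}-G^{1/p}\bigr\|^{p+1}_{L^{p+1}(\Omega)},\quad G^{1/p}=\bigl(u^2V/I\bigr)^{1/(p+1)}.
\]
A triangle inequality in $L^{p+1}(\Omega)$ then connects the left-hand side above to $\|V^{-p/(p+1)}-U_0^{-p/(p+1)}\|_{L^{p+1}(\Omega)}$, at the cost of a remainder controlled, via the elementary bound $|a^{1/(p+1)}-b^{1/(p+1)}|^{p+1}\le|a-b|$ for $a,b\ge 0$, by $\int_\Omega|u^2V/I-u_0^2U_0/I_0|\,dx$. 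Splitting this integral, its three pieces are controlled, respectively, by Lemma~\ref{lm_L2V} (for $\int Vu^2$ vs.\ $\int U_0u_0^2$), Lemma~\ref{lm:corner} (for $|I-I_0|$), and a further application of Lemma~\ref{lm:sospetto} with $c_4<\infty$ (to handle $\int|u^2-u_0^2|U_0$ when $|\Omega|=+\infty$, where the Sobolev embedding $W^{1,2}_0(\Omega)\hookrightarrow L^{2p/(p+1)}(\Omega)$ fails).

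Finally, I convert the $L^{p+1}$-control of $V^{-p/(p+1)}-U_0^{-p/(p+1)}$ into an $L^p$-control of $V^{-1}-U_0^{-1}$ via the pointwise mean-value inequality
\[
|a-b|\le\frac{p+1}{p}\max(a,b)^{1/p}\,|a^{p/(p+1)}-b^{p/(p+1)}|,
\]
followed by H\"older's inequality in $L^{p+1}\times L^{(p+1)/p}$, exploiting that $\|V^{-p/(p+1)}\|_{L^{p+1}(\Omega)}=\|U_0^{-p/(p+1)}\|_{L^{p+1}(\Omega)}=1$. The exponent $\beta=2p(p+1)/(p-1)$ in~\eqref{eccolo} emerges by balancing the $(p+1)$-th power produced by the quantitative H\"older against the quadratic $J_{p,f}$-stability of Proposition~\ref{prop:cazzatina} and the $1/2$-H\"older subadditivity used above, after the various remainders have been absorbed into the left-hand side. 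The main obstacle is the third step: the remainder produced by the triangle inequality in $L^{p+1}(\Omega)$ couples the $u$-dependent and $V$-dependent errors, and the decoupling --- in particular the estimate of $\int|u^2-u_0^2|U_0$ without a Sobolev embedding into $L^{2p/(p+1)}(\Omega)$ --- requires the full strength of the integrability hypothesis~\eqref{ipotesiu0}.
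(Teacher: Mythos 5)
Your outline reproduces the opening moves of the paper's argument faithfully: the decomposition~\eqref{fundamental}, the reduction via Lemma~\ref{lm:reduction} after verifying~\eqref{beta} through Lemma~\ref{lm:corner}, and the application of~\eqref{holderquant2} with $q=p+1$, $f=U^{-p/(p+1)}$, $g=|u|^{2p/(p+1)}U^{p/(p+1)}/I^{p/(p+1)}$, arriving at an $L^{p+1}(\Omega)$ control of $U^{-p/(p+1)}-(u^2U/I)^{1/(p+1)}$. The divergence — and the gap — is in what you do next.

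You propose a triangle inequality \emph{directly in $L^{p+1}(\Omega)$}, comparing to $U_0^{-p/(p+1)}=(u_0^2U_0/I_0)^{1/(p+1)}$, and then bounding the remainder $\|(u^2U/I)^{1/(p+1)}-(u_0^2U_0/I_0)^{1/(p+1)}\|^{p+1}_{L^{p+1}}$ via $|a^{1/(p+1)}-b^{1/(p+1)}|^{p+1}\le|a-b|$ by $\int_\Omega|u^2U/I-u_0^2U_0/I_0|\,dx$. This integral is not controllable. Any natural splitting produces a term of the form $\int_\Omega u^2|U-U_0|\,dx$ or $\int_\Omega u_0^2|U-U_0|\,dx$ or $\int_\Omega U_0|u^2-u_0^2|\,dx$, and none of these is bounded by the data at hand: the admissibility constraint $\int U^{-p}\le1$ only prevents $U$ from being too \emph{small} in an integral sense, so $U$ may be arbitrarily large on sets of small measure without affecting either $\|1/U-1/U_0\|_{L^p}$ or the energy gap; and $U_0\sim|u_0|^{-2/(p+1)}$ blows up where $u_0$ vanishes, so $\int U_0|u^2-u_0^2|$ is not controlled by $\|u-u_0\|_{W^{1,2}_0}$ even with the hypothesis~\eqref{ipotesiu0}. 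In other words, the remainder you generate couples the potential and state variables in precisely the way the weighted constraint cannot see. The paper sidesteps this entirely: rather than leaving the $L^{p+1}(\Omega)$ norm in that form, it rewrites $\|a\,U^{1/(p+1)}-U^{-p/(p+1)}\|^{p+1}_{L^{p+1}(\Omega)}$ as $\|\,|a-1/U|^{(p+1)/2}\|^2_{L^2(\Omega;U)}$ and then applies~\eqref{stimaLV} with $U$ as the weight, which \emph{downgrades} the weighted $L^{p+1}$ norm to an unweighted $L^p(\Omega)$ norm of $|u|^{2/(p+1)}/I^{1/(p+1)}-1/U$, eliminating the uncontrollable $U$-weights in one stroke. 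Only then is the triangle inequality performed (in $L^p$, changing normalization from $I^{1/(p+1)}$ to $\|u\|_{L^{2p/(p+1)}}^{2/p}$), and it couples cleanly with a \emph{separate} estimate of $\mathcal{I}_2$ rather than with the pointwise $V$-weighted remainder you aim for.

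Two secondary points. First, your final conversion from the $L^{p+1}$ control of $V^{-p/(p+1)}-U_0^{-p/(p+1)}$ to the $L^p$ control of $1/V-1/U_0$ has the wrong exponent in the mean-value factor: it should be $\max(a,b)^{1/(p+1)}$, not $\max(a,b)^{1/p}$; with $1/(p+1)$, the needed weight $\max(1/V,1/U_0)^{p/(p+1)}\in L^{p+1}(\Omega)$ reduces to $\int V^{-p}\le 1$ and the step does close, whereas with $1/p$ one would need $L^{p+1}$-integrability of $1/V$, which is unavailable. In the paper this conversion is replaced by the $2/(p+1)$-H\"older continuity of $t\mapsto|t|^{2/(p+1)}$ applied to the normalized state functions, feeding into the $L^p$ triangle inequality. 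Second, you describe the exponent $\beta=2p(p+1)/(p-1)$ as ``emerging by balancing'' without identifying the mechanism: in the paper it comes specifically from the \emph{Clarkson inequality} applied to $g_0=u_0/\|u_0\|_{L^{2p/(p+1)}}$ and the midpoint $g$ (Lemma~\ref{lm:sospetto} providing the input $\|g\|_{L^{2p/(p+1)}}\ge 1-O(\sqrt{\text{gap}})$), which produces the $2p/(p-1)$-th power and hence, after the $2/(p+1)$-H\"older step, the final exponent $\tfrac{p-1}{2p(p+1)}$. This ingredient is absent from your proposal and cannot be recovered from the steps you describe.
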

\begin{proof}
We divide the proof into various steps.
\vskip.2cm\noindent
\underline{\it Reduction step.} Let $V$ be a potential admissible in \eqref{minimization}. We set
\begin{equation}
\label{c5}
c_5=\min\left\{1,\,\left(\frac{c_2^2\, c_3}{2}\right)^2\right\},
\end{equation}
where we recall that $c_2$ is the $L^{2\,p/(p+1)}(\Omega)$ norm of $u_0$ and $c_3$ is the constant in \eqref{normaLmerda}.
Since by hypothesis
\[
\left\|\frac{1}{V}-\frac{1}{U_0}\right\|_{L^p(\Omega)}\le 2,
\] 
we can always assume 
\[
\mathcal{E}_f(V)-\mathcal{E}_f(U_0)\le c_5,
\]
otherwise \eqref{eccolo} is trivially true with the constant $\sigma_m=c_5\,4^{-p(p+1)/(p-1)}$. Under this assumption, by definition of $c_5$ and Lemma \ref{lm:corner} we have that the energy function $v$ of $V$ verifies
\[
\left(\int_\Omega|v|^{2p/(p+1)}\,dx\right)^{(p+1)/p}\ge\frac{1}{2}\left(\int_\Omega|u_0|^{2p/(p+1)}\,dx\right)^{(p+1)/p}=\frac{c_2^2}{2}.
\]
This in turn implies that $v$ verifies \eqref{beta} with $\beta=c^2_2/2$ thanks to \eqref{stimaLV}. From the Reduction Lemma \ref{lm:reduction} we thus obtain that there exists a positive potential $U$ with $\|1/U\|_{L^p(\Omega)}=1$ such that
\[
\left\|\frac{1}{V}-\frac{1}{U_0}\right\|_{L^p(\Omega)}\le \left\|\frac{1}{U}-\frac{1}{U_0}\right\|_{L^p(\Omega)}+\frac{4}{c_2^2}\, \Big(\mathcal{E}_f(V)-\mathcal{E}_f(U_0)\Big)
\]
and
\[
\mathcal{E}_f(U)-\mathcal{E}_f(U_0)\le \mathcal{E}_f(V)-\mathcal{E}_f(U_0).
\]
We are going to prove the stability estimate \eqref{eccolo} for the potential $U$. Observe that since the energy gap has decreased, we still have
\begin{equation}\label{piccolo}
\mathcal{E}_f(U)-\mathcal{E}_f(U_0)\le c_5,
\end{equation}
and thus again
\begin{equation}\label{dalcanto}
\left(\int_\Omega|u|^{2p/(p+1)}\,dx\right)^{(p+1)/p}\ge\frac{c_2^2}{2}.
\end{equation}
where $u$ is now the energy function of $U$.
From \eqref{fundamental} and Proposition \ref{prop:cazzatina}, we have
\begin{equation}
\label{primotempo}
\mathcal{E}_f(U)-\mathcal{E}_f(U_0)\ge\frac{1}{2}\left(\mathcal{I}_1+\mathcal{I}_2\right),
\end{equation}
where we introduced the notation
\[
\mathcal{I}_1=\int_\Omega U\,u^2\, dx-\left(\int_\Omega |u|^{2p/(p+1)}\right)^{(p+1)/p}\qquad\mbox{ and }\qquad\mathcal{I}_2=\|u-u_0\|^2_{W^{1,2}_0(\Omega)}.
\]
We proceed to estimate $\mathcal{I}_1$ and $\mathcal{I}_2$ separately.
\vskip.2cm\noindent 
\underline{\it Estimate on $\mathcal{I}_1$.} For this we use the quantitative H\"older inequality \eqref{holderquant2} with
\[
q=p+1,\qquad q'=\frac{p+1}{p},\qquad f=U^{-p/(p+1)},\qquad g=
\frac{|u|^{2p/(p+1)}\,U^{p/(p+1)}}{\displaystyle\left(\int_\Omega U\,u^2\,dx\right)^{p/(p+1)}}.
\]
Thus we get
\[
\begin{split}
\left(\int_\Omega U\,u^2\,dx\right)^{p/(p+1)}&-\int_\Omega|u|^{2p/(p+1)}\,dx\\
&\ge\frac{1}{2^p(p+1)}\left(\int_\Omega U\,u^2\, dx\right)^{p/(p+1)}\,\left\|\frac{|u|^{2/(p+1)}\,U^{1/(p+1)}}{\displaystyle\left(\int_\Omega U\,u^2\,dx\right)^{1/(p+1)}}-U^{-p/(p+1)}\right\|^{p+1}_{L^{p+1}(\Omega)}.
\end{split}
\]
By using \eqref{stimaLV} and \eqref{dalcanto} we have
\[
\left(\int_\Omega U\,u^2\, dx\right)^\frac{p}{p+1}\ge \left(\frac{c_2^2}{2}\right)^\frac{p}{p+1},
\]
and by convexity of the function $t\mapsto t^{(p+1)/p}$, we have
\[
\int_\Omega U\, u^2\, dx-\left(\int_\Omega |u|^\frac{2\,p}{p+1}\, dx\right)^\frac{p+1}{p}\ge \frac{p+1}{p}\,\left(\int_\Omega |u|^\frac{2\,p}{p+1}\, dx\right)^\frac{1}{p}\, \left[\left(\int_\Omega U\, u^2\, dx\right)^\frac{p}{p+1}-\int_\Omega |u|^\frac{2\,p}{p+1}\ ,dx\right].
\]
Thus, for the moment we obtained
\begin{equation}
\label{boh}
\begin{split}
\mathcal{I}_1\ge \frac{c^2_2}{p\,2^{p+1}}\,\left\|\frac{|u|^\frac{2}{p+1}\, U^\frac{1}{p+1}}{\displaystyle\left(\int_\Omega u^2\, U\, dx\right)^\frac{1}{p+1}}-\frac{1}{U^\frac{p}{p+1}}\right\|^{p+1}_{L^{p+1}(\Omega)},
\end{split}
\end{equation}
where we used again \eqref{dalcanto} to estimate the norm of $u$ from below.
Observe that we have
\[
\begin{split}
\left\|\frac{|u|^\frac{2}{p+1}\, U^\frac{1}{p+1}}{\displaystyle\left(\int_\Omega u^2\, U\, dx\right)^\frac{1}{p+1}}-\frac{1}{U^\frac{p}{p+1}}\right\|^{p+1}_{L^{p+1}(\Omega)}&=\left\|\frac{|u|^\frac{2}{p+1}}{\left(\displaystyle\int_\Omega u^2\, U\, dx\right)^\frac{1}{p+1}}-\frac{1}{U}\right\|^{p+1}_{L^{p+1}(\Omega;U)}\\
&=\left\|\left|\frac{|u|^\frac{2}{p+1}}{\left(\displaystyle\int_\Omega u^2\, U\, dx\right)^\frac{1}{p+1}}-\frac{1}{U}\right|^\frac{p+1}{2}\right\|^2_{L^{2}(\Omega;U)}
\end{split}
\]
then by applying \eqref{stimaLV} we get
\[
\left\|\frac{|u|^\frac{2}{p+1}\, U^\frac{1}{p+1}}{\displaystyle\left(\int_\Omega u^2\, U\, dx\right)^\frac{1}{p+1}}-\frac{1}{U^\frac{p}{p+1}}\right\|^{p+1}_{L^{p+1}(\Omega)}\ge \left\| \frac{|u|^\frac{2}{p+1}}{\left(\displaystyle\int_\Omega u^2\, U\, dx\right)^\frac{1}{p+1}}-\frac{1}{U}\right\|^{p+1}_{L^p(\Omega)}.
\]
We now use the triangle inequality and the convexity of $t\mapsto t^{p+1}$, so that
\[
\begin{split}
\left\|\frac{|u|^\frac{2}{p+1}}{\left(\displaystyle\int_\Omega u^2\, U\, dx\right)^\frac{1}{p+1}}-\frac{1}{U}\right\|^{p+1}_{L^{p}(\Omega)}&\ge \frac{1}{2^p}\,\left\|\frac{|u|^\frac{2}{p+1}}{\left(\displaystyle\int_\Omega |u|^\frac{2\,p}{p+1}\, dx\right)^\frac{1}{p}}-\frac{1}{U}\right\|^{p+1}_{L^{p}(\Omega)}\\
&-\left\|\frac{|u|^\frac{2}{p+1}}{\left(\displaystyle\int_\Omega u^2\, U\, dx\right)^\frac{1}{p+1}}-\frac{|u|^\frac{2}{p+1}}{\left(\displaystyle\int_\Omega |u|^\frac{2\,p}{p+1}\, dx\right)^\frac{1}{p}}\right\|^{p+1}_{L^{p}(\Omega)}.
\end{split}
\]
The last term simply gives
\[
\left\|\frac{|u|^\frac{2}{p+1}}{\left(\displaystyle\int_\Omega u^2\, U\, dx\right)^\frac{1}{p+1}}-\frac{|u|^\frac{2}{p+1}}{\left(\displaystyle\int_\Omega |u|^\frac{2\,p}{p+1}\, dx\right)^\frac{1}{p}}\right\|^{p+1}_{L^{p}(\Omega)}=\frac{\left|\left(\displaystyle\int_\Omega |u|^\frac{2\,p}{p+1}\, dx\right)^\frac{1}{p}-\left(\displaystyle\int_\Omega u^2\, U\, dx\right)^\frac{1}{p+1}\right|^{p+1}}{\displaystyle\int_\Omega u^2\, U\, dx}.
\]
By keeping everything together, we have obtained
\begin{equation}
\label{boh2}
\begin{split}
\mathcal{I}_1&\ge \frac{c_2^2}{p\,2^{2\,p+1}}\,\left\|\frac{|u|^\frac{2}{p+1}}{\left(\displaystyle\int_\Omega |u|^\frac{2\,p}{p+1}\, dx\right)^\frac{1}{p}}-\frac{1}{U}\right\|^{p+1}_{L^{p}(\Omega)}\\
&-\frac{1}{p\,2^{p}}\,\left|\left(\displaystyle\int_\Omega |u|^\frac{2\,p}{p+1}\, dx\right)^\frac{1}{p}-\left(\displaystyle\int_\Omega u^2\, U\, dx\right)^\frac{1}{p+1}\right|^{p+1},
\end{split}
\end{equation}
where as always we used \eqref{stimaLV} and \eqref{dalcanto} to estimate the $L^2(\Omega;U)$ norm of $u$.
By using \eqref{primotempo} and the convexity of $t\mapsto t^{p+1}$, we get
\[
\begin{split}
2\, (\mathcal{E}_f(U)-\mathcal{E}_f(U_0))&\ge \mathcal{I}_1\ge (p+1)\,\int_\Omega |u|^\frac{2\,p}{p+1}\, dx\,\left[\left(\displaystyle\int_\Omega u^2\, U\, dx\right)^\frac{1}{p+1}-\left(\int_\Omega |u|^\frac{2\,p}{p+1}\, dx\right)^\frac{1}{p}\right]\\
&\ge (p+1)\,\left(\frac{c_2^2}{2}\right)^\frac{p}{p+1}\,\left[\left(\displaystyle\int_\Omega u^2\, U\, dx\right)^\frac{1}{p+1}-\left(\int_\Omega |u|^\frac{2\,p}{p+1}\, dx\right)^\frac{1}{p}\right]
\end{split}
\]
By using the latter, from \eqref{boh2} we can infer
\[
\begin{split}
\mathcal{I}_1&\ge \frac{c_2^2}{p\,2^{2\,p+1}}\,\left\|\frac{|u|^\frac{2}{p+1}}{\left(\displaystyle\int_\Omega |u|^\frac{2\,p}{p+1}\, dx\right)^\frac{1}{p}}-\frac{1}{U}\right\|^{p+1}_{L^{p}(\Omega)}\\
&-\frac{2}{p\,(p+1)}\,\left(\frac{2}{(p+1)\, c_2^2}\right)^p\, (\mathcal{E}_f(U)-\mathcal{E}_f(U_0))^{p+1}.
\end{split}
\]
We now insert the previous estimate in \eqref{primotempo}, use that $\mathcal{E}_f(U)-\mathcal{E}_f(U_0)\le 1$ and take the power $1/(p+1)$ on both sides. The resulting estimate is
\begin{equation}
\label{I1fine}
\Big(\mathcal{E}_f(U)-\mathcal{E}_f(U_0)\Big)^\frac{1}{p+1}\ge c_6\, \left\|\frac{|u|^\frac{2}{p+1}}{\left(\displaystyle\int_\Omega |u|^\frac{2\,p}{p+1}\, dx\right)^\frac{1}{p}}-\frac{1}{U}\right\|_{L^{p}(\Omega)},
\end{equation}
where $c_6>0$ is the following constant depending only on $p$ and $c_2$
\begin{equation}
\label{c6}
c_6:=\left(1+\frac{2}{p\,(p+1)}\,\left(\frac{2}{(p+1)\,c_2^{2}}\right)^p\right)^{-\frac{1}{p+1}}\, \left(\frac{c_2^2}{p\, 4^{p+1}}\right)^\frac{1}{p+1}.
\end{equation}
\underline{\it Estimate on $\mathcal{I}_2$.}
Again by combining the triangle inequality and the convexity of $t\mapsto t^2$, we have
\begin{equation}
\label{H11}
\begin{split}
\mathcal{I}_2=\|u-u_0\|^2_{W^{1,2}_0(\Omega)}&=c_2^2\,\left\|\frac{u}{\|u_0\|_{L^\frac{2\,p}{p+1}(\Omega)}}-\frac{u_0}{\|u_0\|_{L^\frac{2\,p}{p+1}(\Omega)}}\right\|^2_{W^{1,2}_0(\Omega)}\\
&\ge\frac{1}{2}\,c_2^2\,\left\|\frac{u}{\|u\|_{L^\frac{2\,p}{p+1}(\Omega)}}-\frac{u_0}{\|u_0\|_{L^\frac{2\,p}{p+1}(\Omega)}}\right\|^2_{W^{1,2}_0(\Omega)}\\
&-\left(\frac{\|u\|_{W^{1,2}_0(\Omega)}}{\|u\|_{L^\frac{2\,p}{p+1}(\Omega)}}\right)^2\,\left(\|u\|_{L^\frac{2\,p}{p+1}(\Omega)}-\|u_0\|_{L^\frac{2\,p}{p+1}(\Omega)}\right)^2.
\end{split}
\end{equation}
We also observe that, by recalling the energy estimate \eqref{energyestimate} and \eqref{dalcanto}, we get
\begin{equation}
\label{mare}
\left(\frac{\|u\|_{W^{1,2}_0(\Omega)}}{\|u\|_{L^\frac{2\,p}{p+1}(\Omega)}}\right)^2\le \frac{2}{c_2^2}\,\|f\|^2_{W^{-1,2}(\Omega)}.
\end{equation}
In order to estimate the negative term on the right-hand side in \eqref{H11}, we can simply use Lemma \ref{lm:corner}. Indeed, we have
\begin{equation}
\label{normaLmerda_a}
\begin{split}
\left|\|u\|_{L^\frac{2\,p}{p+1}(\Omega)}-\|u_0\|_{L^\frac{2\,p}{p+1}(\Omega)}\right|&\le \left|\|u\|_{L^\frac{2\,p}{p+1}(\Omega)}-\|u_0\|_{L^\frac{2\,p}{p+1}(\Omega)}\right|\frac{ \|u\|_{L^\frac{2\,p}{p+1}(\Omega)}+\|u_0\|_{L^\frac{2\,p}{p+1}(\Omega)} }{\|u_0\|_{L^\frac{2\,p}{p+1}(\Omega)}}\\
&=\frac{1}{\|u_0\|_{L^\frac{2\,p}{p+1}(\Omega)}}\,\left|\|u\|^2_{L^\frac{2\,p}{p+1}(\Omega)}-\|u_0\|^2_{L^\frac{2\,p}{p+1}(\Omega)}\right|\\
&\le \frac{1}{c_3\, c_2}\sqrt{\mathcal{E}_f(U)-\mathcal{E}_f(U_0)}
\end{split}
\end{equation}
where we used \eqref{normaLmerda} (recall that we are assuming \eqref{piccolo}). By using \eqref{mare} and \eqref{normaLmerda_a} in \eqref{H11}, a further application of Sobolev inequality leads us to
\begin{equation}
\label{I2fine}
\sqrt{\mathcal{E}_f(U)-\mathcal{E}_f(U_0)}\ge c_7\,\left\|\frac{u}{\|u\|_{L^\frac{2\,p}{p+1}(\Omega)}}-\frac{u_0}{\|u_0\|_{L^\frac{2\,p}{p+1}(\Omega)}}\right\|_{L^{2^*}(\Omega)},
\end{equation}
where the constant $c_7>0$ depends only on $N,\,f$ and $c_2$
\begin{equation}
\label{c7}
c_7:=\sqrt{\frac{T_N}{c_3^2\,c_2^4+\|f\|^2_{W^{-1,2}(\Omega)}}}\,\frac{c_3\,c^3_2}{2}.
\end{equation}
\underline{\it Stability estimate for $U$.} We now use Lemma \ref{lm:sospetto} with the choices
\[
r=\frac{2\,p}{p+1},\qquad\qquad s=2^*,
\]
and
\begin{equation}
\label{UU0}
g_0=\frac{u_0}{\|u_0\|_{L^\frac{2\,p}{p+1}(\Omega)}},\qquad\qquad g=\frac{1}{2}\left[\frac{u}{\|u\|_{L^\frac{2\,p}{p+1}(\Omega)}}+\frac{u_0}{\|u_0\|_{L^\frac{2\,p}{p+1}(\Omega)}}\right].
\end{equation}
Thus we get
\[
\left\|g\right\|_{L^\frac{2\,p}{p+1}(\Omega)}\ge 1-c_4\,c_2^\frac{1-p}{p+1}\,\left\|g-g_0\right\|_{L^{2^*}(\Omega)},
\]
since
\[
\frac{\left\||g_0|^{r-1}\right\|_{L^{s'}(\Omega)}}{\|g_0\|^{r-1}_{L^r(\Omega)}}=\left\||g_0|^\frac{p-1}{p+1}\right\|_{L^{(2^*)'}(\Omega)}=c_4\, c_2^\frac{1-p}{p+1},
\]
which is finite by hypothesis.
By combining the previous with \eqref{I2fine}, we obtain\footnote{Up to further suppose that
\[
\mathcal{E}_f(U)-\mathcal{E}_f(U_0)\le \min\left\{c_5,\left(\frac{2\,c_7\, c_2^\frac{p-1}{p+1}}{c_4}\right)^2\right\},
\]
we can assume that the right-hand side of \eqref{uff!} is positive.}
\begin{equation}
\label{uff!}
\left\|g\right\|_{L^\frac{2\,p}{p+1}(\Omega)}\ge 1-\frac{c_4\,c_2^\frac{1-p}{p+1}}{2\,c_7}\sqrt{\mathcal{E}_f(U)-\mathcal{E}_f(U_0)}.
\end{equation}
The previous estimate is crucial in order to estimate $g-g_0$ in $L^{2\,p/(p+1)}$. Indeed, by {\it Clarkson inequality}\footnote{Let $1<q\le 2$ and $h_1,h_2\in L^q(\Omega)$ be two functions with unit norm. Then we have
\[
\left\|\frac{h_1+h_2}{2}\right\|^{q'}_{L^q(\Omega)}+\left\|\frac{h_1-h_2}{2}\right\|^{q'}_{L^q(\Omega)}\le 1,
\]
see \cite[Theorem 2]{Cl}.} and \eqref{uff!} we can infer
\[
\left\|g-g_0\right\|^{\frac{2\,p}{p-1}}_{L^\frac{2\,p}{p+1}(\Omega)}\le 1-\left\|g\right\|^\frac{2\,p}{p-1}_{L^\frac{2\,p}{p+1}(\Omega)}\le 1-\left(1-\frac{c_4\,c_2^\frac{1-p}{p+1}}{2\,c_7}\,\sqrt{\mathcal{E}_f(U)-\mathcal{E}_f(U_0)}\right)^\frac{2\,p}{p-1},
\]
and thus
\[
\left\|g-g_0\right\|^{\frac{2\,p}{p-1}}_{L^\frac{2\,p}{p+1}(\Omega)}\le \frac{p}{p-1}\,\frac{c_4\,c_2^\frac{1-p}{p+1}}{c_7}\,\sqrt{\mathcal{E}_f(U)-\mathcal{E}_f(U_0)},
\]
thanks to the convexity of $t\mapsto t^{2\,p/(p-1)}$. 
We now go back to the definition \eqref{UU0} of $g$ and $g_0$, so that the previous finally gives
\begin{equation}
\label{inLcosa}
\left(\frac{1}{2}\right)^\frac{2\,p}{p-1}\,\left\|\frac{u}{\|u\|_{L^\frac{2\,p}{p+1}(\Omega)}}-\frac{u_0}{\|u_0\|_{L^\frac{2\,p}{p+1}(\Omega)}}\right\|^{\frac{2\,p}{p-1}}_{L^\frac{2\,p}{p+1}(\Omega)}\le \frac{p}{p-1}\,\frac{c_4\,c_2^\frac{1-p}{p+1}}{c_7}\,\sqrt{\mathcal{E}_f(U)-\mathcal{E}_f(U_0)},
\end{equation}
Recall that $2/(p+1)<1$, thus the function $t\mapsto |t|^{2/(p+1)}$ is $2/(p+1)-$H\"older continuous and we have
\[
\left|\frac{|u|^\frac{2}{p+1}}{\left(\displaystyle\int_\Omega |u|^\frac{2\,p}{p+1}\,dx\right)^\frac{1}{p}}-\frac{|u_0|^\frac{2}{p+1}}{\left(\displaystyle\int_\Omega |u_0|^\frac{2\,p}{p+1}\,dx\right)^\frac{1}{p}}\right|^p\le \left|\frac{u}{\|u\|_{L^\frac{2\,p}{p+1}(\Omega)}}-\frac{u_0}{\|u\|_{L^\frac{2\,p}{p+1}(\Omega)}}\right|^\frac{2\,p}{p+1}.
\]
Thus from \eqref{inLcosa} we obtain
\begin{equation}
\label{penultimo}
\big[\mathcal{E}_f(U)-\mathcal{E}_f(U_0)\big]^\frac{p-1}{2\,p\,(p+1)}\ge c_8\,\left\|\frac{|u|^\frac{2}{p+1}}{\left(\displaystyle\int_\Omega |u|^\frac{2\,p}{p+1}\,dx\right)^\frac{1}{p}}-\frac{|u_0|^\frac{2}{p+1}}{\left(\displaystyle\int_\Omega |u_0|^\frac{2\,p}{p+1}\,dx\right)^\frac{1}{p}}\right\|_{L^p(\Omega)},
\end{equation}
where the constant $c_8$ is given by
\begin{equation}
\label{c8}
c_8:=\left(\frac{1}{4}\,\left(\frac{p-1}{p}\,\frac{c_7\,c_2^\frac{p-1}{p+1}}{c_4}\right)^\frac{p-1}{p}\right)^\frac{1}{p+1}.
\end{equation}
If we now use the relation \eqref{relazionem} between $U_0$ and $u_0$, the triangle inequality, \eqref{I1fine} and \eqref{penultimo} we get
\[
\begin{split}
\left\|\frac{1}{U}-\frac{1}{U_0}\right\|_{L^p(\Omega)}&\le \left\|\frac{1}{U}-\frac{|u|^\frac{2}{p+1}}{\left(\displaystyle\int_\Omega |u|^\frac{2\,p}{p+1}\,dx\right)^\frac{1}{p}}\right\|_{L^p(\Omega)}+\left\|\frac{|u|^\frac{2}{p+1}}{\left(\displaystyle\int_\Omega |u|^\frac{2\,p}{p+1}\,dx\right)^\frac{1}{p}}-\frac{|u_0|^\frac{2}{p+1}}{\left(\displaystyle\int_\Omega |u_0|^\frac{2\,p}{p+1}\,dx\right)^\frac{1}{p}}\right\|_{L^p(\Omega)}\\
&\le \left[\frac{\Big(\mathcal{E}_f(U)-\mathcal{E}_f(U_0)\Big)^\frac{1}{p+1}}{c_6}+\frac{\big(\mathcal{E}_f(U)-\mathcal{E}_f(U_0)\big)^\frac{p-1}{2\,p\,(p+1)}}{c_8}\right].
\end{split}
\]
Observe that since $p>1$, we have
\[
\frac{p-1}{2\,p\,(p+1)}<\frac{1}{p+1},
\]
thus it is now sufficient to use hypothesis \eqref{piccolo} to get \eqref{eccolo} for $U$.
\vskip.2cm\noindent
\underline{\it Conclusion: stability estimate for $V$.} We now go back to our potential $V$. By using \eqref{reduction2}, the previous step and \eqref{reduction1}, we get
\[
\begin{split}
\left\|\frac{1}{V}-\frac{1}{U_0}\right\|_{L^p(\Omega)}&\le \left\|\frac{1}{U}-\frac{1}{U_0}\right\|_{L^p(\Omega)}+\frac{4}{c_2^2}\, \Big(\mathcal{E}_f(V)-\mathcal{E}_f(U_0)\Big)\\
&\le \frac{c_6+c_8}{c_6\,c_8}\, \Big(\mathcal{E}_f(U)-\mathcal{E}_f(U_0)\Big)^\frac{p-1}{2\,p\,(p+1)}+\frac{4}{c_2^2}\, \Big(\mathcal{E}_f(V)-\mathcal{E}_f(U_0)\Big)\\
&\le \left[\frac{c_6+c_8}{c_6\,c_8}+\frac{4}{c_2^2}\right]\,\Big(\mathcal{E}_f(V)-\mathcal{E}_f(U_0)\Big)^\frac{p-1}{2\,p\,(p+1)}, 
\end{split}
\]
where we also used that $\mathcal{E}_f(V)-\mathcal{E}_f(U_0)\le 1$. This concludes the proof.
\end{proof}
Some comments on the previous result are in order.
\begin{oss}[Integrability assumption on $u_0$]
\label{oss:ipotesi}
We point out that
\[
0<(2^*)'\, \frac{p-1}{p+1}<2,\qquad \mbox{ for every } p>1,
\]
thus the condition \eqref{ipotesiu0}
of Theorem \ref{teo:min_stab} is always satisfied if $|\Omega|<+\infty$. When $|\Omega|=+\infty$, this is still verified if $f$ decreases sufficiently fast at infinity. For example, by appealing to Theorem \ref{teo:u0} in the Appendix this holds true for $f\in L^{r}(\Omega)$ with $r>N/2$ and 
\[
|f(x)|=O\left(\frac{1}{|x|^\alpha}\right),\qquad \mbox{ for } |x|\to \infty,\qquad \alpha>\frac{N+2}{2}.
\]
Observe that the condition on $\alpha$ is the minimal assumption for $|x|^{-\alpha}$ to be $(2^*)'-$integrable at infinity.
\end{oss}
\begin{oss}
\label{oss:sigma}
A closer inspection of the previous proof informs us that a possible value for the constant $\sigma_m$ in \eqref{eccolo} is 
\[
\sigma_m=\min\left\{\left(\frac{c_6+c_8}{c_6\,c_8}+\frac{4}{c_2^2}\right)^{-\frac{2\,p\,(p+1)}{p-1}},\ c_5\,4^{-\frac{p\,(p+1)}{p-1}},\ \left(\frac{2\,c_7\, c_2^\frac{p-1}{p+1}}{c_4}\right)^2 4^{-\frac{p\,(p+1)}{p-1}}\right\}.
\]
where the constants $c_5,\,c_6,\,c_7$ and $c_8$ are defined in \eqref{c5}, \eqref{c6}, \eqref{c7} and \eqref{c8}.
\end{oss}

\subsection{Stability of the state functions}
By suitably combining some of the estimates we used so far, we also get a stability result for the energy functions in the natural space $W^{1,2}_0(\Omega)\cap L^{2\,p/(p+1)}(\Omega)$.
\begin{prop}
Under the hypotheses of Theorem \ref{teo:min_stab}, we have
\begin{equation}
\label{stab_functions}
\Big(\mathcal{E}_f(V)-\mathcal{E}_f(V_0)\Big)^\frac{p-1}{2\,p}\ge c\left[ \|u-u_0\|^2_{W^{1,2}_0(\Omega)}+\left\|u-u_0\right\|^{2}_{L^\frac{2\,p}{p+1}(\Omega)}\right],
\end{equation}
for some constant $c>0$ depending on $N,p,c_2,c_4$ and $\|f\|_{W^{-1,2}(\Omega)}$.
\end{prop}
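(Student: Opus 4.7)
The plan is to recycle the chain of estimates already assembled in the proof of Theorem \ref{teo:min_stab}, observing that each link depends only on $u$ being the energy function of an admissible potential and \emph{not} on any saturation of the constraint $\int_\Omega V^{-p}\le 1$. I will write $u$ for the energy function of $V$ and assume throughout that $\mathcal{E}_f(V)-\mathcal{E}_f(U_0)\le c_5$, since otherwise the bound is trivial: the left member of \eqref{stab_functions} is then bounded below by a positive constant, while the right member is bounded above via the energy estimates \eqref{energyestimate} and \eqref{energyestimate2}.

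The $W^{1,2}_0$ contribution will come essentially for free. Combining identity \eqref{fundamental}, the positivity of the second bracket (a consequence of \eqref{stimaLV}), and Proposition \ref{prop:cazzatina}, I obtain
\[
\mathcal{E}_f(V)-\mathcal{E}_f(U_0)\ge J_{p,f}(u)-J_{p,f}(u_0)\ge \frac{1}{2}\,\|u-u_0\|^2_{W^{1,2}_0(\Omega)},
\]
which, since $(p-1)/(2p)<1$ and the gap is at most one, already supplies the required control on $\|u-u_0\|^2_{W^{1,2}_0(\Omega)}$.

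The heart of the proof will be the $L^{2p/(p+1)}$ part. Under the smallness of the gap, Lemma \ref{lm:corner} forces $\|u\|^{2}_{L^{2p/(p+1)}(\Omega)}\ge c_2^2/2$, which is exactly the role played by \eqref{dalcanto} in the proof of Theorem \ref{teo:min_stab}. Consequently, the chain \eqref{H11}--\eqref{I2fine} applies verbatim to $u$ and produces
\[
\sqrt{\mathcal{E}_f(V)-\mathcal{E}_f(U_0)}\ge c_7\,\left\|\frac{u}{\|u\|_{L^{2p/(p+1)}(\Omega)}}-\frac{u_0}{c_2}\right\|_{L^{2^*}(\Omega)}.
\]
I will then feed this into Lemma \ref{lm:sospetto} with $r=2p/(p+1)$ and $s=2^*$, exactly as in the derivation of \eqref{uff!}, the integrability hypothesis \eqref{ipotesiu0} guaranteeing that the resulting constant is finite; Clarkson's inequality (legitimate since $2p/(p+1)\le 2$) will then give
\[
\left\|\frac{u}{\|u\|_{L^{2p/(p+1)}(\Omega)}}-\frac{u_0}{c_2}\right\|^{2}_{L^{2p/(p+1)}(\Omega)}\le C\,\bigl(\mathcal{E}_f(V)-\mathcal{E}_f(U_0)\bigr)^{(p-1)/(2p)}.
\]
The splitting
\[
u-u_0=\|u\|_{L^{2p/(p+1)}(\Omega)}\!\left(\frac{u}{\|u\|_{L^{2p/(p+1)}(\Omega)}}-\frac{u_0}{c_2}\right)+\frac{\|u\|_{L^{2p/(p+1)}(\Omega)}-c_2}{c_2}\,u_0,
\]
together with the uniform bound \eqref{energyestimate2} and the norm-difference estimate \eqref{normaLmerda_a}, will transfer this into the desired bound on $\|u-u_0\|^2_{L^{2p/(p+1)}(\Omega)}$, and the proof closes by adding the two contributions.

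The main obstacle, such as it is, is the bookkeeping verification that every step in the proof of Theorem \ref{teo:min_stab} remains valid when applied directly to the energy function of $V$: the Reduction Lemma was needed there only to transfer the $L^p$-stability from $1/U$ to $1/V$, whereas at the level of the state functions no analogous renormalization is required, since the key lower bound $\|u\|_{L^{2p/(p+1)}(\Omega)}\ge c_2/\sqrt{2}$ follows from the smallness of the energy gap alone through Lemma \ref{lm:corner}.
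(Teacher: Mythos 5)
Your proof is correct and follows essentially the same route as the paper's: it combines \eqref{fundamental}, Proposition \ref{prop:cazzatina}, the estimate \eqref{inLcosa} (restated through the chain \eqref{H11}--\eqref{I2fine} plus Lemma \ref{lm:sospetto} and Clarkson), and the norm-difference bound \eqref{normaLmerda_a}, exactly as the paper does. The one thing you make explicit — and the paper leaves tacit — is that \eqref{inLcosa} is valid for the energy function of $V$ itself, because the $\mathcal{I}_2$-chain only requires $\|u\|_{L^{2p/(p+1)}}\ge c_2/\sqrt{2}$ from Lemma \ref{lm:corner} and never uses the normalization $\|1/U\|_{L^p}=1$ supplied by the Reduction Lemma; this is a correct and useful clarification.
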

\begin{proof}
We first observe that by \eqref{energyestimate} we have
\[
\|u-u_0\|_{W^{1,2}_0(\Omega)}\le \|u\|_{W^{1,2}_0(\Omega)}+\|u_0\|_{W^{1,2}_0(\Omega)}\le2\,\|f\|_{W^{-1,2}(\Omega)},
\]
and by \eqref{energyestimate2}
\[
\|u-u_0\|_{L^\frac{2\,p}{p+1}(\Omega)}\le \|u\|_{L^\frac{2\,p}{p+1}(\Omega)}+\|u_0\|_{L^\frac{2\,p}{p+1}(\Omega)}\le\sqrt{2}\,\|f\|_{W^{-1,2}(\Omega)}.
\]
Thus we can assume without loss of generality that
\[
\mathcal{E}_f(V)-\mathcal{E}_f(V_0)\le 1,
\]
otherwise the result is trivially true. From \eqref{fundamental} and \eqref{cazzatina} we already know 
\[
\mathcal{E}_f(V)-\mathcal{E}_f(V_0)\ge\frac{1}{2}\,\|u-u_0\|^2_{W^{1,2}_0(\Omega)}
\]
and by \eqref{inLcosa}
\[
c_9\,\left\|\frac{u}{\|u\|_{L^\frac{2\,p}{p+1}(\Omega)}}-\frac{u_0}{\|u_0\|_{L^\frac{2\,p}{p+1}(\Omega)}}\right\|_{L^\frac{2\,p}{p+1}(\Omega)}^{\frac{4\,p}{p-1}}\le\mathcal{E}_f(V)-\mathcal{E}_f(V_0),
\]
with
\[
c_9=\left(\frac{p-1}{p}\,\frac{c_7\,c_2^\frac{p-1}{p+1}}{c_4}\right)^2\,\left(\frac{1}{2}\right)^\frac{4\,p}{p-1}.
\]
Moreover, by the triangle inequality it is not difficult to see that
\[
\begin{split}
\left\|u-u_0\right\|_{L^\frac{2\,p}{p+1}(\Omega)}&\le \left|\|u\|_{L^\frac{2\,p}{p+1}(\Omega)}-\|u_0\|_{L^\frac{2\,p}{p+1}(\Omega)}\right|\\
&+c_2\,\left\|\frac{u}{\|u\|_{L^\frac{2\,p}{p+1}(\Omega)}}-\frac{u_0}{\|u_0\|_{L^\frac{2\,p}{p+1}(\Omega)}}\right\|_{L^\frac{2\,p}{p+1}(\Omega)}.
\end{split}
\]
By combining these estimates and using \eqref{normaLmerda_a}, we get the desired conclusion.
\end{proof}
\begin{oss}
By interpolation, it is easy to obtain a stability estimate like \eqref{stab_functions} in $L^r(\Omega)$ for every $2\,p/(p+1)<r<2^*$ and in $W^{s,2}_0(\Omega)$ for every $0<s<1$.
\end{oss}

\appendix

\section{Sharp decay estimates for non autonomous Schr\"odinger equations}\label{sec:decay}

Given $1<p<\infty$, we set for simplicity $q=2\,p/(p+1)$ which is always between $1$ and $2$. In what follows we still denote by $u_0$ the unique minimizer of
\[
J_{p,f}(u)=\frac{1}{2}\, \int_\Omega |\nabla u|^2\, dx+\frac{1}{2}\, \left(\int_\Omega |u|^q\, dx\right)^\frac{2}{q}\,-\langle f,u\rangle.
\]
The aim of this Appendix is to prove some decay properties for the optimal function $u_0$, in the case $|\Omega|=+\infty$. We can confine ourselves to consider the case $\Omega=\mathbb{R}^N$.
\begin{teo}[Properties of $u_0$]
\label{teo:u0}
Let $r>N/2$ and $f\in L^{r}(\mathbb{R}^N)$ be such that there exist $C,R>0$ and $\alpha>(N+2)/2$ with
\[
|f(x)|\le C\, |x|^{-\alpha},\qquad \mbox{ for } |x|\ge R. 
\]
Then there exists $M=M(\|f\|_{L^r(\mathbb{R}^N)},c_2,C,R,\alpha)>0$ such that 
\begin{equation}
\label{linfty}
|u_0(x)|\le M,\qquad x\in\mathbb{R}^N.
\end{equation}
Moreover, if we denote by $w\in W^{1,2}_0(\mathbb{R}^N)$ the unique minimizer of
\[
\mathcal{J}(u)=\frac{1}{2}\, \int_{\mathbb{R}^N} |\nabla u|^2\, dx+\frac{c_2^{2-q}}{q}\, \int_{\mathbb{R}^N} |u|^{q}\, dx-\int_{\mathbb{R}^N} \frac{u}{(1+|x|^2)^{\alpha/2}}\, dx,
\]
then there exists $T=T(M,C,R,\alpha)>0$ such that
\begin{equation}
\label{comparison}
|u_0(x)|\le T^{2/(2-q)}\, w\left(\frac{x}{T}\right),\qquad x\in\mathbb{R}^N.
\end{equation}
In particular, we get
\begin{equation}
\label{bum}
|u_0(x)|\le C'\, |x|^{-\alpha/(q-1)},\qquad \mbox{ for } |x|\gg 1,
\end{equation}
for some constant $C'>0$.
\end{teo}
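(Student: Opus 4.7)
The function $u_0$ satisfies the Euler--Lagrange equation
\[
-\Delta u_0 + c_2^{2-q}\,|u_0|^{q-2}u_0 = f \qquad \text{in } \mathbb{R}^N,
\]
with $q = 2p/(p+1) \in (1,2)$, and similarly $w$ solves $-\Delta w + c_2^{2-q}\,w^{q-1} = (1+|x|^2)^{-\alpha/2}$ with $w>0$ by the strong maximum principle. The plan has three steps: (i) establish the $L^\infty$ bound \eqref{linfty} by Moser iteration; (ii) prove the scaled comparison \eqref{comparison} between $u_0$ and $v_T(x):=T^{2/(2-q)}w(x/T)$ via a sub/supersolution argument; (iii) derive the pointwise decay of $w$ through a power-type barrier, which then yields \eqref{bum}.

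\textbf{Steps (i) and (ii).} Because the potential term $c_2^{2-q}|u_0|^{q-2}u_0$ has the sign of $u_0$, testing the equation with $|u_0|^{2k}u_0$ throws it away favorably; together with $W^{1,2}_0 \hookrightarrow L^{2^*}$ and $f\in L^r$, $r>N/2$, Moser iteration yields $\|u_0\|_\infty \leq M$. The same scheme applied to $w$ (whose right-hand side is in $L^\infty\cap L^r$, using $\alpha>(N+2)/2>2$ for $N\geq 3$) gives $\|w\|_\infty<\infty$. For (ii), a direct calculation using the identity $2(q-1)/(2-q)=p-1$ produces
\[
-\Delta v_T + c_2^{2-q}\,v_T^{q-1} = T^{\,p-1+\alpha}\,(T^2+|x|^2)^{-\alpha/2}.
\]
I choose $T$ large enough that this right-hand side dominates $|f|$ pointwise. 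On $\{R\leq|x|\leq T\}$ and $\{|x|\geq T\}$ this follows from $|f|\leq C|x|^{-\alpha}$ by elementary power matching; on $\{|x|\leq R\}$, where $|f|$ is only $L^r$, I exploit instead that $v_T(x)\geq \tfrac12 T^{2/(2-q)}w(0)\to\infty$ as $T\to\infty$, so that $v_T\geq M\geq|u_0|$ automatically there. Combining this with Kato's inequality $-\Delta|u_0|+c_2^{2-q}|u_0|^{q-1}\leq|f|$, testing the difference inequality for $v_T-|u_0|$ against $(|u_0|-v_T)_+\in W^{1,2}_0(\mathbb{R}^N)$ and using the monotonicity of $t\mapsto t^{q-1}$ on $[0,\infty)$ leaves only a nonpositive gradient term, so $(|u_0|-v_T)_+\equiv 0$ and \eqref{comparison} follows.

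\textbf{Step (iii) and main obstacle.} Set $\beta:=\alpha/(2(q-1))$ and try the barrier $\phi(y)=A(1+|y|^2)^{-\beta}$, for which $\phi^{q-1}=A^{q-1}(1+|y|^2)^{-\alpha/2}$ matches the decay of the right-hand side of the $w$-equation, while a direct computation gives $|\Delta\phi|\leq C_1\,A\,(1+|y|^2)^{-\beta-1}$. The crucial input is the strict gap
\[
\beta+1-\tfrac{\alpha}{2}=\frac{\alpha(2-q)}{2(q-1)}>0,
\]
which is built in by $1<q<2$: the Laplacian term decays \emph{strictly faster} than the target $(1+|y|^2)^{-\alpha/2}$. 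First fix $A$ with $c_2^{2-q}A^{q-1}\geq 2$, then choose $R_0$ large so that the nonlinear term dominates $|\Delta\phi|$ on $\{|y|\geq R_0\}$; enlarging $A$ if necessary to enforce $\phi\geq\|w\|_\infty$ on $\partial B_{R_0}$, the comparison principle on the exterior yields $w\leq\phi$ outside $B_{R_0}$, and the uniform bound on $B_{R_0}$ completes $w(y)\leq C''(1+|y|^2)^{-\beta}$ on all of $\mathbb{R}^N$. Inserting this into \eqref{comparison} and restricting to $|x|\geq T$ gives $|u_0(x)|\leq C''T^{2/(2-q)+2\beta}|x|^{-\alpha/(q-1)}$, i.e.\ \eqref{bum}. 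The main obstacle is exactly in Step (iii): when $\alpha/(q-1)+2>N$ the term $-\Delta\phi$ can become negative at infinity, so it is a priori unclear that $\phi$ is a supersolution. The strict gap $\beta+1>\alpha/2$, which fails at $q=2$ and is therefore the essential use of $q<2$, is what allows the nonlinear term to absorb this sign after a suitable choice of $A$.
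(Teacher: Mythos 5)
Your proposal is correct in substance, and it follows a genuinely different route from the paper's for two of the three steps, which is worth spelling out. For the $L^\infty$ bound, you propose a global Moser iteration using the good sign of the zero-order term; the paper instead quotes local boundedness from \cite[Chapter 7]{Gi} and then extends it globally by testing the \emph{minimality} of $u_0$ against the truncation $\min\{u_0,M\}$, with $M$ calibrated so that $f \le c_2^{2-q}M^{q-1}$ on $\{u_0>M\}$ thanks to the decay of $f$. Both work; your iteration relies only on $f\in L^r$ with $r>N/2$, while the paper's truncation directly exploits the pointwise decay of $f$. For the comparison step \eqref{comparison}, your computation $-\Delta v_T + c_2^{2-q}v_T^{q-1}=T^{p-1+\alpha}(T^2+|x|^2)^{-\alpha/2}$ matches the paper's $h_T$, and the mechanism is the same: the paper again tests minimality against $\min\{u_0,w_T\}$ and concludes by uniqueness, whereas you use Kato's inequality and a direct weak comparison. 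These are essentially equivalent; the paper's variational phrasing avoids invoking Kato explicitly, while yours handles the signed case without the paper's preliminary reduction to $f\ge 0$. The real divergence is in the decay step \eqref{bum}. The paper proves a separate Lemma \ref{lm:decay} (reduction to a radial ODE for $w(\varrho)=\varrho^{(N-1)/2}u(\varrho)$, a bootstrap on powers $\beta_k$, and then a contradiction/maximum-principle argument to pass from $|x|^{-\gamma+\varepsilon}$ to the sharp exponent), applied to the rescaled $w_T$. You instead build the sharp power barrier $\phi(y)=A(1+|y|^2)^{-\alpha/(2(q-1))}$ directly for $w$ and push it through the exterior comparison principle; this is shorter and more elementary, and the strict inequality $\beta+1>\alpha/2$ you isolate (which indeed degenerates as $q\to 2$) is precisely why the nonlinear term can absorb the possibly wrong sign of $-\Delta\phi$.

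Two points deserve slightly more care than your sketch gives them. First, in the barrier step the choice of $A$ and $R_0$ are coupled: enlarging $A$ to ensure $\phi\ge\|w\|_\infty$ on $\partial B_{R_0}$ increases the linear-in-$A$ term $|\Delta\phi|$ but only the sublinear term $c_2^{2-q}A^{q-1}$ on the good side, so a priori you may need to enlarge $R_0$ again. The fix is to determine $A$ from $R_0$ by $A=\|w\|_\infty(1+R_0^2)^\beta$, after which the supersolution condition at $|y|\ge R_0$ reduces to a one-parameter inequality in $R_0$ alone, satisfied for $R_0$ large precisely because $\beta+1>\alpha/2$; alternatively one verifies directly that the implicit scheme converges since the resulting exponent in $A$ is $<1$. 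Second, the exterior comparison uses $(w-\phi)_+$ as a test function; this requires both that $w\to 0$ at infinity (which follows from $w$ being radial in $W^{1,2}_0\cap L^q$ and the Strauss lemma, Lemma \ref{lm:strauss}) and that $\phi\in W^{1,2}$ on the exterior, i.e.\ $\beta>(N-2)/4$, which does hold under $\alpha>(N+2)/2$ and $q<2$. Also, your identity $\beta+1-\alpha/2=\alpha(2-q)/(2(q-1))$ should read $\beta+1-\alpha/2=\alpha(2-q)/(2(q-1))+1$; the conclusion that it is positive is unchanged. None of these are gaps in the approach, but they are the places where a careful write-up must not be silent.
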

\begin{proof}
We first observe that if $f \ge0$, then the unique minimizer $u_0$ is positive since $J_{p,f}(|u|)\le J_{p,f}(u)$. We also notice that it is not restrictive  to prove the result for $f\ge0$. Indeed, if $f$ is not positive, by using the minimality of $u_0$ and the fact that $f\le |f|$, it is not difficult to see that
\[
|u_0|\le \widetilde u_0,
\]
where $\widetilde u_0$ is the unique minimizer of $J_{p,|f|}$.
We thus assume $f\ge 0$ in what follows and divide the proof in three parts.
\vskip.2cm\noindent
{\it Boundedness of $u_0$}. 
The integrability of $f$ already implies that $u_0\in L^\infty_{loc}(\mathbb{R}^N)$, see \cite[Chapter 7]{Gi}. Also, since $u_0$ solves
\[
-\Delta u_0+c_2^{2-q}\, u_0^{q-1}=f,
\]
it is the unique minimizer of the functional
\[
\widetilde J_{p,f}(u)=\frac{1}{2}\, \int_{\mathbb{R}^N} |\nabla u|^2\, dx+\frac{c_2^{2-q}}{q}\, \int_{\mathbb{R}^N} |u|^{q}\, dx-\int_{\mathbb{R}^N} f\,u\, dx,
\]
as well. Let $M>0$, by testing the minimality of $u_0$ against $\varphi_M=\min\{u_0,M\}$, we get 
\[
\begin{split}
\widetilde J_{p,f}(\varphi_M)-\widetilde J_{p,f}(u_0)&\le \frac{c_2^{2-q}}{q}\,\int_{\{u_0>M\}} M^{q}\, dx-\frac{c_2^{2-q}}{q}\, \int_{\{u_0> M\}} u_0^{q}\, dx+\int_{\{u_0>M\}} f\, (u_0-M)\, dx\\
&= -\frac{c_2^{2-q}}{q}\, \int_{\{u_0>M\}} \left(u_0^q-M^q\right)\, dx+\int_{\mathbb{R}^N} f\, (u_0-M)_+\, dx.\\
\end{split}
\]
We then observe that 
\[
u_0^{q}-M^q\ge q\,M^{q-1}\, (u_0-M).
\]
If we take 
\[
M=\max\left\{\|u_0\|_{L^\infty(B_R)},\, \left(c_2^{q-2}\,C\, R^{-\alpha}\right)^{1/(q-1)}\right\},
\]
then we have
\[
\{u_0>M\}\subset \{x\, :\, |x|\ge R\},
\]
so that
\[
f(x)\le C\, |x|^{-\alpha}\le C\, R^{-\alpha}\le c_2^{2-q}\, M^{q-1},\qquad \mbox{ on } \{u_0>M\}.
\] 
In conclusion, by using the choice of $M$ and the decay of $f$, we get
\[
\widetilde J_{p,f}(\varphi_M)-\widetilde J_{p,f}(u_0)\le \int_{\mathbb{R}^N} \left(f-c_2^{2-q}\, M^{q-1}\right)\, (u_0-M)_+\, dx\le 0. 
\]
By uniqueness of the minimizer of $u_0$ we get that $\varphi_M=u_0$ and thus $u_0\le M$ in $\mathbb{R}^N$.
\vskip.2cm\noindent
{\it Comparison}. In order to prove the second assertion, we start observing that $\alpha>(N+2)/2$ guarantees
\[
h(x)=\left(1+|x|^2\right)^{-\alpha/2}\in L^{(2^*)'}(\mathbb{R}^N)\subset W^{-1,2}(\mathbb{R}^N),
\]
thus a function $w$ minimizing $\mathcal{J}$ exists, is unique and radially decreasing. Moreover, it solves
\[
-\Delta w+c_2^{2-q}\, w^{q-1}=h,\qquad \mbox{ in }\mathbb{R}^N.
\]
The rescaled function 
\[
w_t(x)=t^{2/(2-q)}\, w\left(\frac{x}{t}\right),\qquad t>0,
\]
then solves
\[
-\Delta w_t+c_2^{2-q}\,w_t^{q-1}=h_t,\qquad \mbox{ where }\quad h_t(x)=t^{2\,(q-1)/(2-q)}\, h\left(\frac{x}{t}\right).
\]
Since by the first part of the proof $u_0$ is bounded and $w_{t_1}\ge w_{t_0}$ for $t_1\ge t_0$, we can find a $T_0$ sufficiently large such that 
\begin{equation}
\label{wtu0}
w_{t}(x)\ge u_0(x),\qquad \mbox{ for } |x|\le R \quad \mbox{ and }\quad\ t\ge T_0.
\end{equation}
In addition, if we define
\[
T_1=\max\left\{R,(2^{\alpha/2}\, C\, R^{-\alpha})^{(2-q)/(2\,q-2)}\right\},
\]
by hypothesis on $f$ we get\footnote{For the first inequality, we just use that for $x,t\ge R$ we have
\[
\left(1+\frac{|x|^2}{t^2}\right)^\frac{\alpha}{2}\le \left(1+\frac{|x|^2}{R^2}\right)^\frac{\alpha}{2}\le 2^{\alpha/2}\, \frac{|x|^\alpha}{R^\alpha}.
\]}
\begin{equation}
\label{htf}
h_t(x)=\frac{t^{\frac{2\,q-2}{2-q}+\alpha}}{\left(t^2+|x|^2\right)^{\alpha/2}}\ge \frac{t^\frac{2\,q-2}{2-q}\,R^\alpha}{2^{\alpha/2}\,|x|^\alpha}\ge \frac{C}{|x|^\alpha}\ge f(x),\qquad \mbox{ for } |x|>R\quad \mbox{ and }\quad t\ge T_1.
\end{equation}
We now define $T=\max\{T_0,T_1\}$ and test the minimality of $u_0$ against the function $\psi_{T}=\min\{u_0,w_{T}\}$. Thus we get
\[
\begin{split}
0\ge \widetilde J_{p,f}(u_0)-\widetilde J_{p,f}(\psi_{T})&\ge \frac{1}{2}\, \int_{\{w_{T}<u_0\}} \left(|\nabla u_0|^2-|\nabla w_{T}|^2\right)\, dx\\
&+\frac{c_2^{2-q}}{q}\, \int_{\{w_{T}<u_0\}} \left(u_0^q-w_{T}^q\right)\, dx\\
&-\int_{\{w_{T}<u_0\}} f\, (u_0-w_{T})\, dx.
\end{split}
\]
By using the convexity of the functions involved and the equation solved by $w_{T}$, we thus get
\[
\begin{split}
0\ge \int_{\mathbb{R}^N} \langle \nabla w_{T},\nabla (u_0-w_{T})_+\rangle\,& dx+c_2^{2-q}\, \int_{\mathbb{R}^N} w_T^{q-1}\, (u_0-w_{T})_+\, dx\\
&-\int_{\mathbb{R}^N} f\, (u_0-w_{T})_+\, dx=\int_{\mathbb{R}^N} (h_T-f)\, (u_0-w_{T})_+\, dx.
\end{split} 
\]
By combining \eqref{wtu0} and \eqref{htf}, we have
\[
h_T(x)\ge f(x),\qquad \mbox{ on } \{u_0>w_T\},
\]
thus we would obtain that $\psi_T$ is a minimizer of $\widetilde J_{p,f}$. By uniqueness, $\psi_T=u_0$ and thus \eqref{comparison} holds true.
\vskip.2cm\noindent
{\it Decay estimate for $u_0$}. Finally, the estimate \eqref{bum} simply follows from \eqref{comparison} and Lemma \ref{lm:decay} below, applied to the rescaled function $w_T$.
\end{proof}
\begin{oss}
A different way to compare $u_0$ with a radial function and obtain \eqref{bum} could be that of using symmetrization techniques. More precisely, one could look at the radial solution of the {\it symmetrized problem}
\[
-\Delta v+c_2^{2-q}\, v^{q-1}=f^*,
\] 
where $f^*$ denotes the {\it Schwarz rearrangement} of $f$ (see \cite[Chapter 2]{hen06} for the relevant definition). There is a huge literature on results which permit to compare $u_0^*$ and $v$ (see for example \cite{ALT,Ta}), but the presence of the nonlinear term $c_2^{2-q}\, v^{q-1}$ complicates the task. An interesting result covering this case is contained in the recent paper \cite{HR} by Hamel and Russ, which however deals with the case of a {\it bounded domain} $\Omega$. Since it is not clear whether this strategy could work or not, we decided to take a different path, which just uses the minimality of $u_0$. We also refer to the related discussion in \cite[Sections 5.2 and 5.3]{Va}.
\end{oss}
\begin{lm}[Sharp decay estimate]
\label{lm:decay}
Let $N\ge 3$, $1<q\le 2$ and 
\(
\alpha>(N+2)/2.
\)
Let us suppose that $u\in W^{1,2}_0(\mathbb{R}^N)\cap L^{q}(\mathbb{R}^N)$ is a smooth positive radial function verifying
\[
-\Delta u+a\, u^{q-1}\le b\,|x|^{-\alpha},\qquad \mbox{ for } |x|\ge R,
\]
for some $a,R>0$ and $b\ge 0$.
Then there exists a constant $C>0$ depending on $\alpha,q,R,a,\,\|u\|_{W^{1,2}_0(\mathbb{R}^N)}$ and $\|u\|_{L^q(\mathbb{R}^N)}$ such that 
\begin{equation}
\label{olè}
0\le u(x)\le C\, |x|^{-\alpha/(q-1)},\qquad \mbox{ for } |x|\gg 1.
\end{equation}
\end{lm}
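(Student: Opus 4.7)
The natural ansatz is the pointwise barrier $v(r) = K\,r^{-s}$ with $s = \alpha/(q-1)$, since it makes the zeroth-order term $a v^{q-1} = a K^{q-1} r^{-\alpha}$ match the forcing $b r^{-\alpha}$. My plan is to build a supersolution of this form on an exterior region $\{|x|\ge R'\}$ and then deduce $u\le v$ by comparison. Before that, I need a preliminary qualitative decay so that comparison at infinity is justified: since $u$ is smooth, positive, and $\nabla u\in L^{2}$, for radial $u$ the identity $u(r_{1})-u(r_{2}) = \int_{r_{2}}^{r_{1}} u'(s)\,ds$ combined with the Cauchy--Schwarz bound $\int_{R}^{\infty} s^{-(N-1)}\,ds = R^{-(N-2)}/(N-2)$ (valid for $N\ge 3$) shows that $u$ is Cauchy at infinity; since also $u\in L^{q}(\mathbb{R}^{N})$ the limit must be $0$, and an analogous argument bounds $u$ on all of $\mathbb{R}^{N}$ by a constant $M$ depending on $\|\nabla u\|_{L^{2}}$ and $\|u\|_{L^{q}}$.

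Next I compute $-\Delta v = K\,s(N-s-2)\,r^{-s-2}$, so
\[
-\Delta v + a\,v^{q-1} - b\,r^{-\alpha} = (a\,K^{q-1} - b)\,r^{-\alpha} - K\,s\,(s+2-N)_{+}\,r^{-s-2}.
\]
The relation $\alpha < s+2$ (equivalent to $\alpha(2-q) < 2(q-1)$, which is automatic since $q\le 2$) makes $r^{-s-2}$ asymptotically negligible compared to $r^{-\alpha}$. Thus for any $K$ with $a K^{q-1} > b$, one can choose $R'$ so large that the bracket is nonnegative for all $r\ge R'$, making $v$ a supersolution there. The boundary condition $v(R')\ge u(R')$ becomes $K\ge u(R')\,R'^{s}$, which is ensured by $K\ge M\,R'^{s}$.

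The delicate point is that these two requirements pull in opposite directions: increasing $K$ strengthens the boundary condition but weakens the supersolution condition (because $K\mapsto K^{q-1}$ is sublinear while the offending term is linear in $K$). The resolution is quantitative: taking $K = M\,R'^{s}$ the supersolution inequality at $r=R'$ reads
\[
a\,M^{q-1}\,R'^{\alpha} - b \;\ge\; M\,s\,(s+2-N)_{+}\,R'^{\alpha-2},
\]
which holds for all $R'$ sufficiently large (depending on $a,b,M,N,\alpha,q$), because the left-hand side grows like $R'^{\alpha}$ against the $R'^{\alpha-2}$ on the right. One then checks that the same $R'$ also gives the supersolution condition at every $r\ge R'$, using once more $\alpha-s-2<0$. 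This quantitative matching is the main obstacle.

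Finally, I invoke the comparison principle on $\Lambda := \{|x|>R'\}\cap\{u>v\}$: on $\Lambda$ both functions are smooth with $u>v>0$, and subtracting the two differential inequalities gives
\[
-\Delta(u-v) \;\le\; -a\,(u^{q-1}-v^{q-1}) \;<\; 0,
\]
since $t\mapsto t^{q-1}$ is strictly increasing for $t>0$. Thus $u-v$ is strictly subharmonic on $\Lambda$, equals $0$ on the portion of $\partial\Lambda$ inside $\{|x|>R'\}$, is $\le 0$ on $\{|x|=R'\}$ by our boundary choice, and tends to $0$ at infinity (since $u\to 0$ and $v>0$). The strong maximum principle forces $\Lambda=\emptyset$, so $u(x)\le K|x|^{-\alpha/(q-1)}$ for $|x|\ge R'$, which is \eqref{olè}.
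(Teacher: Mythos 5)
Your proof is correct, and it takes a genuinely different and more economical route than the paper's. The paper proceeds in two stages: first a bootstrap argument (substituting $w=\varrho^{N-1}u^2$, reducing to an ODE comparison, and iterating to gain arbitrarily small $\varepsilon$ in the exponent), and then a contradiction argument — assuming $u|x|^{\alpha/(q-1)}\to\infty$ allows the $b|x|^{-\alpha}$ forcing to be absorbed into the zeroth-order term, upgrades Part~1 to superpolynomial decay, contradicts the assumption, and only then establishes $u\le\widetilde A|x|^{-\alpha/(q-1)}$ via a maximum principle on annuli $\{r_k\le|x|\le r_{k+1}\}$ where the comparison at $|x|=r_k$ comes from the $\liminf$. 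You instead build a global exterior barrier $v(r)=Kr^{-s}$, $s=\alpha/(q-1)$, in one shot. The crucial observation that makes this work — that $\alpha<s+2$ whenever $q\le 2$, so $-\Delta v$ is of strictly lower order than $av^{q-1}$ and $br^{-\alpha}$ — is the same structural fact the paper exploits implicitly in its annular comparison, but you front-load it, and it dissolves the feedback loop between $K$ and $R'$ because $K^{q-1}$ scales like $R'^{\alpha}$ while the Laplacian of the barrier only produces an $R'^{\alpha-2}$ term. Your preliminary steps ($u\to 0$ at infinity and a pointwise bound $M$ on $\{|x|\ge R\}$) are essentially the paper's Lemma~\ref{lm:strauss}, and your final comparison principle on the possibly unbounded set $\Lambda$ is legitimate since $u-v$ is bounded and tends to $0$, so one can exhaust by $\Lambda\cap B_{R_\varepsilon}$ and let $\varepsilon\to 0$. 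What your approach buys is brevity and transparency; what the paper's approach buys is the intermediate weak decay estimate \eqref{weakdecay} as a stand-alone statement, which the authors may have wanted on record, and it avoids committing to the exact barrier exponent until the very end.
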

\begin{proof}
We divide the proof in two parts: in the first we prove that if 
\[
-\Delta u +a\, u^{q-1}\le b\, |x|^{-\gamma},\qquad |x|\ge R,
\]
for $\gamma>(N+2)/2$, then the following weaker decay estimate holds
\begin{equation}
\label{weakdecay}
0\le u(x)\le C_\varepsilon\, |x|^{-\gamma+\varepsilon},\qquad \mbox{ for } |x|\gg 1,
\end{equation}
for every $\varepsilon>0$.
Then we will get \eqref{olè} by using a contradiction argument and a suitable maximum principle.
\vskip.2cm\noindent
\underline{\it Part 1: weak decay.}
We first observe that Lemma \ref{lm:strauss} below already implies that
\[
u(\varrho)\lesssim \varrho^{-\beta_0},\qquad \mbox{ where }\quad\beta_0=(N-1)\,\frac{2}{2+q}.
\]
If $\gamma\le \beta_0$, then \eqref{weakdecay} holds and there is nothing to prove. We can thus assume that $\gamma>\beta_0$.
We are going to prove \eqref{weakdecay} by using a recursive argument, namely we will prove the following implication:
\begin{equation}
\label{boothstrap}
|u(\varrho)|\le c\,\varrho^{-\beta},\quad \mbox{ for }\varrho>r_0 \mbox{ and } \beta\ge \beta_0\qquad\Longrightarrow\qquad |u(\varrho)|\le c'\, \varrho^{-\frac{\gamma+\beta}{2}},\quad \mbox{ for } \varrho>r_0'.
\end{equation}
By starting from $\beta=\beta_0$ and iterating a finite number of times \eqref{boothstrap}, 
we will get the desired result. Indeed, observe that since $\gamma>\beta_0$, the sequence 
\[
\beta_{i+1}=\frac{\gamma+\beta_i}{2}
\]
is monotone increasing and converges to $\gamma$. 
\par
To prove \eqref{boothstrap} we adapt a classical argument that can be found for example in \cite[Lemma 2]{BL}, but some care is needed in order to deal with the non-autonomous term. Also, for notational simplicity we give the proof just for $a=b=1$.
Then by hypothesis we have that $u$ verifies
\[
u''(\varrho)+\frac{N-1}{\varrho}\, u'(\varrho)-u(\varrho)^{q-1}\ge -\varrho^{-\gamma},\qquad \varrho\ge R.
\]
The function $v(\varrho)=\varrho^{(N-1)/2}\, u(\varrho)$ verifies
\[
v''\ge \left[\frac{C_N}{\varrho^2}+(u+1)^{q-2}\right]\, v-\varrho^{\frac{N-1}{2}-\gamma},\qquad \varrho\ge R.
\]
We then make the further substitution $w=v^2$
\[
\frac{w''}{2}\ge (v')^2+w\,\left[\frac{C_N}{\varrho^2}+(1+u)^{q-2}\right]-\varrho^{\frac{N-1}{2}-\gamma}\, \sqrt{w},\qquad \varrho\ge R.
\]
As we already know that $u\to 0$ as $\varrho \to 0$, we have
\[
\frac{C_N}{\varrho^2}+(1+u)^{q-2}\ge \frac{m}{2},\qquad \varrho\ge  R,
\]
where $m>0$ is a suitable constant. 
Thus we obtain that $w$ verifies
\begin{equation}
\label{ODE}
w''-m\,w\ge -2\,\varrho^{\frac{N-1}{2}-\gamma}\, \sqrt{w},\qquad \varrho\ge R.
\end{equation}
We then set
\[
z(\varrho)=e^{-\sqrt{m}\,\varrho}\, \left(w'(\varrho)+\sqrt{m}\,w(\varrho)\right),\qquad \varrho>R,
\]
thus we get
\[
z'(\varrho)=e^{-\sqrt{m}\, \varrho}\, \left(w''(\varrho)-m\,w(\varrho)\right)\ge -2\,e^{-\sqrt{m}\,\varrho}\,\varrho^{\frac{N-1}{2}-\gamma}\, \sqrt{w},\qquad \varrho>R.
\]
where we used \eqref{ODE}. In order to prove \eqref{boothstrap}, we assume that 
\[
|u(\varrho)|\le c\, \varrho^{-\beta},\qquad \varrho>r_0\ge R,
\] 
for $\beta\ge \beta_0$, then by recalling that $w=\varrho^{N-1}\, u^2$ we get
\[
\sqrt{w(\varrho)}\le c\,\varrho^{\frac{N-1}{2}-\beta},\qquad \varrho>r_0.
\]
Thus for $z'$ we can infer
\begin{equation}
\label{zbelow}
z'(\varrho)\ge -c\,e^{-\sqrt{m}\,\varrho}\, \varrho^{N-1-\gamma-\beta},\qquad \varrho>r_0,
\end{equation}
Let us now take the (negative) function $\eta$ defined by
\[
\eta(\varrho)=-c\,\int_\varrho^{+\infty} \left[e^{-\sqrt{m}\,s}\, s^{N-1-\gamma-\beta}\right]\, ds,\qquad \varrho>r_0.
\]
Observe that $\eta$ is strictly increasing and $\eta$ goes to $0$ as $\varrho$ goes to $\infty$.
Then from \eqref{zbelow} we get
\[
z'(\varrho)\ge -c\,e^{-\sqrt{m}\,\varrho}\, \varrho^{N-1-\gamma-\beta}= -\eta'(\varrho),\qquad \varrho>r_0,
\]
that is $z+\eta$ is non decreasing on $(r_0,+\infty)$. Let us suppose that there exists $r_1>r_0$ such that
\[
z(r_1)+\eta(r_1)>0,
\]
then by monotoncitiy of $z+\eta$ we obtain
\[
e^{\sqrt{m}\,\varrho}\big(z(\varrho)+\eta(\varrho)\big)\ge e^{\sqrt{m}\,\varrho}\,\big( z(r_1)+\eta(r_1)\big)>0,\qquad \varrho>r_1.
\]
The previous gives a contradiction, since the right-hand side is not integrable on $(r_1,+\infty)$, while the left-hand side is. Indeed, observe that\footnote{We have that $w=\varrho^{N-1}\, u^2$ which is integrable near $\infty$ since $u\in W^{1,2}_0(\mathbb{R}^N)\cap L^q(\mathbb{R}^N)\subset L^2(\mathbb{R}^N)$. On the other hand, $w'(\varrho)=2\,v(\varrho)\, v'(\varrho)$, with
\[
v(\varrho)=\sqrt{w(\varrho)}\in L^2((r_1,+\infty)),
\]
and
\[
|v'(\varrho)|\lesssim \varrho^\frac{N-3}{2}\, |u(\varrho)|+\varrho^\frac{N-1}{2}\, |u'(\varrho)|\in L^2((r_1,+\infty)).
\]}
\[
e^{\sqrt{m}\, \varrho} \, z(\varrho)=w'(\varrho)+\sqrt{m}\,w(\varrho)\in L^1((r_1,+\infty)),
\]
and $e^{\sqrt{m}\,\varrho}\, \eta$ is integrable at infinity by construction, since
\begin{equation}
\label{uh!}
e^{\sqrt{m}\,\varrho}\, \eta(\varrho)\simeq \frac{\displaystyle\int_\varrho^{+\infty} \left[e^{-\sqrt{m}\,s}\, s^{N-1-\gamma-\beta}\right]\, ds}{e^{-\sqrt{m}\,\varrho}}\simeq  \varrho^{N-1-\gamma-\beta},
\end{equation}
and the latter is integrable on $(r_1,+\infty)$ thanks to the fact that $\gamma> (N+2)/2$ and $\beta\ge \beta_0$. From the previous argument, we get
\[
z(\varrho)\le -\eta(\varrho),\qquad \varrho>r_0,
\]
that is
\[
\left(e^{\sqrt{m}\,\varrho}\, w(\varrho)\right)'=e^{\sqrt{m}\, \varrho} \, \big(w'(\varrho)+\sqrt{m}\, w(\varrho)\big)=e^{2\,\sqrt{m}\,\varrho}\, z(\varrho)\le -e^{2\,\sqrt{m}\,\varrho}\,\eta(\varrho),\qquad \varrho>r_0.
\]
This in turn implies
\[
0\le w(\varrho)\le e^{-\sqrt{m}\, \varrho}\left[C-\int_{R}^{\varrho} e^{2\,\sqrt{m}\,s}\,\eta(s)\, ds\right],\qquad \varrho>r_0.
\]
By recalling the definition of $\eta$, we get
\[
e^{-\sqrt{m}\,\varrho}\, \int_{R}^{\varrho} e^{2\,\sqrt{m}\,s}\,\eta(s)\, ds\simeq \frac{e^{2\,\sqrt{m}\,\varrho}\, \eta(\varrho)}{e^{\sqrt{m}\,\varrho}}\simeq e^{\sqrt{m}\,\varrho}\,\eta(\varrho)\simeq \varrho^{N-1-\gamma-\beta},\qquad \varrho\gg 1,
\] 
thanks to \eqref{uh!}. This finally implies the following decay of $w$ at infinity
\[
w(\varrho)\le C\, \varrho^{N-1-\gamma-\beta},\qquad \varrho\gg 1,
\]
and by going back to $u$, we can finally infer
\[
u(\varrho)=\sqrt{w}\, \varrho^\frac{1-N}{2}\le \sqrt{C}\, \varrho^\frac{1-N}{2}\,\varrho^{\frac{N-1}{2}-\frac{\gamma}{2}-\frac{\beta}{2}}\,=\sqrt{C}\, \varrho^{-\frac{\gamma+\beta}{2}},\qquad \varrho\gg 1.
\]
This concludes the proof of \eqref{boothstrap} and thus of \eqref{weakdecay}, as already explained.
\vskip.2cm\noindent
\underline{\it Part 2: sharp decay.} We now prove \eqref{olè}. Let us assume by contradiction that $u$ verifies
\begin{equation}
\label{assurdo?}
\lim_{|x|\to\infty} u(x)\, |x|^{\alpha/(q-1)}=+\infty.
\end{equation}
This implies that for every $\varepsilon>0$ there exists a radius $R_\varepsilon$ such that
\[
|x|^{-\alpha}\, <\varepsilon\, u(x)^{q-1},\qquad |x|\ge R_\varepsilon.
\]
By taking $\varepsilon=a/2$, we thus get that $u$ verifies
\[
-\Delta u+\frac{a}{2}\, u^{q-1}\le 0,\qquad |x|\ge R_{\varepsilon}.
\]
Thus by \eqref{weakdecay} we get that $u=o(|x|^{-\gamma})$ for every $\gamma>0$, as $|x|$ goes to $\infty$. This clearly contradicts \eqref{assurdo?}, thus 
\[
0\le \liminf_{|x|\to \infty} u(x)\, |x|^{\alpha/(q-1)}<+\infty.
\]
This implies that there exists a sequence $\{r_k\}_{k\in\mathbb{N}}$ of radii converging to $\infty$ and a constant $A>0$ such that
\[
u(x)\le A\, |x|^{-\alpha/(q-1)},\qquad \mbox{ for }\ k \in\mathbb{N}\ \mbox{ and } |x|=r_k.
\]
We now take $\widetilde {u}(x)=\widetilde A\, |x|^{-\alpha/(q-1)}$, where $\widetilde A\ge A$ is a constant large enough such that there exists a radius $\widetilde R\gg 1$ for which
\[
-\Delta \widetilde u+a\, \widetilde u^{q-1}\ge b\, |x|^{-\alpha},\qquad \mbox{ for } |x|\ge \widetilde R. 
\]
We take $k_0=\min\{k\, :\, r_k\ge \widetilde R\}$, then we claim that
\begin{equation}
\label{hamel}
u(x)\le \widetilde u(x),\qquad \mbox{ for } r_{k}\le |x|\le r_{k+1}\quad \mbox{ and }\quad k\ge k_0. 
\end{equation}
If \eqref{hamel} were not true, there would exist a radius $r_k$ such that
\[
\widetilde u(y)-u(y):=\min_{r_k\le |x|\le r_{k+1}} \big(\widetilde u(x)-u(x)\big)<0.
\]
Since by construction we have 
$$
0\le \widetilde u(x)-u(x)\ \mbox{ for } |x|=r_k\qquad \mbox{ and }\qquad  0\le \widetilde u(x)-u(x)\ \mbox{ for } |x|=r_{k+1},
$$ 
then $y$ would be an interior minimum point of $\widetilde u-u$. By using this and the differential inequalities verified by $u$ and $\widetilde u$, we would get
\[
0\le \Delta \widetilde u(y)-\Delta\, u(y)\le a\, \big(\widetilde u(y)^{q-1}-u(y)^{q-1}\big)<0,
\]
thanks to the strict monotonicity of $t\mapsto t^{q-1}$. This gives the desired contradiction, thus \eqref{hamel} holds true and the decay estimate on $u$ is proved.
\end{proof}
\begin{oss}
Observe that the last part of the previous proof also shows that estimate \eqref{olè} is the best possible.
\end{oss}
In the previous proof we used the following result, which is essentially due to Strauss, see \cite[Radial Lemma 1]{St}. The statement is slightly more general (the original case corresponds to $q=2$), the proof just relies upon H\"older inequality. 
\begin{lm}[Strauss lemma]
\label{lm:strauss}
Let $N\ge 2$ and $u\in W^{1,2}_0(\mathbb{R}^N)\cap L^q(\mathbb{R}^N)$ be a radial function, where $0<q<\infty$. Then we have
\begin{equation}
\label{strauss}
|u(x)|^{2+q}\le S_{q,N}\, |x|^{-2\,(N-1)}\, \left(\int_{\mathbb{R}^N}
|\nabla u|^2\, dx\right)\,\left(\int_{\mathbb{R}^N} |u|^q\, dx\right),\qquad
x\in\mathbb{R}^N,
\end{equation}
where
\[
S_{q,N}=\left(\frac{1}{N\,\omega_N}\, \frac{2+q}{2}\right)^2
\]
and $\omega_N$ is the measure of the $N$-dimensional ball of radius $1$.
\end{lm}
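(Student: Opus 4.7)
The plan is to reduce to smooth radial functions by density and then exploit the fundamental theorem of calculus together with the radial symmetry. Since $u$ is radial, write $u(x) = u(r)$ with $r = |x|$; standard density arguments let us assume $u \in C^\infty_c(\mathbb{R}^N)$, so that the radial profile vanishes for large $r$ and we have the integral identity
\[
|u(r)|^{(q+2)/2} = -\int_r^{+\infty} \frac{d}{ds}\bigl(|u(s)|^{(q+2)/2}\bigr)\, ds = -\frac{q+2}{2}\int_r^{+\infty} |u(s)|^{q/2-1}\, u(s)\, u'(s)\, ds.
\]

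The second step is to bring the weight $r^{N-1}$ inside the integral: since $r \le s$ throughout the range of integration, one has $r^{N-1} \le s^{N-1}$ and therefore
\[
r^{N-1}\, |u(r)|^{(q+2)/2} \le \frac{q+2}{2}\int_r^{+\infty} s^{N-1}\, |u(s)|^{q/2}\, |u'(s)|\, ds.
\]
Now I would apply the Cauchy--Schwarz inequality to the right-hand side, splitting the integrand as $(s^{(N-1)/2} |u(s)|^{q/2}) \cdot (s^{(N-1)/2} |u'(s)|)$, which yields
\[
r^{N-1}\, |u(r)|^{(q+2)/2} \le \frac{q+2}{2}\,\left(\int_0^{+\infty} s^{N-1}\, |u(s)|^q\, ds\right)^{1/2}\left(\int_0^{+\infty} s^{N-1}\, |u'(s)|^2\, ds\right)^{1/2}.
\]

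The final step is simply to rewrite the one-dimensional weighted integrals as full $N$-dimensional ones. By passing to polar coordinates, for a radial function
\[
\int_0^{+\infty} s^{N-1}\, |u(s)|^q\, ds = \frac{1}{N\,\omega_N}\,\int_{\mathbb{R}^N} |u|^q\, dx,\qquad \int_0^{+\infty} s^{N-1}\, |u'(s)|^2\, ds = \frac{1}{N\,\omega_N}\,\int_{\mathbb{R}^N} |\nabla u|^2\, dx,
\]
so that substituting and squaring produces exactly \eqref{strauss} with the claimed constant $S_{q,N} = \bigl(\tfrac{1}{N\omega_N}\,\tfrac{q+2}{2}\bigr)^2$. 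The only delicate point — really a minor one — is justifying the fundamental theorem of calculus step for a general $u \in W^{1,2}_0(\mathbb{R}^N)\cap L^q(\mathbb{R}^N)$: this follows because radial Sobolev functions admit an absolutely continuous representative on $(0,+\infty)$, so the computation above passes to the limit under the approximation $C^\infty_c \to W^{1,2}_0\cap L^q$ without any substantive difficulty.
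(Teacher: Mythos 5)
Your proof is correct and follows essentially the same route as the paper: the fundamental theorem of calculus for $|u|^{(q+2)/2}$, the monotonicity of $t^{N-1}$ to bring the weight inside the tail integral, Cauchy--Schwarz, conversion to polar coordinates, and a density argument. The only cosmetic difference is that you multiply by $r^{N-1}$ and push it through the integral, while the paper factors out $\varrho^{1-N}$; these are the same estimate.
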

\begin{proof}
Let $u\in C^\infty_0(\mathbb{R}^N)$ be a radial function, for every $p>1$ we have (with a small abuse of notation)
\[
|u(\varrho)|^p=-\int^{+\infty}_\varrho \frac{d}{dt} |u(t)|^p\, dt=-p\, \int_\varrho^{+\infty} u'(t)\, |u(t)|^{p-2}\, u(t)\, dt,\qquad \varrho>0.
\]
By taking $p=(2+q)/2$, we thus get
\[
\begin{split}
|u(\varrho)|^{(2+q)/2}&\le \frac{2+q}{2}\, \int_\varrho^{+\infty} |u'(t)|\, |u(t)|^{q/2}\, dt\\
&\le \frac{2+q}{2}\, \varrho^{1-N}\, \int_\varrho^{+\infty} \left(|u'(t)|\,t^{(N-1)/2}\right)\, \left(|u(t)|^{q/2}\, t^{(N-1)/2}\right)\, dt,\qquad \varrho>0.
\end{split}
\]
We now use H\"older inequality, then for every $\varrho>0$
\begin{equation}
\label{forte}
|u(\varrho)|^{(2+q)/2}\le \frac{2+q}{2}\, \varrho^{1-N}\, \left(\int_\varrho^{+\infty} |u'(t)|^2\,t^{N-1}\, dt\right)^{1/2}\, \left(\int_\varrho^{+\infty} |u(t)|^q\, t^{N-1}\, dt\right)^{1/2}.
\end{equation}
By noticing that for a radial function 
\[
\left(\int_{\mathbb{R}^N} |\nabla u|^2\, dx\right)^{1/2}=\sqrt{N\,\omega_N}\, \left(\int_0^{+\infty} |u'(t)|^2\,t^{N-1}\, dt\right)^{1/2},
\]
and
\[
\left(\int_{\mathbb{R}^N} |u|^q\, dx\right)^{1/q}=\left(N\,\omega_N\right)^{1/q}\, \left(\int_0^{+\infty} |u(t)|^q\, t^{N-1}\, dt\right)^{1/q}
\]
from \eqref{forte} we finally get \eqref{strauss} for smooth functions.
The inequality for $u\in W^{1,2}_0(\mathbb{R}^N)\cap L^q(\mathbb{R}^N)$ is obtained by a standard density argument.
\end{proof}

\begin{ack}
We warmly thank Fran\c{c}ois Hamel for some valuable suggestions on the proof of Lemma \ref{lm:decay}.
The work of the second author is part of the project 2010A2TFX2
{\it``Calcolo delle Variazioni''} funded by the Italian Ministry of
Research and University. Part of this work has been done during some visits of the first author to Pisa. The Departement of Mathematics of the University of Pisa and its facilities are kindly acknowledged.
\end{ack}


\begin{thebibliography}{100}

\bibitem{ALT} A. Alvino, P.-L. Lions, G. Trombetti, Comparison results for elliptic and parabolic equations via Schwarz symmetrization, Ann. Inst. Henri Poincar\'e, {\bf7} (1990), 37--65.

\bibitem{AH} M. S. Ashbaugh, E. M. Harrell, Maximal and minimal eigenvalues and their associated nonlinear equations, J. Math. Phys., {\bf 28} (1987), 1770--1786.

\bibitem{BL} H. Berestycki, P.-L. Lions, Nonlinear scalar field equations. I. Existence of a ground state, Arch. Rational Mech. Anal., {\bf 82} (1983), 313--345. 

\bibitem{BE} G. Bianchi, H. Egnell, A note on the Sobolev inequality, J. Funct. Anal., {\bf 100} (1991), 18--24.  

\bibitem{BraDePVel} L. Brasco, G. De Philippis, B. Velichkov, Faber-Krahn inequalities in sharp quantitative form, preprint (2013), available at {\tt http://arxiv.org/abs/1306.0392}

\bibitem{BraDepRuf} L. Brasco, G. De Philippis, B. Ruffini, Spectral optimization for the Stekloff-Laplacian: the stability issue, J. Funct. Anal., {\bf 262} (2012), 4675--4710.

\bibitem{BraPra} L. Brasco, A. Pratelli, Sharp stability of some spectral inequalities, Geom. Funct. Anal., {\bf 22} (2012), 107--135.

\bibitem{BBV} D. Bucur, G. Buttazzo, B. Velichkov, Spectral optimization problems for potentials and measures, preprint (2013), available at {\tt http://cvgmt.sns.it/paper/2245/}

\bibitem{Bu} G. Buttazzo, Spectral optimization problems, Rev. Mat. Complut., {\bf 24} (2011), 277--322.

\bibitem{BGRV} G.~Buttazzo, A.~Gerolin, B.~Ruffini, B.~Velichkov, Optimal Potentials for Schr\"odinger Operators, J. Ecole Polytechnique, {\bf 1} (2014), 71--100.

\bibitem{CF} E. A. Carlen, A. Figalli, Stability for a GNS inequality and the Log--HLS inequality, with application to the critical mass Keller-Segel equation, Duke Math. J., {\bf 162} (2013), 579--625.

\bibitem{CFL} E. A.~Carlen, R. L.~Frank, E. H.~Lieb, Stability estimates for the lowest eigenvalue of a Schr\"odinger operator, Geom. Funct. Anal., (to appear), available at {\tt http://arxiv.org/abs/1301.5032}.

\bibitem{CicLeo} M. Cicalese, G. P. Leonardi, A Selection Principle for the Sharp Quantitative Isoperimetric Inequality, Arch. Rational Mech. Anal., {\bf 206} (2012), 617--643.

\bibitem{CEFT} A. Cianchi, L. Esposito, N. Fusco, C. Trombetti, A quantitative P\'olya-Szeg\H{o} principle, J. Reine Angew. Math, {\bf 614} (2008), 153--189.

\bibitem{CFMP} A. Cianchi, N. Fusco, F. Maggi, A. Pratelli, The sharp Sobolev inequality in quantitative form, J. Eur. Math. Soc., {\bf 11} (2009), 1105--1139.  

\bibitem{Cl} J. A. Clarkson, Uniformly convex spaces, Trans. Am. Math. Soc., {\bf 40} (1936), 396--414.

\bibitem{FMP} N. Fusco, F. Maggi, A. Pratelli, The sharp quantitative isoperimetric inequality, Ann. of Math., {\bf 168} (2008), 941--980.

\bibitem{Gi} E. Giusti, Metodi diretti nel calcolo delle variazioni. (Italian) [Direct methods in the calculus of variations], Unione Matematica Italiana, Bologna, 1994.

\bibitem{HR} F. Hamel, E. Russ, Comparison results for semilinear elliptic equations using a new symmetrization method, preprint (2014), available at {\tt http://arxiv.org/abs/1401.1726}

\bibitem{HN} W. Hansen, N. Nadirashvili, Isoperimetric inequalities in potential theory, Potential Anal., {\bf 3} (1994), 1--14.

\bibitem{hen06}{A.~Henrot}, Extremum Problems for Eigenvalues of Elliptic Operators, Frontiers in Mathematics, Birkh\"auser Verlag, Basel (2006).

\bibitem{Ke} J. B. Keller, Lower Bounds and Isoperimetric Inequalities for Eigenvalues of the Schr\"odinger Equation, J. Math. Phys., {\bf 2} (1961), 262--266.   

\bibitem{Me} A. Melas, The stability of some eigenvalue estimates, J. Differential Geom., {\bf 36} (1992), 19-–33.

\bibitem{St} W. A. Strauss, Existence of Solitary Waves in Higher Dimensions, Commun. Math. Phys., {\bf 55} (1977), 149-162.

\bibitem{Ta2} G. Talenti, Elliptic equations and rearrangements, Ann. Sc. Norm. Sup. Pisa, {\bf 3}  (1976), 697--718.

\bibitem{Ta} G. Talenti, Best constant in Sobolev inequality, Ann. Mat. Pura Appl., {\bf 110} (1976), 353--372.

\bibitem{Va} J. L. Vazquez, Symmetrization and mass comparison for degenerate nonlinear parabolic and related elliptic equations, Adv. Nonlinear Stud., {\bf 5} (2005), 87--131. 

\end{thebibliography}
\end{document}